\theoremstyle{plain}
\newtheorem{algorithm}{Algorithm}[section]
\newtheorem{conjecture}[algorithm]{Conjecture}
\newtheorem{corollary}[algorithm]{Corollary}
\newtheorem{definition}[algorithm]{Definition}
\newtheorem{example}[algorithm]{Example}
\newtheorem{lemma}[algorithm]{Lemma}
\newtheorem{question}[algorithm]{Question}
\newtheorem{theorem} [algorithm] {Theorem}
\newtheorem{main} [algorithm] {Main Theorem}
\newtheorem{proposition}[algorithm]{Proposition}
\newtheorem{remark}[algorithm]{Remark}
\numberwithin{equation}{algorithm}
\newtheorem{CSL}[algorithm]{Chang Skjelbred Lemma}
\def\qqq{\mathbb{Q}}
\def\rrr{\mathbb{R}}
\def\ccc{\mathbb{C}}
\def\zzz{\mathbb{Z}}
\DeclareMathOperator{\Fix}{Fix}
\def\codim{\textrm{codim}}
\def\bdm{\begin{displaymath}}
\def\edm{\end{displaymath}}
\def\beq{\begin{equation}}
\def\eeq{\end{equation}}
\def\bes{\begin{equation*}}
\def\ees{\end{equation*}}
\def\epcm{\end{picture}\end{center}\end{minipage}}
\def\bpcm{\begin{minipage}{80pt}\begin{center}\begin{picture}}
\def\t2{T^2}
\def\f4{F_4}
\def\g2{G_2}
\def\mft{\mathfrak{t}}
\def\p2{\frac{\pi}{2}}
\def\Fix{\mathrm{Fix}}
\def\rk{\mathrm{rk}}
\def\dim{\mathrm{dim}}
 \numberwithin{equation}{section}
  \numberwithin{figure}{section}
\newtheorem*{ack}{Acknowledgements}
\begin{document}
\newcommand{\comment}[1]{\vspace{5 mm}\par \noindent
\marginpar{\textsc{Note}}
\framebox{\begin{minipage}[c]{0.95 \textwidth}
#1 \end{minipage}}\vspace{5 mm}\par}

\title[Odd-dimensional GKM-manifolds of non-negative curvature]{Odd-dimensional GKM-manifolds of non-negative curvature}

\author[Escher]{Christine Escher}
\address[Escher]{Department of Mathematics, Oregon State University, Corvallis, Oregon}
\email{tine@math.orst.edu}

\author[Goertsches]{Oliver Goertsches}
\address[Goertsches]{Fachbereich Mathematik und Informatik der Philipps-Universit\"at Marburg, Germany}
\email{goertsch@mathematik.uni-marburg.de}

\author[Searle]{Catherine Searle}
\address[Searle]{Department of Mathematics, Statistics, and Physics, Wichita State University, Wichita, Kansas}
\email{searle@math.wichita.edu}

\subjclass[2000]{Primary: 53C20; Secondary: 57S25} 

\date{\today}

\begin{abstract}

\bigskip

Let $M$ be a closed, odd GKM$_3$ manifold of non-negative sectional curvature. We show that in this situation one can associate an ordinary abstract GKM$_3$ graph to $M$ 
and prove that if this graph is orientable, then both the equivariant and the ordinary rational cohomology of $M$ split off the cohomology of an odd-dimensional sphere.
\end{abstract}

\maketitle

\section{Introduction}

A long-standing problem in Riemannian geometry is the classification of positively and non-negatively curved manifolds. One characteristic shared by many of the known examples is a high degree of symmetry. The
 Grove Symmetry Program suggests we attempt the classification of such manifolds with the additional hypothesis of 
 ``large" symmetries. The eventual goal of this program is to be able to eliminate the hypothesis of symmetries entirely.
 
 A natural first step is to consider the case of abelian symmetries. 
 For the case of positive curvature, results due to Grove and Searle \cite{GS}, Rong \cite{R} and Fang and  Rong \cite{FR}, and  Wilking \cite{Wi}  give us a classification up to  diffeomorphism, homeomorphism, or rational homotopy equivalence for a $T^k$-action, provided $k$ equals $\lfloor (n+1)/2 \rfloor$,  $\lfloor (n-1)/2 \rfloor$, or is greater than or equal to $\lfloor (n+4)/4 \rfloor$, respectively. For non-negative curvature,  an equivariant diffeomorphism classification for dimensions less than or equal to nine for $T^k$-actions with $k=\lfloor 2n/3\rfloor$ follows from work of  Galaz-Garc\'ia and  Searle \cite{GGS1},  Galaz-Garc\'ia and  Kerin \cite{GGK}, and  Escher and  Searle \cite{ES2}. A diffeomorphism classification for dimensions less than or equal to $6$ for $T^k$-actions with $k=\lfloor 2n/3\rfloor-1$ follows from work of Kleiner \cite{K} and Searle and Yang \cite{SY}, Galaz-Garc\'ia and Searle \cite{GGS2}, and Escher and  Searle \cite{ES1}. Note that all of these results rely heavily on the 
 existence of fixed point sets of ``small" codimension.
 
 From this point of view, one may consider GKM$_k$ manifolds as occupying the other end of the spectrum. Note that these are $2n$-dimensional manifolds with a torus action of rank $\leq n$.   A consequence of the GKM$_k$ condition is that for a $T^m$-action on $M^{2n}$, for all $l\leq \min(k, m)-1$, 
 there exist codimension $l$ torus subgroups of $T^m$ fixing $2l$-dimensional submanifolds of $M$, which we denote by $N^{2l}$. Furthermore, with the induced $T^l$ action, each $N^{2l}$ is a torus manifold, that is an orientable, even-dimensional manifold such that $T^l$ has non-empty fixed point set. GKM$_k$  manifolds of both positive and non-negative curvature were studied by Goertsches and Wiemeler in \cite{GW1, GW2}, respectively, where they showed that the GKM$_3$, respectively GKM$_4$, condition allows them to classify such manifolds up to real, respectively, rational cohomology type. The notion of GKM manifold was extended to odd dimensions by He \cite{H}. We  call such manifolds {\em odd GKM manifolds}. Inspired by the work of \cite{GW1, GW2} and \cite{H}, we consider the case of odd GKM$_3$ manifolds of  positive and non-negative curvature. 
 
 Our main result concerns odd GKM$_3$ manifolds of non-negative sectional curvature.   Throughout this article, we only consider cohomology with rational coefficients, and note that an  abstract GKM$_3$ graph  is {\em orientable} provided  its top cohomology class is non-trivial.

   \begin{main}\label{main} Let $M^{2n+1}$ be a closed, non-negatively curved odd GKM$_3$ manifold, $\bar{\Gamma}_M$ the GKM$_3$ graph of $M$, and $k$ the number of floating edges at a vertex of $\bar{\Gamma}_M$.  Suppose that $(\Gamma,\alpha,\nabla)$,
   the abstract, even-dimensional GKM$_3$ graph obtained from $\bar{\Gamma}_M$, is orientable. Then $H^*(M^{2n+1})$  splits off the cohomology ring of an odd dimensional sphere, that is,
  \[
H^*(M)\cong H^*(\Gamma,\alpha, \nabla)\otimes H^*(S^{2k+1}).
\]
 \end{main}

In the process of proving Theorem \ref{main}, we also obtain a similar result for the equivariant cohomology of $M$, see Theorem \ref{main1}.   For the definition of floating edges of a GKM graph of an odd-dimensional GKM manifold, see Definition \ref{edge}.  
 
 In Theorem \ref{main}, the abstract, even-dimensional GKM$_3$ graph, $\Gamma$, that we obtain from the odd GKM$_3$ graph associated to $M^{2n+1}$  has two-dimensional faces that contain at most $4$ vertices.
 The rational cohomology of a GKM$_k$ manifold is completely determined by its corresponding vertex-edge graph, $\Gamma$.   Unfortunately,  there is no general classification of even-dimensional GKM$_3$ graphs whose two dimensional faces contain at most $4$ vertices. That is,  it is as yet unknown whether every such GKM$_3$ graph corresponds to a closed, non-negatively curved GKM$_3$ manifold. 
  However, if there are no quadrangles as two-dimensional faces of $\Gamma$, that is, if no two-dimensional face in the odd-dimensional GKM graph of $M$ is of the form $(4)$ in Theorem \ref{graphs}, then by the main result of \cite{GW1}, $H^*(\Gamma,\alpha,\nabla)$ is isomorphic to the real cohomology ring of a compact rank one symmetric space (CROSS). Since the result in \cite{GW1} 
 was obtained via a classification of all possible GKM$_3$ graphs and by applying the GKM theorem, the result also holds for rational coefficients. We obtain the following theorem.

 \begin{theorem}\label{2}  Let $M^{2n+1}$ be a closed, non-negatively curved,  odd GKM$_3$ manifold.  
 Suppose that the two-dimensional faces in $\bar{\Gamma}_M$, the odd-dimensional GKM graph of $M$, are not of the form $(4)$ in Theorem \ref{graphs} and that $(\Gamma,\alpha,\nabla)$,
   the abstract, even-dimensional GKM$_3$ graph obtained from $\bar{\Gamma}_M$, is orientable. Then
  $H^*(M^{2n+1})$ is the tensor product of the rational cohomology ring of an odd dimensional sphere,  and a simply-connected CROSS, that is,
 \[
H^*(M)\cong H^*(N^{2n-2k})\otimes H^*(S^{2k+1}),
\]
where $N^{2n-2k}$ is a simply-connected CROSS.
 \end{theorem}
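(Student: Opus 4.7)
The plan is to combine the Main Theorem with the classification result for even-dimensional GKM$_3$ graphs from \cite{GW1}. Since the Main Theorem already provides the splitting
\[
H^*(M)\cong H^*(\Gamma,\alpha, \nabla)\otimes H^*(S^{2k+1}),
\]
the only remaining task is to identify the first tensor factor $H^*(\Gamma,\alpha,\nabla)$ with the rational cohomology ring of a simply-connected CROSS of the claimed dimension $2n-2k$.

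First I would verify that the hypotheses of the Main Theorem are satisfied: by assumption, $M^{2n+1}$ is closed and non-negatively curved, $\bar\Gamma_M$ is an odd GKM$_3$ graph, and the associated abstract even-dimensional GKM$_3$ graph $(\Gamma,\alpha,\nabla)$ is orientable. This immediately yields the splitting above, so the proof reduces to a purely combinatorial statement about $(\Gamma,\alpha,\nabla)$. Next, I would invoke the hypothesis that none of the two-dimensional faces of $\bar\Gamma_M$ is of the form $(4)$ in Theorem \ref{graphs}, and verify that this rules out quadrangles as two-dimensional faces of the associated even-dimensional graph $\Gamma$ as well. This is the key combinatorial check: one must track how the passage from $\bar\Gamma_M$ to $\Gamma$ treats two-dimensional faces, and confirm that the forbidden face type in the odd graph corresponds precisely to quadrangles in $\Gamma$. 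Given the explicit construction of $\Gamma$ from $\bar\Gamma_M$ already used to prove the Main Theorem, this should be a routine verification.

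With this reduction in hand, I would apply the main classification result of \cite{GW1}, which asserts that an (abstract) even-dimensional GKM$_3$ graph whose two-dimensional faces contain at most $4$ vertices but are not quadrangles has real, and hence rational, equivariant/ordinary cohomology isomorphic to that of a simply-connected compact rank one symmetric space. As remarked by the authors, the argument in \cite{GW1} proceeds via a classification of all such graphs followed by an application of the GKM theorem, so it automatically applies with rational coefficients. This identifies $H^*(\Gamma,\alpha,\nabla)\cong H^*(N^{2n-2k})$ for some simply-connected CROSS $N^{2n-2k}$; the dimension $2n-2k$ is forced by a rank/dimension count, since the even-dimensional graph $\Gamma$ has $2n-2k$ as its associated dimension after removing the contributions of the $k$ floating edges that produce the $S^{2k+1}$ factor.

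Substituting this identification into the splitting from the Main Theorem yields
\[
H^*(M)\cong H^*(N^{2n-2k})\otimes H^*(S^{2k+1}),
\]
as required. The main obstacle, if there is one, is making precise the bookkeeping between the forbidden face type $(4)$ in the odd graph $\bar\Gamma_M$ and the exclusion of quadrangles in the associated even graph $\Gamma$; once this correspondence is set up, the theorem follows immediately from the Main Theorem combined with \cite{GW1}.
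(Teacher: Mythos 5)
Your proposal is correct and follows essentially the same route as the paper: apply the Main Theorem (Theorem \ref{main1}) to split off $H^*(S^{2k+1})$, observe that excluding faces of type $(4)$ in $\bar\Gamma_M$ forces the two-dimensional faces of $\Gamma$ to be biangles or triangles, and then identify $H^*(\Gamma,\alpha,\nabla)$ with the rational cohomology of a simply-connected CROSS via the graph classification underlying the proof of Theorem \ref{GKM3+} in \cite{GW1}. The ``key combinatorial check'' you flag is exactly what the paper reads off from the construction of $\Gamma$ illustrated in Example \ref{examples}, so nothing further is needed.
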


 Using Theorem 1.4 of \cite{GW2}, we see that if we assume that our manifold is GKM$_4$, then the cohomology ring of the manifold splits as that of an odd-dimensional sphere and a finite quotient of a non-negatively curved torus manifold.
 \begin{theorem}\label{3} Let $M^{2n+1}$ be a closed,  non-negatively curved odd GKM$_4$ manifold. Suppose that  $(\Gamma,\alpha,\nabla)$,
   the abstract, even-dimensional GKM$_3$ graph obtained from $\bar{\Gamma}_M$, the odd-dimensional GKM graph of $M$, is orientable. Then 
  $H^*(M^{2n+1})$ is the tensor product of the cohomology ring of an odd dimensional sphere,  and the cohomology ring of a (quotient of) a torus manifold, that is
 \[
H^*(M)\cong H^*(N^{2n-2k}/G)\otimes H^*(S^{2k+1}),
\]
where $N$ is  a simply-connected, non-negatively curved torus manifold and $G$ is a finite group acting isometrically (and orientably) on $N$.
 \end{theorem}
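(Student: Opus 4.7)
The plan is to combine the Main Theorem with Theorem 1.4 of \cite{GW2}. Since every GKM$_{4}$ manifold is in particular GKM$_{3}$, the hypotheses of the Main Theorem are satisfied, so it immediately yields the splitting
\[
H^{*}(M)\cong H^{*}(\Gamma,\alpha,\nabla)\otimes H^{*}(S^{2k+1}),
\]
where $k$ is the number of floating edges at a vertex of $\bar{\Gamma}_{M}$ and the even-dimensional abstract graph $(\Gamma,\alpha,\nabla)$ has associated dimension $2n-2k$. The remaining task is to identify $H^{*}(\Gamma,\alpha,\nabla)$ with the cohomology of a (quotient of a) torus manifold.

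The main technical step, and the place I expect the real work to lie, is to verify that the construction of the even-dimensional abstract graph $(\Gamma,\alpha,\nabla)$ from the odd-dimensional graph $\bar{\Gamma}_{M}$ (as used in the proof of the Main Theorem) promotes the GKM$_{3}$ property to GKM$_{4}$ whenever $M$ itself is GKM$_{4}$. Concretely, one must check that any four weights on edges emanating from a common vertex of $\Gamma$ remain in general position. These weights arise as restrictions of the isotropy weights at a fixed point of the torus action on $M$ modulo the distinguished directions carved out by the floating edges; the rank-$4$ independence of the original isotropy weights should survive this quotient, because the floating directions are, by construction, independent of the non-floating ones in the weight lattice. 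Verifying this carefully in the combinatorial setup of $\bar{\Gamma}_M$ and $(\Gamma,\alpha,\nabla)$ is the crux of the argument.

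Once the GKM$_{4}$ property of $(\Gamma,\alpha,\nabla)$ has been established, I would apply Theorem 1.4 of \cite{GW2} to the abstract graph, concluding that
\[
H^{*}(\Gamma,\alpha,\nabla)\cong H^{*}(N^{2n-2k}/G),
\]
where $N^{2n-2k}$ is a simply-connected, non-negatively curved torus manifold and $G$ is a finite group acting isometrically and orientably on $N$. Substituting this isomorphism into the splitting from the Main Theorem produces
\[
H^{*}(M)\cong H^{*}(N^{2n-2k}/G)\otimes H^{*}(S^{2k+1}),
\]
which is exactly the statement of the theorem. Apart from the promotion of the GKM$_{3}$ property of the abstract graph to GKM$_{4}$, the argument is a direct concatenation of the Main Theorem with the cited result from \cite{GW2}.
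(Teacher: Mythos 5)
Your overall strategy---split off the sphere via the Main Theorem and then identify $H^*(\Gamma,\alpha,\nabla)$ using Theorem 1.4 of \cite{GW2}---matches the paper, but there is a genuine gap in the step where you ``apply Theorem 1.4 of \cite{GW2} to the abstract graph.'' That theorem is a statement about non-negatively curved GKM$_4$ \emph{manifolds}, and the abstract graph $(\Gamma,\alpha,\nabla)$ is not known to be the GKM graph of any such manifold (the paper stresses exactly this point: it is unknown whether every such abstract GKM$_3$/GKM$_4$ graph is realized by a closed, non-negatively curved manifold). What makes the transfer legitimate is that the proof in \cite{GW2} is a graph-theoretic classification, and it runs provided the graph has \emph{small three-dimensional faces} (Definition 3.5 of \cite{GW2}). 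Verifying this combinatorial hypothesis for $\Gamma$ is the actual content of the proof of Theorem \ref{3}: by Theorem \ref{graphs}, non-negative curvature forces every two-dimensional face of $\bar{\Gamma}_M$ to have at most $4$ circle vertices, hence every two-dimensional face of $\Gamma$ has at most $4$ vertices, and, as noted in \cite{GW2}, a GKM$_4$ graph with this property has small three-dimensional faces. Your proposal never checks this face condition, so the invocation of Theorem 1.4 at the graph level is unjustified as written.

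Conversely, the step you single out as the crux---promoting the GKM$_3$ property of $\Gamma$ to GKM$_4$---is essentially immediate and your description of it is not accurate. The construction of $\Gamma$ from $\bar{\Gamma}_M$ does not restrict or quotient the weights by the ``floating directions'': the torus is unchanged, the floating edges are simply deleted, and each surviving edge $e=\pi(s)$ carries the original weight $\alpha(e)=\bar{\alpha}_M(s)$. Hence the edges at a vertex $\pi(c)$ of $\Gamma$ carry a subset of the isotropy weights at the fixed circle $c$, and the odd GKM$_4$ condition (any four isotropy weights at $c$ are linearly independent) is inherited verbatim, exactly as $3$-independence was in Lemma \ref{lem:GammaisaGKMgraph}. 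So the real work is not where you place it; it is the small-three-dimensional-faces verification described above, which is where the non-negative curvature hypothesis enters after the Main Theorem has been applied.
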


To date, only odd GKM graphs without signs have been treated in the literature. Adding the restriction that the manifold admit a $T$-invariant almost contact structure, that is, an almost contact structure invariant under a torus action, allows us to talk about odd GKM graphs with signs.
Using Theorem 1.5 of \cite{GW2}, we obtain the following result.

\begin{theorem}\label{AC} Let $M^{2n+1}$ be a closed,  non-negatively manifold with an odd  GKM$_4$ $T$-action which admits a $T$-invariant, alternating, almost contact structure. Suppose that  $(\Gamma,\alpha,\nabla)$,
   the abstract, even-dimensional GKM$_4$ graph obtained from $\bar{\Gamma}_M$, the odd-dimensional GKM graph of $M$, is orientable. Then the rational cohomology ring of $M$ is isomorphic to the tensor product of the rational cohomology ring of an odd-dimensional sphere and the rational cohomology ring of a generalized Bott manifold.
\end{theorem}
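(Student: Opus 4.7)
The strategy is the same two-step one used for Theorems~\ref{2} and~\ref{3}: split off the sphere factor using the Main Theorem, then identify the cohomology of the residual even-dimensional graph using the appropriate result from \cite{GW2}, which in the present context is Theorem~1.5 rather than Theorem~1.4. Since the GKM$_4$ hypothesis implies the GKM$_3$ hypothesis and $(\Gamma,\alpha,\nabla)$ is assumed orientable, the Main Theorem applies and produces the splitting
\[
H^*(M)\cong H^*(\Gamma,\alpha,\nabla)\otimes H^*(S^{2k+1}).
\]
The remaining task is therefore purely combinatorial: identify $H^*(\Gamma,\alpha,\nabla)$ with the rational cohomology ring of a generalized Bott manifold.

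To do this, I would use the $T$-invariant alternating almost contact structure on $M$ to promote $\bar{\Gamma}_M$ to a signed odd GKM$_4$ graph, and then transport this sign structure to the abstract even-dimensional graph $(\Gamma,\alpha,\nabla)$ obtained by deleting the floating edges. The almost contact structure endows each $T$-invariant $2$-sphere corresponding to a non-floating edge with a $T$-equivariant complex structure on its normal bundle, and therefore with a sign; the alternating hypothesis is precisely what ensures that these edge signs patch together coherently along each $2$-face so as to yield a signed GKM$_4$ graph in the sense of \cite{GW2}. With these signs in hand, and with the non-negative curvature assumption ensuring that the combinatorics of $\bar{\Gamma}_M$ (and hence of $\Gamma$) is that of a non-negatively curved GKM$_4$ graph, Theorem~1.5 of \cite{GW2} applies and identifies $H^*(\Gamma,\alpha,\nabla)$ with $H^*(B)$ for some generalized Bott manifold $B$. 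Combining this with the splitting above gives the claim.

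The main obstacle I expect is the second step: one must verify that (i) the $T$-invariant alternating almost contact structure on $M$ really does produce an admissible signed structure on every $2$-face of $\bar{\Gamma}_M$, with the ``alternating'' hypothesis ruling out the sign conflicts that could otherwise obstruct the patching, and (ii) this signed structure is inherited by the abstract even-dimensional graph $(\Gamma,\alpha,\nabla)$ obtained after deleting the floating edges associated with the contact direction, so that the hypotheses of Theorem~1.5 of \cite{GW2} are literally met. Once this sign-transfer step is carried out, invoking Theorem~1.5 of \cite{GW2} is essentially a black box, and the rest of the argument reduces to routine bookkeeping already present in the proofs of Theorems~\ref{2} and~\ref{3}.
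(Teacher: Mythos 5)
Your overall strategy coincides with the paper's: first split off the odd sphere via Theorem \ref{main1} (using that GKM$_4$ implies GKM$_3$ and the orientability of $\Gamma$), then use the alternating almost contact structure to make the graph signed and appeal to the machinery of \cite{GW2} to identify $H^*(\Gamma,\alpha,\nabla)$ with the cohomology of a generalized Bott manifold. The gap is in how you close the second step. Theorem 1.5 of \cite{GW2} is a statement about non-negatively curved GKM$_4$ \emph{manifolds} carrying an invariant almost complex structure; it is not a statement about abstract GKM graphs, and it is not known whether the abstract graph $(\Gamma,\alpha,\nabla)$ produced from $\bar{\Gamma}_M$ is realized by any such manifold. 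So even after the sign transfer, its hypotheses cannot be ``literally met'' by $\Gamma$, and invoking it as a black box does not work; one has to rerun the graph-level argument of Section 7 of \cite{GW2}, which is exactly what the paper does.

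Concretely, the paper's route is: the alternating hypothesis, via Proposition \ref{6.3} and Definition \ref{alt/bal}, immediately yields a signed abstract GKM graph (this is the part you flag as the main obstacle, but it is the easy step — in the odd setting the signed weights come from the complex structure on $\ker\eta$ at the fixed circles, not from normal bundles of invariant $2$-spheres, which do not exist here); the real use of signedness is Corollary \ref{cor5.6} (from Lemma 5.6 of \cite{GW1}): a signed abstract GKM$_3$ graph has no biangles, hence no maximal simplices of the combinatorial type of $\Sigma^k$, and this exclusion is what upgrades the conclusion from ``quotient of a torus manifold'' (Theorem \ref{3}) to ``generalized Bott manifold.'' Then non-negative curvature plus GKM$_4$ gives small three-dimensional faces (the two-dimensional faces have at most four vertices by Theorem \ref{graphs}), Theorem 3.11 of \cite{GW2} produces a finite cover $\tilde{\Gamma}$ of $\Gamma$ by the vertex-edge graph of a product of simplices, Theorem 7.1 of \cite{GW2} equips the corresponding quasitoric manifold with an invariant complex structure, Theorem 6.4 of \cite{CMS} identifies it as a generalized Bott manifold, and Theorem 7.5 of \cite{GW2} shows $\tilde{\Gamma}=\Gamma$, after which the GKM theorem and the splitting give the statement. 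None of this graph-level chain — in particular the biangle/$\Sigma^k$ exclusion, where the alternating hypothesis actually earns its keep — appears in your plan, so as written the identification of $H^*(\Gamma,\alpha,\nabla)$ is not established.
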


See Definition \ref{alt/bal} for a definition of an {\em alternating} almost contact structure. 
Finally, 
 we also obtain a full rational cohomology classification for positively curved odd GKM$_3$ manifolds, as follows.
 \begin{theorem}\label{positive} Let $M^{2n+1}$ be a closed, positively curved, odd GKM$_3$ manifold, then $M^{2n+1}$ has the rational cohomology ring of $S^{2n+1}$.
\end{theorem}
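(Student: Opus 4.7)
The natural route is to reduce to Theorem \ref{2} and then sharpen the conclusion using positive curvature. First I would verify the two hypotheses of Theorem \ref{2} in the positive curvature setting. No 2-face of $\bar\Gamma_M$ can be of type (4) (a quadrangle): such a face would correspond to a totally geodesic $S^2\times S^2$-type factor inside a $T$-invariant submanifold of $M$, which is incompatible with positive sectional curvature. The orientability of $(\Gamma,\alpha,\nabla)$ should follow from the orientability of $M^{2n+1}$ itself (closed odd-dimensional positively curved manifolds are orientable by Synge's theorem), via the $T$-equivariant functoriality of the graph construction; at worst one passes to the oriented double cover, which inherits both the positive curvature and the odd GKM$_3$ structure.

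Applying Theorem \ref{2} then yields
\[
H^*(M)\cong H^*(N^{2n-2k})\otimes H^*(S^{2k+1}),
\]
where $N$ is a simply-connected CROSS. The remaining task, and the heart of the proof, is to show that $N$ is rationally an even-dimensional sphere $S^{2n-2k}$, equivalently, to rule out $N = \mathbb{CP}^{n-k}$, $\mathbb{HP}^{(n-k)/2}$, or $\mathbb{OP}^2$.

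My plan for this final step is to exploit the fact that each non-spherical CROSS has a distinguished combinatorial footprint in $\Gamma$: a generator of $H^2(N)$ or $H^4(N)$ corresponding to a specific arrangement of edges and 2-faces in the underlying even-dimensional graph. I would show that these footprints force 2-face types in $\bar\Gamma_M$ that are themselves incompatible with positive curvature on $M$, in much the same way that the quadrangle is excluded. Combining this with the classification of positively curved GKM$_3$ graphs in \cite{GW1}, I would conclude that the only even-dimensional CROSS $N$ compatible with an odd GKM$_3$ positively curved $M$ is a sphere, and hence $H^*(M)\cong H^*(S^{2n-2k})\otimes H^*(S^{2k+1})\cong H^*(S^{2n+1})$ once one checks that the sphere product here collapses (which it does since the even factor must in fact be trivial; compare the dimension count $2n+1=(2n-2k)+(2k+1)$ forces the even sphere to be a point via the graph analysis).

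The main obstacle is this last step: while the main results extract the $S^{2k+1}$ factor cleanly from the floating edges, pinning down the CROSS factor $N$ as a point (rational sphere of dimension zero, collapsing the even factor) requires identifying and excluding the precise combinatorial patterns in $\bar\Gamma_M$ that would give rise to $\mathbb{CP}^{n-k}$, $\mathbb{HP}^{(n-k)/2}$, or $\mathbb{OP}^2$-type contributions, as well as higher-dimensional sphere contributions. This amounts to a case analysis analogous to, but more delicate than, the one in \cite{GW1}, leveraging the finer geometric constraints imposed by positive sectional curvature on the odd-dimensional $M$.
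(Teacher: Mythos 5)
Your proposal has a genuine gap, and it also misses the single observation that makes this theorem easy. The unexecuted step you flag as ``the heart of the proof''---ruling out the non-spherical CROSSes and, more importantly, collapsing the even-dimensional factor $N^{2n-2k}$ to a point---is exactly what is left unproved; the dimension count $2n+1=(2n-2k)+(2k+1)$ holds for every $k$ and forces nothing, so ``the graph analysis'' you defer is the entire content. The paper does not go through Theorem \ref{2} at all. It uses Proposition \ref{valence1}: in positive curvature the $3$-dimensional submanifolds corresponding to squares are themselves positively curved, hence (by Hamilton's classification) spherical space forms, which are rational homology spheres containing exactly one fixed circle; so \emph{every} square of $\bar\Gamma_M$ has valence one, i.e.\ every edge is floating. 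Connectivity of the graph (Proposition \ref{props}) then forces a single circle vertex, so $\bar\Gamma_M$ is the pinwheel of Example \ref{ex:pinwheel}, $\Gamma$ is a single vertex, and $k=n$. A one-vertex graph is trivially orientable, so Theorem \ref{main1} gives $H^*(M)\cong H^*(\Gamma,\alpha)\otimes H^*(S^{2n+1})\cong H^*(S^{2n+1})$ immediately, with no case analysis of $\mathbb{C}P^{n-k}$, $\mathbb{H}P^{(n-k)/2}$, or $\mathbb{O}P^2$ needed.

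Two further points in your verification of the hypotheses of Theorem \ref{2} are problematic. First, excluding only the quadrangle face is not enough for your strategy, and your justification (``a totally geodesic $S^2\times S^2$-type factor'') is not how the exclusion works; the curvature restriction enters through the $3$-dimensional fixed point components, not through an embedded product. Second, your claim that orientability of $(\Gamma,\alpha,\nabla)$ ``follows from the orientability of $M^{2n+1}$'' is false in general: the example in Subsection \ref{ss5} is a closed orientable non-negatively curved odd GKM$_4$ manifold whose associated graph, a quotient of the $4$-cube by $\mathbb{Z}_2$, is not orientable. In the positively curved case orientability of $\Gamma$ holds for the trivial reason that $\Gamma$ is a single vertex, not by functoriality from the orientation of $M$; passing to the oriented double cover does not address this, since the issue is the quotient graph, not the manifold.
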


We note that of the known examples of odd-dimensional manifolds of positive curvature, the only ones that are rational homology spheres, but not diffeomorphic to spheres, are the so-called Berger spaces,  $B^7=SO(5)/SO(3)$ and $B^{13}=SU(5)/(Sp(2)\times_{\mathbb{Z}_2} S^1)$. These two manifolds admit both an invariant  transitive and an invariant cohomogeneity one action, but, the maximal torus of both actions does not satisfy the requirements to be odd GKM$_3$, since the maximal torus of the corresponding $G$-action in either case does not have fix points.
 
Finally, we point out that the positive and non-negative curvature hypotheses in our results serve to restrict the number of vertices in the $2$-dimensional faces of  the odd GKM$_k$ graph associated to the manifold. Thus, all of the above theorems can be reframed in a curvature-free setting by simply assuming the corresponding restrictions on these graphs.

\subsection{Organization}
The paper is organized as follows. We include basic notation and preliminary material in Section \ref{s2}. In Section \ref{s3}, we classify the universal covers of closed, non-negatively curved $5$-dimensional GKM manifolds. 
In Section \ref{s4}, we classify the corresponding graphs of these $5$-manifolds. In Section \ref{s5}, we prove Theorems \ref{main}, \ref{2}, \ref{3}, and \ref{positive}. In Section \ref{s6}, we give the proof of Theorem \ref{AC}.

\begin{ack} We are grateful to a referee for bringing to our attention  the example in Subsection
\ref{ss5}. The first and third authors are grateful to the Max Planck Institute for Mathematics in Bonn for its hospitality and support during the summer of 2017, where this work was initiated. C. Escher gratefully acknowledges partial support  from the Simons Foundation  (\#585481, C. Escher).
C. Searle gratefully acknowledges partial support  from grants from the National Science Foundation (\#DMS-1611780 and \#DMS-1906404), as well as  from the Simons Foundation (\#355508, C. Searle).

\end{ack}
\section{Preliminaries}\label{s2}

In this section we  gather basic results and facts about transformation groups, equivariant cohomology, even- and odd-dimensional GKM and GKM$_k$ theory, as well as results concerning $G$-invariant manifolds of non-negative sectional curvature.

\subsection{Transformation Groups}

Let $G$ be a compact Lie group acting on a smooth manifold $M$. We denote by $G_x=\{g\in G \mid gx=x\}$ the \emph{isotropy group} at $x\in M$ and by $G(x)=\{gx \mid g\in G\}$ the \emph{orbit} of $x$.  Note that $G(x)$ is homeomorphic to  $G/G_x$ since $G$ is compact.   We denote the orbit space of the $G$-action by $M/G$ and note that if $M$ admits a lower sectional curvature bound and the $G$-action is isometric, then $M/G$ is an Alexandrov space admitting the same lower curvature bound. We  denote the fixed point set of $M$ by $G$  as either 
$M^G$ or $\Fix(M; G)$, using whichever may be more convenient.

One measurement for the size of a transformation group $G\times M\rightarrow M$ is the dimension of its orbit space $M/G$, also called the {\it cohomogeneity} of the action. This dimension is clearly constrained by the dimension of the fixed point set $M^G$  of $G$ in $M$. In fact, $\dim (M/G)\geq \dim(M^G) +1$ for any non-trivial, non-transitive action. In light of this, the {\it fixed point cohomogeneity} of an action, denoted by $\textrm{cohomfix}(M;G)$, is defined by
\[
\textrm{cohomfix}(M; G) = \dim(M/G) - \dim(M^G) -1\geq 0.
\]
A manifold with fixed point cohomogeneity $0$ is also called a {\it $G$-fixed point homogeneous manifold}.

We now recall Theorem I.9.1  of Bredon \cite{Br} which characterizes how to lift a group action to a covering space.

\begin{theorem}\cite{Br}\label{lift}
 Let $G$ be a connected Lie group acting effectively on a connected, locally path-connected space $X$ and let $X'$ be any covering space of $X$. Then there is a covering group $G'$ of $G$ with an effective action of $G'$ on $X'$ covering the given action. Moreover, $G'$ and its action on $X'$ are unique.
 
The kernel of $G'\rightarrow G$ is a subgroup of the group of deck transformations of $X'\rightarrow X$. In particular, if $X'\rightarrow X$
 has finitely many sheets, then so does $G'\rightarrow G$.
If $G$ has a fixed point in $X$, then $G' = G$ and $\Fix(X'; G)$ is the
full inverse image of $\Fix(X; G)$.
\end{theorem}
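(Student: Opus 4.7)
The plan is to construct $G'$ explicitly as a connected covering of $G$ whose fundamental group is tailored to the action, and then to produce the lifted action via the covering-space lifting criterion. Fix basepoints $x_0\in X$, $x_0'\in X'$ with $p(x_0')=x_0$ (where $p\colon X'\to X$ denotes the given covering), and $e\in G$. The orbit map $\varphi\colon G\to X$, $g\mapsto gx_0$, is continuous and induces $\varphi_*\colon \pi_1(G,e)\to \pi_1(X,x_0)$. Let $H=p_*(\pi_1(X',x_0'))$, and define $G'$ to be the connected covering of $G$ corresponding to the subgroup $\varphi_*^{-1}(H)\subseteq \pi_1(G,e)$, with distinguished basepoint $e'$ over $e$. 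Because $G$ is a connected Lie group, $G'$ inherits a unique Lie group structure making $q\colon G'\to G$ a covering homomorphism with $q(e')=e$.

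Now consider the continuous map $\Psi\colon G'\times X'\to X$ defined by $\Psi(g',x')=q(g')\cdot p(x')$. The choice of $G'$ was engineered so that $\Psi_*\bigl(\pi_1(G'\times X',(e',x_0'))\bigr)\subseteq H$: the contribution from the $X'$-factor lies in $H$ tautologically, while the contribution from the $G'$-factor equals $\varphi_*q_*(\pi_1(G',e'))\subseteq H$ by construction. By the standard lifting criterion for $p$, there is a unique continuous lift $\tilde\Psi\colon G'\times X'\to X'$ with $\tilde\Psi(e',x_0')=x_0'$. Set $g'\cdot x':=\tilde\Psi(g',x')$; this is the candidate for the $G'$-action.

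That $\tilde\Psi$ really is a (smooth, effective) group action, and that the covering group $G'$ is unique, both reduce to repeated use of lifting uniqueness. The identity axiom $e'\cdot x'=x'$ and associativity $(g_1'g_2')\cdot x'=g_1'\cdot(g_2'\cdot x')$ each compare two lifts of the same continuous map to $X$ that agree at a single point, so they coincide on all of $G'\times X'$ (respectively $G'\times G'\times X'$). Smoothness follows from the fact that lifts of smooth maps to a covering of a manifold are automatically smooth. Effectiveness of the $G'$-action follows from effectiveness of the $G$-action together with lifting uniqueness: any $g'$ acting trivially on $X'$ projects to an element of $G$ acting trivially on $X$, hence to $e$, and then the trivial action and the identity $\tilde\Psi(e',\cdot)=\mathrm{id}$ agree at $x_0'$, forcing $g'=e'$. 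Uniqueness of $(G',\tilde\Psi)$ is similar: any competing covering homomorphism $G''\to G$ with a compatible action must satisfy the same fundamental-group constraint and then agree with $\tilde\Psi$ by lifting uniqueness.

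The remaining assertions follow quickly. Every element of $\ker q$ covers the identity of $G$, hence acts on $X'$ as a deck transformation of $p$; this yields the inclusion into the deck transformation group, and the finite-sheet statement is immediate since a subgroup of a finite group is finite. If $x_0\in X^G$, then $\varphi$ is constant, so $\varphi_*$ is trivial and $\varphi_*^{-1}(H)=\pi_1(G,e)$, giving $G'=G$; then $p^{-1}(X^G)$ is $G$-invariant, every point in it is fixed (because $G$ acts through deck transformations over the fixed set, which must reduce to the identity once a point in the fiber is fixed), and conversely every $G$-fixed point of $X'$ projects into $X^G$, yielding $\Fix(X';G)=p^{-1}(\Fix(X;G))$. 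The main obstacle is ensuring compatibility between the fundamental-group recipe for $G'$ and its inherited Lie group structure, so that $\tilde\Psi$ is a genuine smooth Lie group action rather than merely a continuous map; this is handled by the standard theorem that every connected covering of a connected Lie group carries a unique Lie group structure making the covering a homomorphism, after which all compatibilities follow from lifting uniqueness.
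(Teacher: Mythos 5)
Note first that the paper itself gives no proof of this statement; it is quoted verbatim from Bredon (Theorem I.9.1), so your proposal can only be measured against the standard argument, and your construction does follow it: taking $G'$ to be the covering of $G$ classified by $\varphi_*^{-1}(H)$ and lifting $\Psi(g',x')=q(g')\cdot p(x')$ is exactly the classical route. The lifting-criterion computation, the identity/associativity checks by uniqueness of lifts, and the derivation of the fixed-point case from the triviality of $\varphi_*$ are all fine (you do implicitly assume $X'$ connected, which is harmless for the paper's application to universal covers, and the remark about smoothness is vacuous since $X$ is only a topological space).

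There is, however, a genuine gap in your verification of effectiveness, and it matters because effectiveness is where the specific choice $K=\varphi_*^{-1}(H)$ must enter. From ``$g'$ acts trivially'' you correctly get $q(g')=e$, i.e.\ $g'\in\ker q$; but the next clause compares $\tilde\Psi(g',\cdot)$ with $\tilde\Psi(e',\cdot)$, both of which are the identity map, so agreement at $x_0'$ cannot force $g'=e'$. Indeed your argument never uses the definition of $G'$: applied verbatim to the universal cover $\tilde G$ it would ``prove'' that $\tilde G$ acts effectively on $X'$, which is false in general (take $G=X=X'=S^1$ with the rotation action; $\tilde G=\mathbb{R}$ acts with kernel $\mathbb{Z}$). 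The missing step is that nontrivial elements of $\ker q$ move $x_0'$: if $g'\in\ker q$ corresponds to $[\gamma]\in\pi_1(G,e)$ with $[\gamma]\notin\varphi_*^{-1}(H)$, lift $\gamma$ to a path $\gamma'$ in $G'$ from $e'$ to $g'$; then $t\mapsto\tilde\Psi(\gamma'(t),x_0')$ is the lift of $\varphi\circ\gamma$ starting at $x_0'$, and its endpoint $g'\cdot x_0'$ equals $x_0'$ if and only if $\varphi_*[\gamma]\in H$. So minimality of $G'$ is precisely what yields effectiveness, and the same point is glossed in your uniqueness sketch: a compatible covering action of some $G''\to G$ only gives the inclusion $q''_*\pi_1(G'')\subseteq\varphi_*^{-1}(H)$ directly, and you need \emph{effectiveness} of the competing action to rule out $G''$ strictly larger than $G'$ before lifting uniqueness can identify the two actions. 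Finally, for the statement that every $x'\in p^{-1}(x)$ with $x\in X^G$ is fixed, the clean argument is that $g\mapsto g\cdot x'$ is a continuous map from the connected group $G$ into the discrete fiber $p^{-1}(x)$, hence constant equal to $x'$; your appeal to ``deck transformations over the fixed set'' is shakier, since $X^G$ need not be connected or locally path-connected.
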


The presence of a $G$-action on a manifold $M$ induces a topological stratification of $M$. In particular, when the group is a torus, that is, $G=T$, 
 a special name is given to the strata of the $T$-action.
\begin{definition}[{\bf The k-skeleton of ${\bf M}$}] For $G=T$, a torus, we define the {\em k-skeleton of M} to be 
 $$M_k = \{p\in M\mid \dim(T(p))\leq k\}.$$  We then obtain a $T$-invariant topological stratification of $M$ as follows: $M_0\subset M_1\subset \cdots
\subset M_{\dim(T)} = M$ on $M$, where the $0$-skeleton $M_0$ is exactly the fixed point set $M^T$.
\end{definition}
\subsection{Equivariant cohomology}

We begin by providing some basic information about  equivariant cohomology and equivariantly formal manifolds for torus actions.
\begin{definition}[{\bf Equivariant Cohomology}]
Given an action of a torus $T$ on a compact manifold $M$, the \emph{equivariant cohomology} of the action is defined as
\[
H^*_T(M) = H^*(M\times_T ET),
\]
where $ET\to BT$ is the classifying bundle of $T$ and $ET$ is a contractible space on which $T$ acts freely.
\end{definition}
The equivariant cohomology has the natural structure of an $H^*(BT)$-algebra, via the projection $M\times_T ET \to BT$. Note that $H^*(BT)$ is isomorphic to the ring of rational polynomials on the Lie algebra $\mft$, in the following sense. Denoting the rational points in $\mft$, that is, the tensor product of the integer lattice in $\mft$ with $\qqq$, by ${{\mathfrak{t}}_{\mathbb{Q}}}$, then $H^*(BT)$, a rational polynomial ring in $\dim(T)$ variables, is isomorphic to  $S({{\mathfrak{t}}_{\mathbb{Q}}^*})$, the symmetric algebra over  ${\mathfrak{t}}_{\mathbb{Q}}^*$.

Given an action of a torus $T$ on $M$, we may compare  $H^*_T(M)$ with
$H^*_T(M^T)$ using the Borel Localization Theorem (see, for example, Corollary 3.1.8 in Allday and Puppe \cite{AP}).

\begin{theorem} [Borel Localization Theorem] \label{BLT} The restriction map
$$H^*_T(M)\rightarrow H^*_T(M^T)$$
is an $H^*(BT)$-module isomorphism modulo $H^*(BT)$-torsion.
\end{theorem}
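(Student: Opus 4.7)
\emph{Proof plan.} The plan is to deduce the theorem from the following key torsion statement: if $X$ is a compact $T$-space with $X^T = \emptyset$, then $H^*_T(X)$ is an $H^*(BT)$-torsion module. Once this is granted, apply it to a $T$-invariant open tubular neighborhood $U$ of $M^T$ minus $M^T$ itself, which by excision and the deformation retraction of $U \setminus M^T$ is equivalent to computing $H^*_T(M,M^T)$ up to torsion. The long exact sequence of the pair
\[
\cdots \to H^k_T(M,M^T) \to H^k_T(M) \to H^k_T(M^T) \to H^{k+1}_T(M,M^T) \to \cdots
\]
then becomes an isomorphism after localizing at the multiplicative set $S = H^*(BT)\setminus\{0\}$, which is precisely the claimed statement modulo $H^*(BT)$-torsion.

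The first step toward the torsion statement is the single orbit case. For a proper closed subgroup $H \subsetneq T$, the Borel fibration gives $H^*_T(T/H) \cong H^*(BH)$ as an $H^*(BT)$-algebra via the restriction map $H^*(BT) \to H^*(BH)$ induced by $H \hookrightarrow T$. Since $\dim \mathfrak{h} < \dim \mathfrak{t}$, there exists a nonzero $\xi \in \mathfrak{t}_{\mathbb{Q}}^*$ vanishing on $\mathfrak{h}$; the associated degree-two element of $H^*(BT) = S(\mathfrak{t}_{\mathbb{Q}}^*)$ lies in the kernel of the restriction map and hence annihilates $H^*_T(T/H)$. Thus $H^*_T$ of any single non-fixed orbit is $H^*(BT)$-torsion.

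Next I would globalize. By the slice theorem (using that $M$ is a compact $T$-manifold), there are only finitely many orbit types $(H_1),\ldots,(H_r)$ on $M$, all with $H_i \subsetneq T$ on $M\setminus M^T$. For each $i$ choose $\xi_i \in H^*(BT)$ nonzero and annihilating $H^*(BH_i)$ as above. I would then cover $M \setminus M^T$ by a finite $T$-invariant good cover in which each open set equivariantly deformation retracts onto an orbit of type $(H_i)$, and run the associated Mayer--Vietoris spectral sequence in equivariant cohomology. Each $E_1$ column is a direct sum of modules annihilated by some $\xi_i$, so after finitely many extension steps $H^*_T(M\setminus M^T)$ is annihilated by a product $\prod_i \xi_i^{N}$ and is therefore torsion.

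The main obstacle will be handling the passage from single orbits to the stratified space cleanly; in particular, one must ensure the finite cover or orbit-type stratification interacts correctly with equivariant cohomology (slice theorem, excision, and invariance under equivariant homotopy are the needed inputs). Once this is organized, combining the torsion result with the pair sequence and localizing at $S$ yields the Borel Localization Theorem without further difficulty.
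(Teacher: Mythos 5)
The paper does not prove this statement at all: it is quoted as a standard result, with an explicit pointer to Corollary 3.1.8 of Allday--Puppe \cite{AP}, so there is no internal proof to compare yours against. Your outline is, in fact, the standard textbook proof of Borel localization (essentially the one in \cite{AP} and in Guillemin--Sternberg \cite{GuSt}): reduce to showing that the equivariant cohomology of the fixed-point-free part is $H^*(BT)$-torsion, prove this first for a single orbit $T/H$ via the kernel of $S(\mathfrak{t}_{\mathbb{Q}}^*)\to S(\mathfrak{h}_{\mathbb{Q}}^*)$ (here rational coefficients matter, and they are what the paper uses, so finite isotropy causes no trouble), globalize using the slice theorem, finiteness of orbit types, and a finite cover by invariant tubes with Mayer--Vietoris, and finally localize the long exact sequence of the pair $(M,M^T)$ at $S=H^*(BT)\setminus\{0\}$, which is legitimate since $H^*(BT;\mathbb{Q})$ is a polynomial ring, hence a domain, and localization is exact.

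Two details you should tighten, both of which you partially anticipate. First, $U\setminus M^T$ (and $M\setminus M^T$) is not compact, so you cannot apply your key torsion statement to it verbatim; the standard fix is to work with the boundary sphere bundle of a closed invariant tube around $M^T$, or with the compact complement of the open tube, and then use excision and equivariant homotopy invariance to identify $H^*_T(M,M^T)$ with the relative group of that compact piece. Second, in the Mayer--Vietoris step the pairwise intersections of your tubes need not retract to single orbits, so rather than chasing explicit annihilators through a spectral sequence it is cleaner to localize at $S$ first and argue by induction on the number of tubes: the localized cohomology of each tube vanishes, exactness of localization makes the localized Mayer--Vietoris sequence exact, and vanishing for intersections follows from the same induction since they are again unions of invariant tubes. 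With those adjustments your plan gives a complete and correct proof of the theorem as stated.
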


Using this localization theorem, it is clear that if  $H^*_T(M)$ is actually  a free $H^*(BT)$-module, one can hope for a stronger relation between  the manifold $M$ and its
fixed point set $M^T$. This motivates the following definition. 

\begin{definition}[{\bf Equivariantly Formal}]
We say that an action of a torus $T$ on $M$ is \emph{equivariantly formal} if $H^*_T(M)$ is a free $H^*(BT)\cong S({\mathfrak{t}}_{\mathbb{Q}}^*)$-module. \end{definition}

As a compact manifold has finite-dimensional cohomology, it follows from Corollary 4.2.3 of \cite{AP} that equivariant formality is equivalent to the degeneration of the Leray-Serre spectral sequence of the Borel fibration $M\hookrightarrow M\times_{T} ET\rightarrow BT$ at the $E_2$-term. 
Moreover, the following are some well-known and important properties of equivariantly formal actions.
\begin{proposition}\label{finteMT}
An action of a torus $T$ on a compact manifold M with $H^{\mathrm{odd}}(M) = \{0\}$ is automatically equivariantly formal. The converse implication is true provided that the $T$-fixed point set $M^T$ is finite.
\end{proposition}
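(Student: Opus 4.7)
The plan is to treat the two implications separately: the forward direction via a parity argument on the Leray--Serre spectral sequence of the Borel fibration, and the converse via a comparison between the total rational Betti number and the Euler characteristic.

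For the forward direction, assume $H^{\mathrm{odd}}(M)=0$. Consider the spectral sequence of $M \hookrightarrow M\times_T ET \to BT$, whose $E_2$-page is $E_2^{p,q}=H^p(BT)\otimes H^q(M)$. Since $H^*(BT)\cong S(\mft_\qqq^*)$ is concentrated in even degrees and, by hypothesis, $H^q(M)=0$ for $q$ odd, every nonzero entry $E_2^{p,q}$ satisfies $p+q$ even. The differential $d_r$ has bidegree $(r,1-r)$ and hence reverses the parity of the total degree, so it must vanish on every page. The spectral sequence therefore collapses at $E_2$, which by the discussion preceding the proposition is equivalent to equivariant formality.

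For the converse, suppose $M^T$ is finite and the action is equivariantly formal. Equivariant formality means $H^*_T(M)$ is free over $H^*(BT)$, and its rank equals the total rational Betti number $\sum_i \dim H^i(M)$. By the Borel Localization Theorem (Theorem~\ref{BLT}), after inverting the nonzero homogeneous elements of $H^*(BT)$ the modules $H^*_T(M)$ and $H^*_T(M^T)$ become isomorphic, so they have the same $H^*(BT)$-rank. Since $M^T$ is a finite set of points, $H^*_T(M^T)\cong H^*(BT)^{|M^T|}$, which has rank $|M^T|$. Hence
\[
\sum_i \dim H^i(M) \;=\; |M^T|.
\]
On the other hand, the classical torus-action Euler characteristic identity gives $\chi(M)=\chi(M^T)=|M^T|$, so
\[
\sum_i (-1)^i \dim H^i(M) \;=\; |M^T| \;=\; \sum_i \dim H^i(M).
\]
Subtracting these equalities forces $2\sum_{i\,\mathrm{odd}} \dim H^i(M)=0$, whence $H^{\mathrm{odd}}(M)=0$.

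The forward direction is essentially a formal parity bookkeeping on the spectral sequence and is routine. The main point to get right is in the converse: one must use equivariant formality to identify the $H^*(BT)$-rank of $H^*_T(M)$ with the total Betti number of $M$, then combine Borel localization (to count this rank as $|M^T|$) with the torus Euler characteristic formula $\chi(M)=\chi(M^T)$. Neither of these ingredients is deep, but the argument fails without both of them together, which is where the finiteness of $M^T$ is genuinely used.
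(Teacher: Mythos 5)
Your forward direction is exactly the paper's argument: with $H^{\mathrm{odd}}(M)=0$ the $E_2$-page of the Borel fibration is concentrated in even total degree, the differentials shift total degree by one, so the sequence degenerates at $E_2$, which (using finite-dimensionality of $H^*(M)$) is equivalent to equivariant formality. For the converse you take a genuinely different, numerical route. The paper argues degreewise: freeness of $H^*_T(M)$ makes the localization map $H^*_T(M)\to H^*_T(M^T)$ injective (torsion-free module, torsion kernel), and since $M^T$ is finite the target $H^*(BT)\otimes H^*(M^T)$ vanishes in odd degrees, so $H^*_T(M)\cong H^*(BT)\otimes H^*(M)$ does too, forcing $H^{\mathrm{odd}}(M)=0$. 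You instead compare ranks: formality gives $\operatorname{rank}_{H^*(BT)}H^*_T(M)=\dim H^*(M)$, localization gives that this rank equals $|M^T|$, and the classical identity $\chi(M)=\chi(M^T)=|M^T|$ then yields $\dim H^*(M)=\chi(M)$, i.e.\ $2\sum_{i\ \mathrm{odd}}\dim H^i(M)=0$. Both arguments are correct; the paper's is slightly more self-contained (it uses only the localization theorem already quoted, and the injectivity argument is a one-liner once freeness is in hand), while yours imports the fixed-point Euler characteristic theorem as an extra classical ingredient but in exchange avoids any discussion of the grading on $H^*_T(M^T)$ and reduces the converse to pure dimension counting. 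If you keep your version, you should at least record why $H^*_T(M)$ has $H^*(BT)$-rank equal to the total Betti number (degeneration of the spectral sequence identifies the associated graded with $H^*(BT)\otimes H^*(M)$) and cite a reference for $\chi(M)=\chi(M^T)$, since neither is stated in the paper.
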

The first statement of Proposition \ref{finteMT} follows because if $H^{\mathrm{odd}}(M)=\{0\}$ then the spectral sequence degenerates at the $E_2$-term. The second statement is a consequence of the Borel Localization Theorem, as then $H^*_T(M)\cong H^*(BT)\otimes H^*(M)$ injects into the module $H^*_T(M^T) \cong H^*(BT)\otimes H^*(M^T)$ which vanishes in odd degrees. The next proposition is Theorem 3.10.4 in \cite{AP}.

\begin{proposition}\label{eformal}\cite{AP}
For any action of a torus $T$ on a compact manifold $M$, we have $\dim\, H^*(M^T)\leq \dim\, H^*(M)$. Equality holds if and only if the action is equivariantly formal.
\end{proposition}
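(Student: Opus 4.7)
The plan is to combine the Leray--Serre spectral sequence of the Borel fibration $M\hookrightarrow M\times_T ET \to BT$ with Borel Localization (Theorem \ref{BLT}). The $E_2$-page is $E_2^{p,q} = H^p(BT)\otimes H^q(M)$ (the local system is trivial since $BT$ is simply connected), converging to $H^*_T(M)$, and each page is an $H^*(BT)$-module whose differentials are $H^*(BT)$-linear.

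For the inequality, I would first observe that $E_2$ is a free $H^*(BT)$-module of rank $\dim_\qqq H^*(M)$. As each $E_{r+1}$ is a subquotient of $E_r$ by an $H^*(BT)$-linear differential, the $H^*(BT)$-rank is non-increasing across the pages, so
\[
\operatorname{rank}_{H^*(BT)} H^*_T(M) = \operatorname{rank}_{H^*(BT)} E_\infty \leq \dim_\qqq H^*(M).
\]
By the Borel Localization Theorem, the restriction $H^*_T(M)\to H^*_T(M^T)$ is an isomorphism modulo $H^*(BT)$-torsion; since $T$ acts trivially on $M^T$, one has $H^*_T(M^T)\cong H^*(BT)\otimes H^*(M^T)$, a free module of rank $\dim_\qqq H^*(M^T)$. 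Hence $\operatorname{rank}_{H^*(BT)} H^*_T(M) = \dim_\qqq H^*(M^T)$, and combining the two estimates yields $\dim_\qqq H^*(M^T) \leq \dim_\qqq H^*(M)$.

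For the equality statement, one direction is immediate from the characterization of equivariant formality recalled just before Proposition \ref{finteMT}: if $H^*_T(M)$ is free, the spectral sequence collapses at $E_2$, so $E_\infty = E_2$ has rank $\dim_\qqq H^*(M)$, forcing equality in the chain above. The converse is the main technical obstacle: assuming $\dim_\qqq H^*(M^T) = \dim_\qqq H^*(M)$, the rank of $E_\infty$ equals the rank of the free module $E_2$, and I would argue via a Poincar\'e-series computation that any nonzero $H^*(BT)$-linear differential $d_r$ would strictly decrease the total rank (since $H^*(BT)$ is an integral domain, any nonzero $d_r$ has image of positive rank in a torsion-free subquotient). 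Thus all $d_r$ vanish, $E_\infty = E_2$ is free, and a standard extension argument for the degree filtration on $H^*_T(M)$, exploiting that $H^*(BT)$ is a polynomial ring, lifts freeness of the associated graded to freeness of $H^*_T(M)$ itself, establishing equivariant formality.
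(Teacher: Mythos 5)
The paper itself gives no proof of this proposition: it is quoted verbatim as Theorem 3.10.4 of \cite{AP}, so there is no internal argument to compare against, and what you have written is essentially the standard argument underlying the cited theorem. Your route is correct: Borel localization identifies $\dim H^*(M^T)$ with the $H^*(BT)$-rank of $H^*_T(M)$ (since $H^*_T(M^T)\cong H^*(BT)\otimes H^*(M^T)$ is free and torsion modules have rank zero), the rank of $E_\infty$ is at most that of the free module $E_2=H^*(BT)\otimes H^*(M)$, and the two directions of the equality statement are handled appropriately — one via the degeneration criterion the paper recalls from Corollary 4.2.3 of \cite{AP}, the other by showing all differentials vanish and then lifting freeness of $E_\infty=E_2$ through the finite filtration on each $H^n_T(M)$.

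One step is stated too strongly: it is not true in general that a nonzero $H^*(BT)$-linear differential strictly decreases the total rank, because pages beyond $E_2$ may contain torsion, and a differential whose image lies in the torsion submodule costs no rank. The claim is rescued by the induction you only hint at with the phrase ``torsion-free subquotient'': assuming $\dim H^*(M^T)=\dim H^*(M)$, i.e.\ $\operatorname{rank}E_\infty=\operatorname{rank}E_2$, the differential $d_2$ must vanish, since a nonzero map out of the torsion-free module $E_2$ has image of positive rank and rank, computed over the fraction field of the domain $H^*(BT)$, once lost can never be regained on later pages; then $E_3=E_2$ is again torsion-free, and one repeats, so all $d_r$ vanish. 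With that induction made explicit, together with the routine observations that the spectral sequence stabilizes at a finite page (the fiber degree is bounded by $\dim M$) and that the $H^*(BT)$-rank of $H^*_T(M)$ equals that of its associated graded $E_\infty$ by additivity of rank over the finite filtration in each degree, your proof is complete and is, in substance, the proof of the result cited from \cite{AP}.
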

The Leray-Hirsch theorem implies that for equivariantly formal actions the ordinary cohomology ring is encoded in the equivariant cohomology algebra.
\begin{proposition}\label{Ecohom}
For an equivariantly formal action of a torus $T$ on a compact manifold $M$ the natural map $H^*_T(M)\to H^*(M)$ is surjective and induces an isomorphism of $\rrr$-algebras
\[
\frac{H^*_T(M)}{S^+({\mathfrak{t}}_{\mathbb{Q}}^*)\cdot H^*_T(M)}\cong H^*(M),
\]
where $S^+({\mathfrak{t}}_{\mathbb{Q}}^*)$ denotes the ideal in $S({\mathfrak{t}}_{\mathbb{Q}}^*)$ generated by real-valued polynomials of positive degree.
\end{proposition}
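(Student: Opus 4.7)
The plan is to derive this proposition from the Leray--Hirsch theorem applied to the Borel fibration $M\hookrightarrow M\times_T ET\to BT$. By definition, equivariant formality means $H^*_T(M)$ is a free $H^*(BT)$-module; equivalently, as noted immediately before the statement, the Leray--Serre spectral sequence of this fibration degenerates at $E_2$. Hence $H^*_T(M)$ is abstractly isomorphic to $H^*(BT)\otimes H^*(M)$ as an $H^*(BT)$-module, and this isomorphism can be realized concretely by choosing lifts of a cohomology basis of the fiber.

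First I would choose homogeneous classes $a_1,\ldots,a_r\in H^*_T(M)$ whose restrictions $\bar a_i=\iota^*(a_i)$ under the fiber-inclusion restriction $\iota^*\colon H^*_T(M)\to H^*(M)$ form a $\mathbb{Q}$-basis of $H^*(M)$. Such a lift exists precisely because the spectral sequence collapses: the edge homomorphism $H^*_T(M)\to H^0(BT)\otimes H^*(M)\cong H^*(M)$ is surjective on the associated graded, so one can lift any basis. The Leray--Hirsch theorem then says the $H^*(BT)$-linear map sending $p\otimes a_i\mapsto p\cdot a_i$ from $H^*(BT)\otimes_{\mathbb{Q}}\langle a_1,\ldots,a_r\rangle$ to $H^*_T(M)$ is an isomorphism. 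In particular every element of $H^*_T(M)$ can be written uniquely as $\sum_i p_i\cdot a_i$ with $p_i\in H^*(BT)=S({\mathfrak{t}}_{\mathbb{Q}}^*)$, and since the $\bar a_i$ already span $H^*(M)$, the map $\iota^*$ is surjective.

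Next I would identify the kernel of $\iota^*$. The restriction map sends $p\cdot a_i$ to $p(0)\bar a_i$, because the constants in $H^*(BT)$ restrict to the identity on the fiber while positive-degree classes from $H^*(BT)$ restrict to zero (they are pulled back from the base $BT$). Hence $\sum p_i a_i\in\ker\iota^*$ if and only if $p_i(0)=0$ for every $i$, if and only if each $p_i$ lies in $S^+({\mathfrak{t}}_{\mathbb{Q}}^*)$. This shows $\ker\iota^*=S^+({\mathfrak{t}}_{\mathbb{Q}}^*)\cdot H^*_T(M)$, and the First Isomorphism Theorem then yields the desired algebra isomorphism.

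The one subtle point, and arguably the main content beyond Leray--Hirsch itself, is that the isomorphism is of algebras and not merely of $H^*(BT)$-modules. Since $\iota^*$ is induced by the inclusion of the fiber into the total space of the Borel fibration, it is automatically a ring homomorphism; therefore the surjection onto $H^*(M)$ and its factorization through the quotient by the ideal $S^+({\mathfrak{t}}_{\mathbb{Q}}^*)\cdot H^*_T(M)$ are automatically algebra maps, and no further work is required to upgrade the module statement to the stated algebra isomorphism.
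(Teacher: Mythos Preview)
Your proof is correct and follows exactly the approach the paper indicates: the paper does not give a detailed proof of this proposition but simply states, in the sentence preceding it, that it is a consequence of the Leray--Hirsch theorem applied to the Borel fibration. Your argument is precisely the standard unpacking of that assertion---lifting a basis via collapse of the spectral sequence, identifying the kernel of the fiber restriction as $S^+({\mathfrak{t}}_{\mathbb{Q}}^*)\cdot H^*_T(M)$, and noting that $\iota^*$ is a ring map---so there is nothing to add.
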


For equivariantly formal actions, the Borel Localization Theorem \ref{BLT} gives an embedding of $H^*_T(M)$ into $H^*_T(M^T)$. The image of this embedding can be described as follows, combining Lemma 2.3 of Chang and Skjelbred \cite{CS} for the first isomorphism with  the version of the same lemma given in Theorem 11.51 of Guillemin and Sternberg \cite{GuSt} for the second isomorphism.

\begin{CSL}
\cite{CS}, \cite{GuSt}
\label{CSLemma}  If a $T$-action on $M$ is equivariantly formal, then the
equivariant cohomology $H^*_T(M)$ only depends on the fixed point set $M^T$ and the $1$-skeleton $M_1$:
$$H^*_T(M)\cong {\mathrm{im}}\, (H^*_T(M_1)\to H^*_T(M^T))\cong  \bigcap {\mathrm{im}}(H^*_T(M^K)\rightarrow H^*_T(M^T)),$$
where the intersection is taken over all corank-1 subtori $K$ of $T$.
\end{CSL}
 Moreover, equivariant formality is {\em inherited} by subtori of the $T$-action. More precisely, we have the following well-known proposition and corollary, see, for example \cite{H}. 
\begin{proposition}\label{ef} If a $T$-action on $M$ is equivariantly formal, then for any subtorus $K$ of $T$, both the $K$-subaction on $M$ and the induced $T/K$-action on  $M^K$ are equivariantly
formal.
\end{proposition}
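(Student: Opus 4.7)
The plan is to deduce both statements from the dimension criterion in Proposition \ref{eformal}, which characterizes equivariant formality of a torus action by the equality $\dim H^*(M^T) = \dim H^*(M)$. The advantage of working with this numerical characterization, rather than directly with the freeness of $H^*_T(M)$ as an $H^*(BT)$-module, is that it converts the conclusion into a comparison of Betti numbers that can be chained through the intermediate space $M^K$.

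First I would observe the elementary identity $(M^K)^{T/K} = M^T$: a point of $M$ is fixed by all of $T$ precisely when it is fixed by $K$ and by the induced $T/K$-action on $M^K$. Since $M^K$ is a closed (in general disconnected) submanifold of the compact manifold $M$, its rational cohomology is finite-dimensional, so Proposition \ref{eformal} may be applied both to the $K$-action on $M$ and to the $T/K$-action on $M^K$. Together with Proposition \ref{eformal} applied to the $T$-action on $M$, this yields the chain
\begin{equation*}
\dim H^*(M)\;=\;\dim H^*(M^T)\;=\;\dim H^*\!\bigl((M^K)^{T/K}\bigr)\;\leq\;\dim H^*(M^K)\;\leq\;\dim H^*(M),
\end{equation*}
where the first equality uses the hypothesis that $T$ acts equivariantly formally on $M$.

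The chain above collapses to equality, and then applying the ``only if'' direction of Proposition \ref{eformal} to the inner inequality $\dim H^*(M^K)\leq \dim H^*(M)$ shows that the $K$-subaction on $M$ is equivariantly formal, while applying it to $\dim H^*((M^K)^{T/K})\leq \dim H^*(M^K)$ shows that the residual $T/K$-action on $M^K$ is equivariantly formal. The only minor subtlety, rather than a genuine obstacle, is checking that Proposition \ref{eformal} is valid for $M^K$, which is typically disconnected; this is routine since $M^K$ is a finite disjoint union of closed submanifolds, and Proposition \ref{eformal} only uses finite-dimensionality of the rational cohomology together with the Borel Localization Theorem \ref{BLT}, both of which pass to such disjoint unions. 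No new input beyond Theorem \ref{BLT} and Proposition \ref{eformal} is needed.
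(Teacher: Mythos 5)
Your proof is correct. Note that the paper itself gives no argument for Proposition \ref{ef}: it is stated as a well-known fact with a reference to \cite{H}, so there is no internal proof to compare against. Your route --- the identity $(M^K)^{T/K}=M^T$ combined with the Betti-number characterization of equivariant formality from Proposition \ref{eformal}, applied to the $T$-action on $M$, the $K$-action on $M$, and the $T/K$-action on $M^K$, forcing the chain of inequalities to collapse --- is exactly the standard argument in the literature (it is essentially how the statement is proved in \cite{H} and in Allday--Puppe), and your remark that Proposition \ref{eformal} applies to the possibly disconnected closed submanifold $M^K$ addresses the only point that needed care. The only cosmetic quibble is the phrase ``only if direction'': what you actually invoke is the implication that equality of total Betti numbers yields equivariant formality, which is the direction you need and which Proposition \ref{eformal} provides.
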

\begin{corollary}\label{He} If a $T$-action on $M$ is equivariantly formal, then for any subtorus
$K$ of $T$, every connected component of $M^K$ has $T$-fixed points.
\end{corollary}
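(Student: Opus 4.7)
The plan is to deduce the conclusion directly from the two inherited-formality facts recorded just before the corollary, namely Proposition \ref{ef} and Proposition \ref{eformal}. Let $K \subset T$ be a subtorus, and let $N$ be a connected component of $M^K$. Since $T$ is connected and $K$ is normal in $T$ (indeed $T$ is abelian), the residual $T/K$-action on $M^K$ is defined, and because $T/K$ is connected it preserves each connected component of $M^K$; in particular, it acts on $N$.

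First I would apply Proposition \ref{ef} to conclude that the induced $T/K$-action on $M^K$ is equivariantly formal. Next I would argue that equivariant formality passes to each connected component $N$: the Borel construction commutes with disjoint unions, so $H^*_{T/K}(M^K) \cong \bigoplus_i H^*_{T/K}(N_i)$ as $H^*(B(T/K))$-modules, where the $N_i$ are the connected components of $M^K$; freeness of the left-hand side over $H^*(B(T/K))$ is therefore equivalent to freeness of each summand. Hence the $T/K$-action on $N$ is equivariantly formal.

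Now I would apply Proposition \ref{eformal} to the $T/K$-action on the compact manifold $N$: equivariant formality gives
\[
\dim H^*(N^{T/K}) \;=\; \dim H^*(N) \;\geq\; 1,
\]
the last inequality because $N$ is a nonempty connected space, so $H^0(N) \neq 0$. Consequently $N^{T/K}$ is nonempty. Finally, since any point of $N^{T/K} \subset M^K$ is fixed by $K$ and by $T/K$, it is fixed by all of $T$, which produces a $T$-fixed point in $N$, as required.

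The argument is essentially a packaging of standard facts; the only step requiring a moment of thought is the reduction from equivariant formality on $M^K$ to equivariant formality on the individual component $N$, which hinges precisely on the fact that the connected group $T/K$ cannot permute components. I do not anticipate any genuine obstacle beyond making this observation explicit.
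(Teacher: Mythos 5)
Your argument is correct; note that the paper gives no proof of this corollary at all — it is quoted as well known, with a pointer to \cite{H} — so there is nothing to compare line by line, and your route (Proposition \ref{ef} to get equivariant formality of the $T/K$-action on $M^K$, descent to a single component, Proposition \ref{eformal} together with $H^0(N)\neq 0$, and finally the observation that a point fixed by $K$ and by $T/K$ is fixed by all of $T$) is the standard one. The only step stated more casually than it deserves is the claim that freeness of $H^*_{T/K}(M^K)\cong\bigoplus_i H^*_{T/K}(N_i)$ over $H^*(B(T/K))$ is \emph{equivalent} to freeness of each summand: for arbitrary modules a direct summand of a free module is only projective, so you should either invoke that finitely generated, non-negatively graded projective modules over the graded polynomial ring $S({\mathfrak{t}}_{\mathbb{Q}}^*)$ are free (graded Nakayama), or sidestep the module theory entirely with the dimension criterion: by Propositions \ref{ef} and \ref{eformal}, $\sum_i \dim H^*(N_i^{T/K}) = \dim H^*\bigl((M^K)^{T/K}\bigr) = \dim H^*(M^K) = \sum_i \dim H^*(N_i)$, while Proposition \ref{eformal} also gives the termwise inequality $\dim H^*(N_i^{T/K})\leq \dim H^*(N_i)$ for each compact component $N_i$, forcing termwise equality and in particular $N_i^{T/K}\neq\emptyset$ for every $i$. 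Either patch is one line, so this is a presentational point rather than a genuine gap; the remaining steps, including component-preservation by the connected group $T/K$ and the identification $N^{T/K}\subset M^T$, are fine.
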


\subsection{Even-dimensional GKM theory}\label{sec:evendimGKM}
The class of manifolds, now referred to as {\em GKM manifolds}, were first introduced in the seminal work of Goresky, Kottwitz, and MacPherson \cite{GKM} to study the relation between  equivariant cohomology and ordinary cohomology, and are named for these authors. The GKM Theorem \ref{thm:GKM} (cf.\ Theorem 1.2.2 of \cite{GKM}), states that over an appropriate coefficient ring $R$,  the equivariant cohomology ring of a GKM manifold $M$ can be computed via its $1$-skeleton  and the isotropy information of the GKM torus action.
Motivated by their work, the concepts of GKM manifold and  GKM graph were introduced by Guillemin and Zara in \cite{GZ} to build a bridge between the topology and the combinatorics of these spaces.  We begin with the formal definition of a GKM manifold.

\begin{definition}[{\bf GKM Torus Action and Manifold}]\label{evenGKM}
We say that the effective action of a torus $T^l=T$, $l\leq n$, on an orientable, compact, connected manifold $M^{2n}$ is {\em GKM}, and $M^{2n}$ is called a {\em GKM manifold}, if 
\begin{enumerate}
\item the fixed point set $M^T$ of the action is finite;
\item for every $p\in M^T$, the weights $\alpha_{i, p}\in {{\mathfrak{t}}_{\mathbb{Q}}}^*/\{\pm 1\}, i=1, \hdots, n,$ of the isotropy representation of $T$ on $T_pM$ are pairwise linearly independent; and
\item the $T$-action is equivariantly formal.
\end{enumerate}

\end{definition}

We note that the original definition in \cite{GZ} does not require  the manifold to be either orientable or the $T$-action to be equivariantly formal, rather, both are assumed (sometimes implicitly) as separate hypotheses in their theorems. We include both hypotheses in our definition, since both are included in the definition of a GKM manifold in most of the recent literature (see, for example, Goertsches and Wiemeler \cite{GW1, GW2} and Kuroki \cite{Ku}).

By Proposition \ref{finteMT}, Conditions $1$ and $3$ imply the vanishing of the odd-dimensional rational cohomology groups of $M$. 
Condition $2$ is equivalent to the condition that $M_1$, the $1$-skeleton of $M$,  consists of a disjoint union of $T$-invariant, orientable submanifolds, each of which
is either fixed point free, or an embedded $S^2$. By Corollary \ref{He}, 
Condition $3$ implies that $M_1$ consists entirely of $T$-invariant embedded 2-spheres. Moreover, Conditions $1$ and $3$ combined with Proposition \ref{eformal} tells us that each such $S^2$ contains
exactly two $T$-fixed points.

 \begin{definition}[{\bf GKM$_k$ Torus Action and Manifold}]
We say that the effective action of a torus $T$ on an orientable, compact, connected manifold $M^{2n}$ is {\em GKM$_k$}, and we call $M^{2n}$ a {\em GKM$_k$ manifold}, if 
\begin{enumerate}
\item $M^{2n}$ is GKM,  and 
\item for each $p\in M^T$  any set of $k$ weights, $\alpha_{i, p}\in {{\mathfrak{t}}_{\mathbb{Q}}}^*/\{\pm 1\}, i=1, \hdots, n,$ of the isotropy representation of $T$ on $T_pM$  is linearly independent.
\end{enumerate}

\end{definition} 

\begin{remark} A GKM$_{k}$ manifold is GKM$_l$ for all $2\leq l\leq k$, and a GKM$_2$ manifold is a GKM manifold. 
\end{remark}

By convention, the $T^1$-action on $S^2$ is considered to be a GKM$_2$ manifold.
Also remark that the linear independence is well-defined for elements that are only defined up to sign. 
Condition $2$ is equivalent to  $M_{k-1}$, the ($k-1$)-skeleton, 
 being a union of $T$-invariant submanifolds, which are either are fixed point free or are
($2k-2$)-dimensional and have $T$-fixed points. Observe that by Corollary \ref{He}, the definition implies that $M_{k-1}$ consists entirely of ($2k-2$)-dimensional $T$-invariant submanifolds.

Once we have established some conventions,  we  define  the notion of an {\em abstract} GKM$_k$ graph, which consists of a triple: a graph, an axial function, and a connection.
 However, in Section \ref{s4} we will be working with  {\em geometric} GKM$_k$ graphs, that is,  graphs that  correspond to the graph of a  GKM$_k$ manifold.  We note that an abstract GKM$_k$ graph may not correspond to the graph of a  GKM$_k$ manifold and that the graph described in our Main Theorem \ref{main} is abstract.
  
We  employ the following conventions when speaking about abstract graphs. Given an abstract graph $\Gamma$, we denote by 
\begin{itemize}
\item $E(\Gamma)$  its set of oriented edges; and by 
\item $V(\Gamma)$ its set of vertices. 
\end{itemize} We  always assume that both the edge and vertex sets are finite, and we  allow multiple edges between vertices. 
For an edge $e\in E(\Gamma)$ we denote by
\begin{itemize}
\item $\bar e$ the edge with opposite orientation; 
\item $i(e)$ its initial vertex; and 
\item $t(e)$ its terminal vertex. 
\end{itemize} 
We assume that for any edge $e$, $i(e)\neq t(e)$, that is, an edge cannot connect a vertex to itself. For a vertex $v\in V(\Gamma)$ the set of edges emanating from $v$ is denoted by $E_v$. 

Before we can define an abstract  GKM$_k$ graph, we define 
 a connection, $\nabla$, on a graph $\Gamma$, as in \cite{GZ}. 
\begin{definition}[{\bf Connection}]\label{defnconnection}
A \emph{connection} on a graph $\Gamma$ is a collection $\nabla$ of maps $\nabla_e:E_{i(e)}\to E_{t(e)}$, for each $e\in E(\Gamma)$, such that
\begin{enumerate}
\item $\nabla_e(e) = \bar e$ and
\item $\nabla_{\bar e} = (\nabla_e)^{-1}$,
\end{enumerate}
for all $e\in E(\Gamma)$.
\end{definition}

 We are now in a position to define an {\em abstract GKM$_k$ graph}. We note that abstract GKM graphs were originally defined in \cite{GZ}, and called {\em abstract one-skeleta} there. However in the subsequent literature, they have been referred to as abstract GKM graphs, so we do so as well. 

 \begin{definition}[{\bf Abstract GKM$_k$ graph}] \label{defn:GKMkgraph}
Let $k\geq 2$. Then a {\em GKM$_k$ graph}, $(\Gamma,\alpha,\nabla)$, or simply $(\Gamma,\alpha)$ when $k\geq 3$, 
consists of a 
graph  $\Gamma$, an axial function $\alpha: E(\Gamma)\rightarrow {\frak{t}}_{\mathbb{Q}}^*/\{\pm 1\}$,  such that
\begin{enumerate}
\item the underlying graph $\Gamma$ is $n$-valent;
\item for any $e\in E(\Gamma)$, $\alpha(\bar e) = \alpha(e)$; and 
\item for any $v\in V(\Gamma)$ and any set of $k$ distinct edges $e_1,\ldots,e_k\in E_v$, the elements $\alpha(e_1),\ldots,\alpha(e_k)$ are linearly independent;
\item for any $v\in V(\Gamma)$ and any pair of distinct edges $e,f\in E_v$, we have that 
$$\alpha(\nabla_e f)=\pm \alpha(f) + c_{e, f}\alpha(e),$$
for some constant $c = c_{e, f}\in \mathbb{Z}$, depending on $e$ and $f$.

\end{enumerate}
\end{definition}

\begin{remark}\label{canonical} The abstract GKM$_k$ graph for $k\geq 3$ consists of a triple, but since the connection is canonical for $k\geq 3$ \cite{GZ}, there is no need to list it in this case.
 \end{remark}

We  also define a variant of the abstract GKM$_k$ graph which is signed, motivated by the fact that one often assumes that a GKM manifold admits an invariant almost complex structure.
\begin{definition}[{\bf Abstract Signed GKM$_k$ graph}] \label{remark2.18} Let $k\geq 2$. Then a {\em signed GKM$_k$ graph} is a GKM$_k$ graph where $\pm$ is replaced by $+$. That, is, the following modifications are made to Definition \ref{defn:GKMkgraph}.
 \begin{enumerate} 
 \item The axial function now takes values in the $R$-module $V$, that is, $\alpha: E(\Gamma)\rightarrow V$. 
 \item In Part (4)  is modified to be $$\alpha(\nabla_e f)= \alpha(f) + c_{e, f}\alpha(e).$$

\end{enumerate}
 \end{definition}

In order to motivate the previous definitions, we now describe the geometric graph obtained from a GKM$_k$ manifold, $M^{2n}$.
\begin{definition}[{\bf Geometric GKM$_k$ graph}]\label{geometric} Let $M^{2n}$ be a GKM$_k$ manifold. Then the {\em geometric GKM$_k$ graph of $M$} is an abstract GKM$_k$ graph,  
$(\Gamma_M, \alpha_M, \nabla_M)$ or simply $(\Gamma_M, \alpha_M)$ for $k\geq 3$,   
where $\Gamma_M$ and    $\alpha_M$ 
are defined as follows. 
\begin{enumerate}
\item $\Gamma_M$  is the quotient by the torus action of the $1$-skeleton, $M_1/T$, considered as a graph.

\item The axial function $\alpha_M: E(\Gamma_M)\rightarrow {\frak{t}}_{\mathbb{Q}}^*/\{\pm 1\}$ is defined on each edge, $e$,  of $\Gamma_M$, as the corresponding weight 
of the isotropy representation, considered as an element of 
${\frak{t}}_{\mathbb{Q}}^*/\{\pm 1\}$. 
\end{enumerate}
\end{definition}
\begin{remark}
With this definition 
$V(\Gamma_M)$ corresponds to the (isolated) fixed points of the torus action and 
$E(\Gamma_M)$ corresponds to the $2$-spheres fixed by a codimension one subtorus of $T$, containing two isolated fixed points of $T$. 
\end{remark}

Equivalently,  given a GKM$_k$ action of a torus $T$ on an orientable manifold $M^{2n}$, the geometric graph of this action is constructed as follows: we have one vertex for each fixed point, every invariant two-sphere contains exactly two fixed points, and we associate to it one edge connecting the corresponding vertices. The axial function associates to each edge, that is, invariant two-sphere, the corresponding weight of the isotropy representation.

 \begin{remark}  
Given a GKM$_k$-manifold $M^{2n}$, for every choice of $l\leq k-1$ edges $e_1,\ldots,e_l$ at a vertex $p$, we can define a $2l$-dimensional $T$-invariant submanifold $N^{2l}$ of $M$, that is itself GKM$_l$ with the induced torus action. In fact, we take ${\mathfrak{h}} := \bigcap_{i=1}^l \ker \alpha_M(e_i)$, consider the subtorus $H\subset T$ with Lie algebra $\mathfrak h$, and let $N$ be the connected component of $M^H$ containing the fixed point $p$. The GKM$_k$ condition implies that $N$ is $2l$-dimensional, as its tangent space at $p$ is precisely the sum of those weight spaces whose weights vanish on $\mathfrak h$. We then see that in the above definition 
of $\Gamma_M$,  the graph of the GKM$_k$ manifold $M$, the image of  the $1$-skeleton of $N$ is  a subgraph $\Gamma_N\subset \Gamma_M$, and we call this subgraph a {\em face}. In the special case when $M^{2n}$ is a quasitoric manifold, then $M/T$ is an $n$-dimensional simple polytope, $P^n$. In this special case, $\Gamma_M$ is the $1$-skeleton of $P^n$ and each face of $P^n$ corresponds to some $N^{2l}/T$, whose $1$-skeleton is $\Gamma_N$.
 \end{remark}

We now explicitly describe the connection, $\nabla_M$, in the GKM$_3$ case, as it is easier to describe than in the GKM case, and moreover we are only concerned with the GKM$_3$ case in this article.
 In this setting, given any two distinct edges $e,f$ emanating from a vertex $v$, there is a unique $2$-dimensional face, $F$, 
containing $e$ and $f$. Let $e'\neq \bar{e}$ be the unique edge in  $F$,  such that $i(e')=t(e)$.
Setting  $(\nabla_M)_e f=e'$, it follows from \cite{GW2} that $(\Gamma_M, \alpha_M, \nabla_M)$ satisfies the conditions for an abstract GKM$_3$ graph.
In particular, we see that the connection allows us to slide edges along edges inside the corresponding two-dimensional face of the graph. 

This leads us to define the concept of an {\em abstract face} of an abstract GKM$_k$ graph.  Let $(\Gamma, \alpha, \nabla)$ be an abstract  GKM$_k$ graph, then we say that $\Gamma'$ is an {\em $l$-dimensional face} of $(\Gamma, \alpha, \nabla)$ if it is an $l$-valent subgraph, invariant under $\nabla$.

We are now in a position to define the equivariant cohomology of an abstract  GKM graph, which was first done in Section 1.7 of \cite{GZ} and was denoted simply by $H(\Gamma,\alpha)$. Since we also define the cohomology of an abstract  GKM graph  below in Definition \ref{defn:cohomgraph}, to avoid confusion, we denote the equivariant cohomology of an abstract  GKM graph by $H_T(\Gamma,\alpha)$. 

\begin{definition} [{\bf Equivariant Cohomology of an abstract GKM Graph}]\label{eqco}
The \emph{equivariant cohomology} of a GKM graph $(\Gamma,\alpha,\nabla)$ is defined as 

$$
H_T^* (\Gamma,\alpha) = \{(f_v)_{v\in V(\Gamma)}\in \bigoplus_{v\in V(\Gamma)} S({\mathfrak{t}}_{\mathbb{Q}}^*)\mid \alpha(e)\text{ divides }f_{i(e)}-f_{t(e)}\text{ for all } e\in E(\Gamma)\},
$$
where the generators of $S({\mathfrak{t}}_{\mathbb{Q}}^*)$ are assigned degree $2$. It is naturally an $S({\mathfrak{t}}_{\mathbb{Q}}^*)$-algebra.
\end{definition}

We now recall Theorem 1.2.2, also known as the GKM Theorem, in \cite{GKM} here.
\begin{theorem}[{\bf GKM Theorem}]\cite{GKM}\label{thm:GKM}
For a GKM action of a torus $T$ on $M$, with GKM graph $\Gamma_M$, the injection $H^*_T(M)\to H^*_T(M^T) = \bigoplus_{p\in M^T}S({\mathfrak{t}}_{\mathbb{Q}}^*)$ has as image exactly $H^*_T(\Gamma_M,\alpha_M)$. Thus, $H^*_T(M)\cong H^*_T(\Gamma_M,\alpha_M)$ as $S({\mathfrak{t}}_{\mathbb{Q}}^*)$-algebras.
\end{theorem}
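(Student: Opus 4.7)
The plan is to prove the theorem by combining the Borel Localization Theorem, the Chang--Skjelbred Lemma, and a direct analysis of the one-skeleton $M_1$, exactly in the manner in which the GKM graph itself was motivated. Since the $T$-action on $M$ is equivariantly formal, Theorem~\ref{BLT} shows that the restriction $H^*_T(M)\to H^*_T(M^T)=\bigoplus_{p\in M^T}S({\mathfrak{t}}_{\mathbb{Q}}^*)$ is injective. By the Chang--Skjelbred Lemma~\ref{CSLemma}, its image equals
\[
\bigcap_{K}\,\mathrm{im}\bigl(H^*_T(M^K)\to H^*_T(M^T)\bigr),
\]
the intersection running over all corank-one subtori $K\subset T$. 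Thus the problem reduces to computing $H^*_T(M^K)$ for each such $K$ and comparing the resulting conditions with those defining $H^*_T(\Gamma_M,\alpha_M)$ in Definition~\ref{eqco}.

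Next, I would identify the components of $M^K$. Fix $K$ with Lie algebra $\mathfrak{k}\subset\mathfrak{t}$ of codimension one, and let $p\in M^T$. The connected component of $M^K$ through $p$ has tangent space at $p$ equal to the sum of those weight spaces in $T_pM$ whose weights vanish on $\mathfrak{k}$. The pairwise linear independence of the weights $\alpha_{i,p}$ at $p$ (Condition~(2) of Definition~\ref{evenGKM}) forces at most one weight to vanish on any corank-one $\mathfrak{k}$. Hence each component of $M^K$ containing a $T$-fixed point is either $0$- or $2$-dimensional. By Proposition~\ref{ef} the induced action of $T/K\cong S^1$ on $M^K$ is equivariantly formal, so by Corollary~\ref{He} every component carries $T$-fixed points. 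A closed, orientable surface admitting an effective circle action with more than one fixed point must be $S^2$, and Proposition~\ref{eformal} applied component-wise shows that such an $S^2$ contains exactly two $T$-fixed points. Thus $M^K$ is the disjoint union of isolated $T$-fixed points and invariant two-spheres, each with exactly two $T$-fixed endpoints; these are precisely the edges of $\Gamma_M$ whose axial value is proportional to the unique weight vanishing on $\mathfrak{k}$.

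It remains to compute the image of $H^*_T(M^K)\to H^*_T(M^T)$. For an isolated fixed point $p$ the contribution is the full factor $S({\mathfrak{t}}_{\mathbb{Q}}^*)$, imposing no condition. For an invariant two-sphere $S\subset M^K$ with $T$-fixed endpoints $p,q$ and nonzero weight $\alpha=\alpha_M(e)\in{\mathfrak{t}}_{\mathbb{Q}}^*/\{\pm1\}$, a direct computation (using the Mayer--Vietoris or Borel construction for the standard $T$-action on $S^2$, or equivalently equivariant formality together with $H^*(S^2)\cong H^*(\mathrm{pt})\oplus H^2(S^2)$) identifies the image of $H^*_T(S)$ inside $S({\mathfrak{t}}_{\mathbb{Q}}^*)\oplus S({\mathfrak{t}}_{\mathbb{Q}}^*)$ as
\[
\{(f_p,f_q)\mid \alpha \text{ divides } f_p-f_q\}.
\]
Intersecting over all corank-one $K$ contributes exactly one such divisibility condition per edge of $\Gamma_M$, which is the defining condition of $H^*_T(\Gamma_M,\alpha_M)$. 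This yields an isomorphism of $S({\mathfrak{t}}_{\mathbb{Q}}^*)$-algebras, proving the theorem.

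The main obstacle I anticipate is the identification of the $2$-dimensional components of $M^K$ as invariant two-spheres: this requires combining the inherited equivariant formality (Proposition~\ref{ef}), the Borel localization count of fixed points (Proposition~\ref{eformal}), and the orientability assumption built into Definition~\ref{evenGKM} to rule out the possibility of fixed-point-free surface components or components with too few fixed points. Once this structural statement is in hand, the equivariant cohomology of an invariant two-sphere is a standard calculation and the rest is bookkeeping matching the Chang--Skjelbred description to Definition~\ref{eqco}.
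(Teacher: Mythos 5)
The paper does not prove this statement at all: it is quoted as Theorem 1.2.2 of \cite{GKM} and used as a black box, so there is no internal proof to compare against. Your reconstruction is the standard Chang--Skjelbred route to the GKM theorem, and it is correct. Equivariant formality makes $H^*_T(M)$ a free, hence torsion-free, $S({\mathfrak{t}}_{\mathbb{Q}}^*)$-module, so Borel localization gives injectivity of the restriction to $M^T$; the Chang--Skjelbred Lemma reduces the image computation to the sets $M^K$ for corank-one $K$; pairwise independence of the weights forces each component of $M^K$ meeting $M^T$ to be a point or a surface; Proposition~\ref{ef} and Corollary~\ref{He} rule out fixed-point-free components (in particular, possible higher-dimensional ones away from $M^T$), and orientability of fixed point sets of subtori in the orientable $M$ together with Proposition~\ref{eformal} identifies the two-dimensional components as invariant two-spheres with exactly two fixed points; finally the image of $H^*_T(S^2)$ in the two fixed-point factors is exactly the pairs congruent modulo the weight, and intersecting over all $K$ reproduces precisely the divisibility conditions of Definition~\ref{eqco}. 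The only steps that deserve explicit care are the ones you already flag: that every component of $M^K$ carries $T$-fixed points (this is where equivariant formality is genuinely used, via Corollary~\ref{He}), and the standard computation of $H^*_T(S^2)$ for a circle acting with weight $\alpha$, e.g.\ by an equivariant Mayer--Vietoris argument with two invariant disks; note also that the ambiguity of $\alpha$ up to sign is harmless for the divisibility condition. So your argument buys a self-contained proof of a result the paper only cites, at the cost of invoking the Chang--Skjelbred Lemma, which the paper does state (Lemma~\ref{CSLemma}), whereas the original reference \cite{GKM} proceeds through different (Koszul duality/localization) machinery.
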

Motivated by Proposition \ref{Ecohom} we define the cohomology of an abstract GKM graph as follows.
\begin{definition} [{\bf Cohomology of an abstract GKM Graph}] \label{defn:cohomgraph} 
The \emph{cohomology} of a GKM graph $(\Gamma,\alpha,\nabla)$ is defined as
\[
H^*(\Gamma,\alpha) = \frac{H^*_T(\Gamma,\alpha)}{S^+({\mathfrak{t}}_{\mathbb{Q}}^*)\cdot H^*_T(\Gamma,\alpha)}.
\]
\end{definition}
\begin{remark}\label{MGamma} Thus, for a GKM action of a torus $T$ on $M$, we have $H^*(M)\cong H^*(\Gamma_M)$ by Theorem \ref{thm:GKM} and Proposition \ref{Ecohom}. 
\end{remark}
\subsection{Results for GKM$_k$ manifolds in positive and non-negative curvature}
We state here two results for GKM$_k$ manifolds of positive and non-negative sectional curvature that we need for the proofs of Theorem \ref{2} and \ref{3}.
The first is a classification result for positively curved GKM$_3$ manifolds from \cite{GW1}.
\begin{theorem}\label{GKM3+}\cite{GW1}
Let $M$ be a closed, positively curved, orientable GKM$_3$ Riemannian manifold. 
Then $M$ has the real cohomology ring of a compact rank one symmetric space.
\end{theorem}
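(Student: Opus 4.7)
The plan is to pass from the manifold $M$ to its GKM$_3$ graph and reduce the problem to a combinatorial classification. By the GKM Theorem~\ref{thm:GKM} combined with Remark~\ref{MGamma}, we have $H^*(M)\cong H^*(\Gamma_M,\alpha_M)$, so it suffices to classify the possible GKM$_3$ graphs that can arise from closed, positively curved, orientable manifolds and to verify that each yields the real cohomology of a CROSS.

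\textbf{Restricting the $2$-faces.} The first substantive step is to constrain the $2$-dimensional faces of $\Gamma_M$. For each vertex $p\in V(\Gamma_M)$ and each pair of edges $e_1,e_2\in E_p$, the common kernel of the two associated weights defines a corank-$2$ subtorus $H\subset T$ whose fixed-point component $N^4$ through $p$ is a $4$-dimensional, totally geodesic, $T^2$-invariant submanifold of $M$. Being totally geodesic, $N^4$ inherits positive sectional curvature, and the restricted $T^2$-action is automatically GKM. A closed, positively curved $4$-manifold admitting an effective isometric $T^2$-action is equivariantly diffeomorphic either to $S^4$ with the suspension action or to $\mathbb{C}P^2$ with the standard action, by the classification of positively curved manifolds with torus symmetry in low dimensions (Hsiang--Kleiner, Grove--Searle). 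Consequently every $2$-face of $\Gamma_M$ has either $2$ or $3$ vertices.

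\textbf{Classifying the graphs.} The next step is a purely combinatorial classification of $n$-valent abstract GKM$_3$ graphs $(\Gamma,\alpha,\nabla)$ whose every $2$-face has at most $3$ vertices. I would proceed by induction on the valency $n$: fix a vertex $v$, analyze the $\binom{n}{2}$ two-faces through $v$, and use the canonical GKM$_3$ connection (Remark~\ref{canonical}) to parallel-transport this local combinatorial data along edges and force global coherence. Frankel's theorem---two compact totally geodesic submanifolds of complementary dimensions in a positively curved manifold must intersect---translates into forced incidence relations between faces of complementary dimensions in $\Gamma_M$, cutting down the possibilities further. The expected outcome is that $(\Gamma,\alpha,\nabla)$ is isomorphic to the GKM graph of a standard torus action on $S^{2n}$, $\mathbb{C}P^n$, $\mathbb{H}P^n$, or the Cayley plane $\mathbb{O}P^2$. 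Once the graph is identified, its cohomology $H^*(\Gamma,\alpha)$ is read off from Definitions~\ref{eqco} and~\ref{defn:cohomgraph} and matches the real cohomology of the corresponding CROSS.

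\textbf{Main obstacle.} The central difficulty is the combinatorial classification. The constraint on $2$-faces is a clean local condition, but upgrading it to a global statement requires controlling how these $2$-faces can be assembled as the valency grows, and verifying that the axial function and connection can be chosen consistently. Ruling out an ``exotic'' graph---one whose $2$-faces look CROSS-like but whose global combinatorics do not realize any CROSS---is the technical heart of the argument, and is where positive curvature (through Frankel's theorem, and Wilking's connectedness principle to guarantee enough totally geodesic submanifolds) must be combined with the GKM$_3$ connection axioms and induction on the valency. If a stubborn exotic graph survives this analysis, the fallback would be to upgrade the topological constraints by invoking Wilking's connectedness lemma on the fixed-point components $N^{2l}$ of larger corank subtori, forcing further cohomological rigidity before returning to the graph argument.
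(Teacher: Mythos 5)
This theorem is not proved in the paper at all: it is quoted verbatim from \cite{GW1}, and the paper only remarks that the proof there proceeds ``via a classification of all possible GKM$_3$ graphs and by applying the GKM theorem.'' Your outline is essentially a reconstruction of that strategy, and its first half is sound: passing to the graph via Theorem \ref{thm:GKM} and Remark \ref{MGamma}, and constraining the $2$-faces by noting that each pair of edges at a vertex spans a $4$-dimensional totally geodesic, positively curved submanifold $N^4$ with an induced equivariantly formal torus action, so that the number of vertices of the corresponding $2$-face equals the number of $T$-fixed points in $N^4$, which is at most $3$. (Here you claim more than you need or than the cited results give: Hsiang--Kleiner yields a homeomorphism classification and a bound $\chi(N^4)\leq 3$, not an equivariant diffeomorphism to a standard action, but only the fixed-point count enters, so this is a cosmetic overstatement rather than an error.)

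The genuine gap is the second half. The entire content of \cite{GW1} beyond the face restriction is the classification of abstract GKM$_3$ graphs all of whose $2$-faces are biangles or triangles, together with the computation that their (equivariant) cohomology is that of $S^{2n}$, $\mathbb{C}P^n$, $\mathbb{H}P^n$ or the Cayley plane; you state this only as an ``expected outcome'' and propose to obtain it by induction on the valency combined with Frankel's theorem and Wilking's connectedness principle ``translated into forced incidence relations between faces.'' That translation is not available at the level you invoke it: once you have reduced to the abstract graph $(\Gamma,\alpha,\nabla)$, Frankel and connectedness are statements about totally geodesic submanifolds of $M$, not about arbitrary faces of an abstract GKM$_3$ graph, and the argument in \cite{GW1} does not use them at this stage -- it is a purely combinatorial analysis, organized around how many of the $2$-faces through a fixed vertex are biangles versus triangles, using the canonical GKM$_3$ connection to propagate this structure and then computing $H_T^*(\Gamma,\alpha)$ directly. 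Since this combinatorial classification is exactly the technical heart of the theorem and your proposal neither carries it out nor gives a mechanism that would plausibly replace it, the proposal as written is a correct opening reduction followed by a missing main argument.
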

\noindent The second  is a classification result  for non-negatively curved GKM$_4$ manifolds from \cite{GW2}.

\begin{theorem}\label{GKM40}\cite{GW2} Let $M$ be a non-negatively curved GKM$_4$ manifold. Then $$H^*(M)\cong H^*(\widetilde{M}/G),$$
where $\widetilde{M}$ is a simply-connected, non-negatively curved torus manifold and $G$ is a finite group acting isometrically on $\widetilde{M}$.
\end{theorem}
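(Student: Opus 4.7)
\textbf{Plan for proving Theorem \ref{GKM40}.} The strategy is to pass to the universal cover, identify it as a torus manifold, and then note that $M$ is obtained as a finite quotient. First I would show that $\pi_1(M)$ is finite, so that the universal cover $\widetilde M$ is closed: this uses non-negative curvature together with the existence of isolated $T$-fixed points (the fundamental group acts freely and isometrically on $\widetilde M$, so a Synge-type/Frankel argument applied at a fixed point bounds its size). By Theorem \ref{lift}, since $T$ fixes points on $M$, the action lifts to $\widetilde M$ along the identity covering of $T$, with $\widetilde M^T$ the full inverse image of $M^T$. The deck group $G := \pi_1(M)$ is finite, acts freely and isometrically on $\widetilde M$, and commutes with $T$; the isotropy representation at each $\widetilde p\in \widetilde M^T$ equals that at its image in $M$, so the lifted $T$-action on $\widetilde M$ is again equivariantly formal (via Proposition \ref{eformal} applied to $\widetilde M^T$) and GKM$_4$.

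Next I would upgrade $\widetilde M$ to a torus manifold. The GKM$_4$ condition gives, at each fixed point, $n$ pairwise linearly independent weights any three of which are linearly independent, and the faces $\Gamma_N \subset \Gamma_{\widetilde M}$ of dimension $\leq 3$ are realized by closed $T$-invariant submanifolds. The key step is to produce an effective action of a torus $T'$ of rank $n = \dim \widetilde M / 2$ extending the $T$-action and fixing a point. For this I would argue face-by-face: the $4$-dimensional faces of the GKM$_4$ graph correspond to closed, orientable, non-negatively curved $4$-manifolds with a GKM$_2$ torus action; combined with the Chang-Skjelbred description of $H^*_T$ and known classifications of such $4$-manifolds (essentially $S^4$, $\mathbb{C}P^2$, $S^2\times S^2$ and the $\mathbb{C}P^2\#\pm\mathbb{C}P^2$'s), one sees that the axial function image generates a rank-$n$ sublattice of $\mathfrak t^*_{\mathbb Q}$, so that the effective torus rank along each orbit can be promoted. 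Patching using the canonical connection (Remark \ref{canonical}) of the GKM$_3$ subgraph then extends the action globally to the desired $T'$, giving $\widetilde M$ the structure of a simply-connected, non-negatively curved torus manifold.

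With this in hand, the theorem is essentially tautological: $M = \widetilde M/G$ and the isometric $G$-action gives $H^*(M;\mathbb Q) \cong H^*(\widetilde M/G;\mathbb Q)$ directly (over $\mathbb Q$ this further equals $H^*(\widetilde M;\mathbb Q)^G$, but only the displayed form is needed). The main obstacle is the middle step: extending the effective rank from the given torus $T$ to a torus of rank $n$. This is not a formal consequence of GKM$_4$ alone and relies crucially on non-negative curvature to rule out $2$-faces with more than four vertices (cf.\ Theorem \ref{graphs} in the excerpt) and on the $4$-dimensional classification to see that the axial function's image is saturated. The rest of the argument -- the lift to the universal cover, equivariant formality, and the descent to $M$ -- are essentially bookkeeping applications of the tools already assembled in Section \ref{s2}.
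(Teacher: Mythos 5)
This theorem is not proved in the paper at all: it is quoted verbatim from \cite{GW2} (Theorem 1.4 there), so the comparison has to be with the argument in that reference, whose method the present paper sketches when it applies the result (small three-dimensional faces, graph coverings, the GKM theorem). Your proposal does not follow that route, and the route you propose has a genuine gap at exactly the step you flag as ``the key step.'' In \cite{GW2} the manifold $\widetilde{M}$ is \emph{not} the Riemannian universal cover of $M$: the proof is combinatorial. One shows that non-negative curvature forces the two-dimensional faces of $\Gamma_M$ to have at most four vertices, hence the GKM$_4$ graph has small three-dimensional faces; such a graph is then shown to be finitely covered, \emph{as an abstract GKM graph}, by the GKM graph of a simply-connected non-negatively curved torus manifold $\widetilde{M}$, with a finite group $G$ acting; applying the GKM theorem on both sides identifies $H^*_T(M)$ with the invariant part and yields the ring isomorphism $H^*(M)\cong H^*(\widetilde{M}/G)$. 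The conclusion is only a cohomology isomorphism with an auxiliary torus manifold quotient; it is never claimed that $M$ itself is diffeomorphic to such a quotient.

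Your plan, by contrast, takes $\widetilde{M}$ to be the universal cover of $M$ and $G=\pi_1(M)$, and then tries to ``upgrade'' the lifted $T$-action to an effective action of a torus of rank $n=\dim M/2$ fixing a point. Nothing in the GKM$_4$ hypothesis, the curvature assumption, or the graph combinatorics produces additional isometries: the axial function and the canonical connection are cohomological/combinatorial data and cannot be ``patched'' into a genuinely larger torus action on the manifold. A GKM$_4$ manifold typically has symmetry rank far below $n$ (the defining torus need only have rank $\geq 4$), so the asserted extension to a half-dimensional torus is false in general, and with it the identification $M=\widetilde{M}/G$ with $\widetilde{M}$ a torus manifold — a statement strictly stronger than the theorem. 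A secondary point: your finiteness argument for $\pi_1(M)$ via a ``Synge-type/Frankel'' argument does not work in non-negative curvature; the correct observation is that equivariant formality with isolated fixed points gives $\chi(M)=|M^T|>0$, which together with the Cheeger--Gromoll splitting (Theorem \ref{CG}) rules out a Euclidean factor in the universal cover. But even with that repaired, the central step of your proposal cannot be carried out, so the approach as a whole does not prove the theorem.
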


\subsection{Odd-dimensional GKM theory}\label{sec:odddimGKM}
GKM theory was generalized to torus actions on odd-dimensional manifolds with one-dimensional fixed point set in \cite{H}.  The odd GKM condition is an odd-dimensional analogue of the even-dimensional GKM condition. We describe the corresponding theory here.

\begin{definition}[{\bf Odd GKM Torus Action and Manifold}]\label{oddGKM}
We say that the action of a torus $T$ on an orientable, compact, connected manifold $M^{2n+1}$ is \emph{odd GKM} if
\begin{enumerate}
\item the fixed point set $M^T$ of the action is a finite union of 
circles;
\item for every $p\in M^T$, the weights $\alpha_{i, p}\in {{\mathfrak{t}}_{\mathbb{Q}}}^*/\{\pm 1\}, i=1, \hdots, n,$ of the isotropy representation of $T$ on $T_pM$ are pairwise linearly independent; and
\item it is equivariantly formal.
\end{enumerate}
\end{definition}

\begin{remark}
Condition $3$ is equivalent to requiring the \emph{$1$-skeleton} 
$M_1 = \{p\in M\mid \dim\, (T \cdot p) \leq 1\}$ to be a finite union of $3$-dimensional $T$-invariant submanifolds.

Recall that for GKM manifolds, Condition $2$ of   Definition \ref{evenGKM} implies that up to diffeomorphism, there is only one $2$-dimensional $T$-invariant submanifold, $S^2$,  and  that $S^2\cap M^T$ consists of exactly $2$ isolated points.
In contrast, here there are an infinite number of possible $T$-invariant $3$-manifolds that could occur, see Section $4$ of \cite{H} for a complete classification of such $3$-manifolds. 

\end{remark}

Note that in \cite{H}, it is neither assumed that the action is equivariantly formal, nor that $M$ is orientable. We include these conditions because we do so in the even-dimensional setting, and as mentioned earlier, both are often part of the definition of a GKM action.

\begin{remark} One should note that in \cite{GNT} a different generalization of GKM theory to odd dimensions was introduced for so-called Cohen-Macaulay actions. The GKM-type actions considered there are more general than those of \cite{H}, as they do not necessarily have fixed points. That is, using the stratification induced by the $M_k$ skeleta, one only has that $M_l\neq\emptyset$ for some $l\geq 0$, rather than $M_0=M^T\neq\emptyset$.  On the other hand, the definition from \cite{GNT} is also more restrictive in terms of the stratification of the $k$-skeleta. 
Namely, given $N$, a connected component of $M_{l+1}\setminus M_l$, with $M_l\neq \emptyset$,  $N$ contains exactly two components of $M_l$. By contrast, in the definition given in \cite{H}, the number of such components is greater than or equal to $1$.
\end{remark}

As in classical GKM theory one can associate to an odd GKM manifold a  \emph{geometric odd GKM graph}, from which one can, for equivariantly formal actions, compute the equivariant, as well as the ordinary rational cohomology of the manifold.

 To encode the structure of the $1$-skeleton in a graph,  two types of vertices are defined in \cite{H}. We will decorate our graphs with a bar to distinguish them from even-dimensional GKM graphs.

\begin{definition}[{\bf Vertex types}]\label{vertex}
The graph, $\bar{\Gamma}_M$, of an odd GKM manifold $M$ has two types of vertices:
\begin{enumerate}
\item  One \emph{circle} for each circle in the fixed point set; and 
\item One \emph{square} for each invariant three-dimensional submanifold in $M_1$, the $1$-skeleton of $M$. 
\end{enumerate}
We denote the set of circles by $V_{\scaleto{{\scaleto{\circ}{4pt}}}{4pt}}$ and the set of squares by $V_{\scaleto{\square}{4pt}}$. 

\end{definition}

We also have restrictions on how edges are formed and distinguish between two particular types.\begin{definition} [{\bf Edge and Edge types}]  \label{edge}
We connect a circle to a square by an \emph{edge} if the fixed circle is contained in the corresponding three-dimensional submanifold. Further, at any circle in the graph, $\bar{\Gamma}_M$, we distinguish between the following edge types:

\begin{enumerate}
\item a \emph{floating edge}, that is, an edge connecting to a square of  valence $1$,  and 
\item a \emph{grounded edge}, that is, an edge connecting to a square of valence $\geq 2$. 
\end{enumerate}

\end{definition}

We further distinguish the following important subsets of $V_{\scaleto{{\scaleto{\circ}{4pt}}}{4pt}}$ and $V_{\scaleto{\square}{4pt}}$, namely, we denote by $V_{\scaleto{{\scaleto{\circ}{4pt}}}{4pt}}(s)$ the set of circles connected to $s\in V_{\scaleto{\square}{4pt}}$, and  by $V_{\scaleto{\square}{4pt}}(c)$ the set of squares connected to $c\in V_{\scaleto{{\scaleto{\circ}{4pt}}}{4pt}}$. 

In analogy with how weights are assigned to  the  GKM$_k$ graph, we now assign weights to square vertices rather than edges.

\begin{definition}[{\bf Weight function}]\label{weight} The {\em weight function} $\alpha_M:V_{\scaleto{\square}{4pt}}\to {\mathfrak{t}}_{\mathbb{Q}}^*/\{\pm 1\}$ assigns to each square $s$ of  $\bar{\Gamma}_M$,  a  weight, $\alpha_M(s)$, which is 
the weight 
of the isotropy representation at any fixed circle in the three-dimensional submanifold corresponding to $s$, considered as an element of 
${\frak{t}}_{\mathbb{Q}}^*/\{\pm 1\}$.

\end{definition}

\noindent Note that by definition, any edge connects a circle to a square. 

We can also introduce a notion of connection on such a graph, as in the even-dimensional setting. The only difference is that we do not specify a single edge along which we transport, but two circles in the same three-dimensional component of the $1$-skeleton.
\begin{definition}\label{GKMgraph}
A  {\em connection on the  graph, $\bar{\Gamma}_M$, of an odd-dimensional GKM manifold} $M$ is a collection of maps $(\bar{\nabla}_M)_{c_1,c_2,s_0}:V_{\scaleto{\square}{4pt}}(c_1)\to V_{\scaleto{\square}{4pt}}(c_2)$, for every $s_0\in V_{\scaleto{\square}{4pt}}$ and $c_1,c_2\in V_{\scaleto{{\scaleto{\circ}{4pt}}}{4pt}}(s_0)$,  satisfying the following conditions:
\begin{enumerate}
\item $(\bar{\nabla}_M)_{c_1,c_2,s_0}(s_0) = s_0$
\item $(\bar{\nabla}_M)_{c_2,c_1,s_0} = (\bar{\nabla}_M)_{c_1,c_2,s_0}^{-1}$
\item For every $s\in V_{\scaleto{\square}{4pt}}(c_1)$, there exists a constant $c\in {\mathbb{Z}}$ such that $$\bar{\alpha}_M((\bar{\nabla}_M)_{c_1,c_2,s_0}(s)) = \pm \bar{\alpha}_M(s) + c \, \bar{\alpha}_M(s_0).$$
\end{enumerate}
\end{definition}

The following proposition guarantees the existence of a connection on the  graph of every odd-dimensional GKM manifold. 
The proof  is completely analogous to the proof of Proposition 2.3  in \cite{GW2}, or the proof on page 5 of \cite{GZ}.

\begin{proposition}\label{lem:odddimconnection}
There exists a  connection on the   graph, $\bar{\Gamma}_M$, of every odd-dimensional GKM manifold.
\end{proposition}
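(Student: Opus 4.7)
The plan is to mirror the standard construction of a connection on the GKM graph of an even-dimensional GKM manifold (cf.\ \cite[Prop.~2.3]{GW2} or \cite[p.~5]{GZ}), using the normal bundle of the $T$-invariant three-dimensional submanifold corresponding to the square $s_0$ in place of the normal bundle of an invariant $S^2$. Fix $s_0\in V_{\scaleto{\square}{4pt}}$, let $N\subset M$ be the corresponding $T$-invariant $3$-manifold, and let $H\subset T$ be the codimension-one subtorus with Lie algebra $\ker\alpha_M(s_0)$, so that $N$ is a connected component of $M^H$. The normal bundle $\nu$ of $N$ in $M$ is a real rank-$(2n-2)$ $T$-equivariant vector bundle on which $H$ acts fibrewise; decomposing each fibre into $H$-isotypical components yields a global splitting $\nu=\bigoplus_\beta\nu_\beta$ into $T$-equivariant subbundles, indexed by the nonzero $H$-characters $\beta$ (taken modulo sign).

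For any $c_1,c_2\in V_{\scaleto{{\scaleto{\circ}{4pt}}}{4pt}}(s_0)$ with corresponding fixed circles $C_1,C_2\subset N$, restricting $\nu_\beta$ to $C_i$ and further decomposing by $T$-weight splits $\nu_\beta|_{C_i}$ into real rank-$2$ subbundles whose $T$-weights are pairwise distinct (by the odd GKM condition) and all lift $\pm\beta$. Together with the tangential weight $\alpha_M(s_0)$, these weights exhaust the isotropy representation at $C_i$, so the rank-$2$ summands of $\nu_\beta|_{C_i}$ are in canonical bijection with those squares in $V_{\scaleto{\square}{4pt}}(c_i)\setminus\{s_0\}$ whose weight restricts to $\pm\beta$. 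Since $\nu_\beta$ has constant rank over the connected manifold $N$, the numbers of rank-$2$ summands at $C_1$ and at $C_2$ agree. For each $\beta$ we then choose an arbitrary bijection between these two collections, combine them and extend by $s_0\mapsto s_0$ to obtain a map $(\bar{\nabla}_M)_{c_1,c_2,s_0}\colon V_{\scaleto{\square}{4pt}}(c_1)\to V_{\scaleto{\square}{4pt}}(c_2)$, and set $(\bar{\nabla}_M)_{c_2,c_1,s_0}$ equal to its inverse.

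It then remains to verify the three conditions in Definition~\ref{GKMgraph}. Conditions $(1)$ and $(2)$ hold by construction. For condition $(3)$, if $s\in V_{\scaleto{\square}{4pt}}(c_1)$ is mapped to $s'\in V_{\scaleto{\square}{4pt}}(c_2)$, then by construction $\alpha_M(s)$ and $\alpha_M(s')$ have the same image $\pm\beta$ under restriction to $H$; hence $\alpha_M(s')\mp\alpha_M(s)$ lies in the kernel of the restriction map $\mathfrak{t}_{\mathbb{Q}}^*\to\mathfrak{h}_{\mathbb{Q}}^*$, which is spanned by $\alpha_M(s_0)$, and integrality of the weights delivers the required integer $c$. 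The case $s=s_0$ is trivial with $c=0$.

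The conceptual point worth highlighting is that, in contrast to the even-dimensional setting where Segal's theorem provides a canonical splitting of a $T$-equivariant complex vector bundle over $\mathbb{C}P^1$ into line bundles, no such canonical splitting of $\nu$ as a \emph{global} bundle over the $3$-manifold $N$ is available, and this is the main obstacle one might expect. It is circumvented by noting that Definition~\ref{GKMgraph} only requires the existence of \emph{some} connection, so that one can work purely at the discrete level of isotropy weights, where the sole non-trivial input is the equality of ranks of the isotypical pieces $\nu_\beta|_{C_1}$ and $\nu_\beta|_{C_2}$.
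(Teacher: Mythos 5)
Your construction follows the same skeleton as the even-dimensional argument of \cite{GW2} and \cite{GZ} that the paper itself invokes (the paper gives no details beyond this citation), and everything up to the rank count of the isotypical pieces is fine. The genuine gap is in the final step, where condition $(3)$ of Definition \ref{GKMgraph} demands an \emph{integer} constant $c$. Knowing only that $\bar{\alpha}_M(s')\mp\bar{\alpha}_M(s)$ restricts to zero on $\mathfrak{h}$ tells you that this difference is an integral weight lying on the rational line $\mathbb{Q}\,\bar{\alpha}_M(s_0)$, i.e.\ an integer multiple of the \emph{primitive} generator of that line; it is an integer multiple of $\bar{\alpha}_M(s_0)$ itself only when $\bar{\alpha}_M(s_0)$ is primitive, which the odd GKM hypotheses do not guarantee (for instance, $T^2$ acting on $S^7\subset\mathbb{C}^4$ with weights $(2,0)$, $(0,1)$, $(1,1)$ on three coordinates and trivially on the fourth is an effective odd GKM action with the non-primitive weight $(2,0)$). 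The problem is compounded by the fact that you allow an \emph{arbitrary} bijection inside each $H$-isotypical block: with $\bar{\alpha}_M(s_0)=(2,0)$ you could pair a square of weight $(0,1)$ at $c_1$ with one of weight $(1,1)$ at $c_2$, since these agree on $H$, yet neither $(1,1)-(0,1)$ nor $(1,1)+(0,1)$ is an integer multiple of $(2,0)$; so the map you build can genuinely violate condition $(3)$, and the sentence ``integrality of the weights delivers the required integer $c$'' does not hold as stated.

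The repair is to work with the full, possibly disconnected, kernel $K=\ker\bigl(\bar{\alpha}_M(s_0)\colon T\to S^1\bigr)$ instead of its identity component $H$. Linearizing at a point $p$ of the fixed circle $C_1$, the elements of $K$ fix the tangent line to $C_1$ and the $\bar{\alpha}_M(s_0)$-weight plane, hence fix a neighborhood of $p$ in $N$; since $K$ preserves $N$ and acts on it by isometries of the connected manifold $N$ (for an invariant metric), it therefore fixes $N$ pointwise. Consequently $K$ acts fibrewise on $\nu$, and you may decompose $\nu$ into $K$-isotypical subbundles and choose your bijections within those finer blocks; the rank-constancy argument goes through verbatim. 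Then the weights of matched squares agree as characters of $K$, so their signed difference is pulled back from a character of $T/K\cong S^1$, i.e.\ it is an honest integer multiple of $\bar{\alpha}_M(s_0)$, which is exactly what condition $(3)$ requires. With this modification your proof is correct and is essentially the argument of Proposition 2.3 of \cite{GW2} (respectively page 5 of \cite{GZ}) to which the paper appeals, transplanted from the invariant two-spheres to the three-dimensional components of the one-skeleton.
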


 With these notions, we now define a {\em  geometric odd GKM graph}.

\begin{definition}[{\bf Geometric Odd GKM graph}] Let $M^{2n+1}$ be an odd GKM manifold. We define a {\em geometric odd GKM graph}, $(\bar{\Gamma}_M, \bar{\alpha}_M, \bar{\nabla}_M)$,  where $\bar{\Gamma}_M$ is the graph obtained from the $1$-skeleton of $M$, with vertices, edges and weight function, $\bar{\alpha}_M$, and a connection, $ \bar{\nabla}_M$, as described above.
\end{definition}

\begin{remark} For odd GKM graphs whose squares have valence $1$ or $2$ only, we will use the following notational shortcut. Namely, we will denote each $2$-valent square $s\in V_{\scaleto{\square}{4pt}}$, by $s_{ij}$ where $c_i,c_j\in V_{\scaleto{{\scaleto{\circ}{4pt}}}{4pt}}(s)$ are the unique circle vertices connecting to $s$. . 
In particular, in analogy with the orientation assigned to edges in the even-dimensional case, this allows us to assign an orientation to a $2$-valent square, namely we let  $\bar{s}_{ji}=s_{ij}$. 
\end{remark}

\begin{example}\label{ex:pinwheel} The  geometric odd GKM graph of a $(2n+1)$-dimensional sphere $S^{2n+1}\subset \ccc^{n+1}$ with the standard $T^n$-action induced by the standard representation on $n$ of the $n+1$ summands is a pinwheel with $n$ edges terminating in squares, corresponding to fixed $3$-spheres, as follows: 

\hskip .2cm
\begin{center}
\begin{tikzpicture}[square/.style={regular polygon,regular polygon sides=4}]

\draw[line width=.2mm] (0,0) -- (0,2);
\draw[line width=.2mm] (0,0) -- (1.44,1.44);
\draw[line width=.2mm] (0,0) -- (2,0);
\draw[line width=.2mm] (0,0) -- (1.44,-1.44);
\node at (0,0)[circle,fill,inner sep=2pt]{};
\node at (0,2)[square,fill,inner sep=2pt]{};
\node at (1.44,1.44)[square,fill,inner sep=2pt]{};
\node at (2,0)[square,fill,inner sep=2pt]{};
\node at (1.44,-1.44)[square,fill,inner sep=2pt]{};

\node at (-.9,.2)[circle, fill, inner sep = .5pt]{};
\node at (-0.95,-.25)[circle, fill, inner sep = 0.5pt]{};
\node at (-.8,-0.6)[circle, fill, inner sep = 0.5pt]{};
\node at (-.45,-0.8)[circle, fill, inner sep = 0.5pt]{};

\end{tikzpicture}
\end{center}

\end{example}
In the following proposition we collect a few properties of odd GKM graphs.

\begin{proposition}\label{props} Let $M^{2n+1}$ be an odd GKM manifold, then the following hold:
\begin{enumerate}
\item The  geometric odd GKM graph is connected
\item Each circle in the geometric odd GKM graph has valence $n$.

\item If the total Betti number of $M^{2n+1}$ is $2m$, then there are exactly $m$ circles in the graph. Moreover, 
each square in the  odd GKM graph has valence bounded between $1$ and $m$. 
\end{enumerate}
\end{proposition}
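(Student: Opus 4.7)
The plan is to establish the three claims in order of increasing subtlety. Parts (2) and (3) are essentially formal consequences of the odd GKM condition together with Proposition \ref{eformal} and Corollary \ref{He}, while (1) requires a brief Chang--Skjelbred argument.

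For (2), I would fix a circle vertex $c \in V_{\scaleto{{\scaleto{\circ}{4pt}}}{4pt}}$ and let $p$ be any point of the corresponding fixed circle. At $p$ the tangent space decomposes as $T_pM = T_pc \oplus \bigoplus_{i=1}^n V_{\alpha_i}$, where $\alpha_1,\ldots,\alpha_n \in \mathfrak{t}^*_{\mathbb{Q}}/\{\pm 1\}$ are the isotropy weights, pairwise linearly independent by the odd GKM condition. For each $i$, the subtorus $K_i := \ker \alpha_i$ has corank one, and since only $\alpha_i$ among the weights vanishes on $K_i$, the connected component of $M^{K_i}$ containing $c$ has tangent space $T_pc \oplus V_{\alpha_i}$ at $p$, hence is three-dimensional. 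Pairwise distinctness of the $K_i$ gives pairwise distinct three-manifolds, so exactly $n$ edges emanate from $c$.

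For (3), by equivariant formality and Proposition \ref{eformal}, $\dim H^*(M^T) = \dim H^*(M) = 2m$; since $M^T$ is a disjoint union of $k$ circles, $\dim H^*(M^T) = 2k$, so $k = m$. The valence of a square $s \in V_{\scaleto{\square}{4pt}}$ equals the number of fixed circles contained in its associated three-manifold $N_s$; this is at least one by Corollary \ref{He} applied to the corank-one subtorus whose fixed-set component is $N_s$, and at most $m$ since $M$ contains only $m$ fixed circles in all.

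For (1), I would run a degree-zero Chang--Skjelbred argument. Since $M$ is connected, $H^0_T(M) = H^0(M) = \mathbb{Q}$. By Lemma \ref{CSLemma}, this equals the image of the restriction $H^0_T(M_1) \to H^0_T(M^T)$, so that image is one-dimensional. Moreover, this restriction is injective in degree zero: any locally constant function on $M_1$ vanishing on $M^T$ must vanish identically, because by Corollary \ref{He} every connected component of $M_1$ meets $M^T$. Hence $H^0(M_1) = H^0_T(M_1) = \mathbb{Q}$, that is, $M_1$ is connected. Combined with Corollary \ref{He}, which guarantees that no square vertex is isolated in $\bar{\Gamma}_M$, this transfers connectedness of $M_1$ to connectedness of the graph. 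The main obstacle is part (1): once the Chang--Skjelbred step is in place, the remaining assertions follow routinely.
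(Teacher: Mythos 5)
Your proposal is correct and follows essentially the same route as the paper: part (1) via the degree-zero Chang--Skjelbred argument (with Corollary \ref{He} supplying the injectivity the paper leaves implicit), part (2) by counting the $n$ pairwise independent isotropy weights on the normal space of a fixed circle, and part (3) from Proposition \ref{eformal} together with the fact that each three-manifold $N_s$ contains between $1$ and $m$ fixed circles. The extra detail you give (the tangent-space decomposition at $p$ and the explicit distinctness of the corank-one subtori) is a fleshed-out version of the paper's argument, not a different method.
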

\begin{proof} We include a proof for the sake of completeness, noting that the proof follows along the same lines as in the even-dimensional case.  To prove Part 1, we note 
the Chang-Skjelbred Lemma \ref{CSLemma} states that for an equivariantly formal action, the image of the injective map $H^*_T(M)\to H^*_T(M^T)$ is the same as the image of the map $H^*_T(M_1) \to H^*_T(M^T)$. As $M$ is connected, it follows that the image of $H^0(M_1) \cong H^0_T(M_1) \to H^0_T(M^T)\cong H^0(M^T)$ is one-dimensional, which implies that $M_1$ is connected. This is equivalent to the GKM graph being connected.

To prove Part 2,  recall that any edge emanating from a circle corresponds to a weight of the isotropy representation at that circle. Because the codimension of this circle is $2n$, and $T$ acts on the normal space of this circle without fixed vectors, there are precisely $n$ such weights.

To prove Part 3, it follows by Proposition \ref{eformal} that the equivariant formality of the action is equivalent to the equality of total Betti numbers $\dim\, H^*(M) = \dim\, H^*(M^T)$. Because any circle in $M^T$ contributes $2$ to the total Betti number of $M^T$, and 
since any square must contain a circle fixed by $T$ and can contain at most $m$ circles, the result follows.
\end{proof}

We are now in a position to define an odd GKM$_k$ manifold.
\begin{definition}[{\bf Odd GKM$_k$ manifolds}]
An odd-dimensional GKM manifold is called {\em odd GKM$_k$}, for $k\geq 2$, if the following hold.
\begin{enumerate} 
\item $M$ is odd-dimensional GKM, and;
\item At any fixed circle, any $k$ weights of the isotropy representation are linearly independent.
\end{enumerate}
\end{definition} 
Thus, odd GKM manifolds are the same as odd GKM$_2$ manifolds. In the same way that  a geometric odd GKM graph is associated to an odd GKM manifold, we obtain an odd geometric GKM$_k$ graph from an odd GKM$_k$ manifold. 
 Note that for geometric odd GKM$_3$ graphs, for every $s_0\in V_{\scaleto{\square}{4pt}}$, given $c_1,c_2\in V_{\scaleto{{\scaleto{\circ}{4pt}}}{4pt}}(s_0)$, the condition $\bar{\alpha}_M((\bar{\nabla}_M)_{c_1,c_2,s_0}(s)) = \pm \bar{\alpha}_M(s) + c \bar{\alpha}_M(s_0)$ alone uniquely determines the square $(\bar{\nabla}_M)_{c_1,c_2,s_0}(s)$, for all $s\in V_{\scaleto{\square}{4pt}}(c_1)$, that is, the connection is unique.
 Since the connection for a geometric odd GKM$_k$ graph is canonical for $k\geq 3$, we will denote 
such graphs simply by  $(\bar{\Gamma}_M, \bar{\alpha}_M)$.

For a GKM$_k$ manifold $M$, and any $k-1$ weights at a fixed circle, there is a unique ($2k-1$)-dimensional submanifold fixed by a codimension $k-1$ subtorus generated by the intersection of the kernels of the $k-1$ weights. We will denote this submanifold by $N_{T}^{2k-1}$. As in the even-dimensional case, we make the following definition of a face.
\begin{definition}[{\bf Face}]
We call the subgraph of the GKM$_k$ graph of $M$ corresponding to $N_{T}^{2k-1}$ a \emph{($k-1$)-face} of the graph.
\end{definition}

Theorem 4.6 in \cite{H} tells us how the GKM graph encodes the equivariant cohomology, whose relevant content we recall here. 
We choose an orientation on every component of $M^T$, which then allows us to identify its cohomology canonically with $H^*(S^1) = \qqq[\theta]/(\theta^2)$. The inclusion $M^T\to M$ induces an injection
\[
H^*_T(M) \longrightarrow H^*_T(M^T) =\bigoplus_{C\in V_{\scaleto{{\scaleto{\circ}{4pt}}}{4pt}}} S({\mathfrak{t}}_{\mathbb{Q}}^*)\otimes H^*(S^1),
\]
and the image of this map is described by the following divisibility relations.  For any $s\in V_{{\scaleto{\square}{4pt}}}$, let $N_s$  be the $3$-dimensional connected submanifold fixed by the subtorus with Lie algebra $\ker \alpha(s)$. Then, for  $c_1,\ldots,c_l\in V_{\scaleto{{\scaleto{\circ}{4pt}}}{4pt}}$, the circles contained in $N_s$, we have
 $$
(P_c+Q_c\theta)_{c\in V_{\scaleto{{\scaleto{\circ}{4pt}}}{4pt}}}\in \bigoplus_{c\in V_{\scaleto{{\scaleto{\circ}{4pt}}}{4pt}}} S({\mathfrak{t}}_{\mathbb{Q}}^*)\otimes H^*(S^1),
$$
where $P_c, Q_c\in S({\mathfrak{t}}_{\mathbb{Q}}^*)$, satisfies
 \begin{equation}\label{D2}
P_{c_1}\equiv \cdots \equiv P_{c_l}\, \text{ mod } \alpha(s)
\end{equation}
and
 \begin{equation}\label{D3}
\sum_{i=1}^l \pm Q_{c_i} \equiv 0\, \text{ mod }\alpha(s).
\end{equation}
Here, the $\pm$ signs in the sum are determined as follows.  Recall that for a closed manifold $M$, the fixed point sets of torus actions are closed submanifolds that are orientable if $M$ is. Thus $N_s$ is orientable, and so the orbit space $N_s/T$ is orientable as a topological manifold (with boundary), as well.  The circles $c_i$ are boundary components of $N_s/T$, and if 
the pre-chosen orientation on each $c_i$ coincides with induced boundary orientation, with respect to any orientation of $N_s/T$, then 
the sign of $Q_{c_i}$ is $+$, and if not, then its sign is $-$.

\begin{remark}
It is not possible in general to consistently orient all components of $M^T$ in such a way that for all $N_s$ we find an orientation on $N_s/T$  with the property that the circles in $N_s$ carry the induced boundary orientation. Consider, for example, $S^1\times \ccc P^2$ with the standard $T^2$ product action which is trivial on the first factor. \end{remark}

\subsection{Geometric results in the presence of a lower curvature bound}

We now recall some general results about $G$-manifolds with non-negative curvature which we use throughout.

We recall the classification of closed, non-negatively curved $T^1$-fixed point homogeneous manifolds due to Galaz-Garc\'ia \cite{GG}.
\begin{theorem}\label{GG}\cite{GG} Let $M^3$ be a closed, non-negatively curved $T^1$-fixed point homogeneous Riemannian manifold. Then $M$ is diffeomorphic to one of 
$S^3$, $L_{p, q}$, $S^2\times S^1$, $S^2\tilde{\times} S^1$, the non-trivial $S^2$-bundle over $S^1$, $\rrr P^2\times S^1$, or $\rrr P^3\# \rrr P^3$. 

Moreover, an analysis of the isometric circle action yields the following.

\begin{enumerate}
\item If $M^3$ has  total Betti number equal to $2$, the isometric circle action fixes one circle; and 
\item If $M^3$ has  total Betti number equal to $4$, the isometric circle action fixes two circles.
\end{enumerate}
\end{theorem}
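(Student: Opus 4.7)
The plan is to analyze the orbit space $M^3/T^1$, which is a $2$-dimensional Alexandrov space of non-negative curvature. Because the action is $T^1$-fixed point homogeneous, $\dim(M^{T^1}) = 1$, so $M^{T^1}$ is a finite disjoint union of circles whose images lie in $\partial(M^3/T^1)$; in particular, the orbit space has non-empty boundary. First I would invoke the classification of non-negatively curved $2$-dimensional Alexandrov spaces with boundary: up to homeomorphism, $M^3/T^1$ is a disk $D^2$, a cylinder $S^1 \times I$, or a M\"obius band. The boundary decomposes into arcs arising from fixed circles together with isolated points or arcs arising from exceptional orbits of finite cyclic isotropy.

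Next, for each topological type of orbit space, I would use the classical theory of effective $S^1$-actions on closed $3$-manifolds (Seifert, Raymond, Orlik) to reconstruct $M$ up to diffeomorphism from the orbit datum. A disk orbit space with entirely fixed boundary recovers $M \cong S^3$; allowing one or two points of cyclic isotropy on the boundary produces the lens spaces $L_{p,q}$. A cylinder orbit space with both boundary components fixed yields $S^2 \times S^1$ or, depending on the orientation twist of the associated normal bundle, the non-trivial $S^2$-bundle $S^2 \tilde{\times} S^1$; other configurations on the cylinder boundary account for $\rrr P^3 \# \rrr P^3$. Finally, a M\"obius band orbit space with fixed boundary produces $\rrr P^2 \times S^1$.

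For the \emph{Moreover} statement, I would argue that in each of the cases above the circle action is equivariantly formal, equivalently by Proposition~\ref{eformal}, that $\dim H^*(M^{T^1}) = \dim H^*(M)$. Since each fixed circle contributes $2$ to $\dim H^*(M^{T^1})$, a total Betti number of $2$ on $M$ forces exactly one fixed circle, while a total Betti number of $4$ forces exactly two. Equivariant formality in these cases can be verified case by case from the explicit model of the action, or by using that the $T^1$-action extends over a $1$-skeleton with sufficiently simple isotropy structure.

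The main obstacle is carrying out the case analysis that converts the geometric Alexandrov restrictions on $\partial(M^3/T^1)$ into precise Seifert invariants and the resulting diffeomorphism type for $M$; distinguishing $S^2 \times S^1$ from $S^2 \tilde{\times} S^1$ and correctly identifying $\rrr P^3 \# \rrr P^3$ requires careful bookkeeping of the gluing data along the boundary of the orbit space. Non-negative curvature enters principally by restricting the orbit space to the three listed surfaces and by ruling out orbit-space configurations that would produce more elaborate $3$-manifolds.
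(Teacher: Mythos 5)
Two preliminary remarks. The paper itself does not prove this statement: the diffeomorphism classification is imported from \cite{GG}, and the fixed-circle count is only asserted as following from an analysis of the actions. So your proposal has to be measured against the cited proof, and your overall strategy (restrict the orbit space by Alexandrov geometry to $D^2$, a cylinder, or a M\"obius band, then reconstruct $M$ from the Raymond--Orlik classification of circle actions on closed $3$-manifolds, with the curvature hypothesis and Cheeger--Gromoll excluding most connected sums) is indeed the standard route. However, several of the concrete steps you state would fail as written. Orbits with cyclic isotropy acting by rotation on the normal slice project to \emph{interior} points of the orbit space, not to boundary points; each boundary component of $M/T^1$ is either entirely the image of a fixed circle or entirely made up of special exceptional orbits with $\zzz_2$ isotropy, and the latter cannot occur when $M$ is orientable. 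Consequently the lens spaces arise from a disk orbit space with fixed boundary and one interior exceptional orbit, and $\rrr P^3\#\rrr P^3\cong L(2,1)\#L(2,1)$ from a disk with fixed boundary and two interior $\zzz_2$-exceptional orbits, not from ``cyclic isotropy on the boundary'' nor from the cylinder. Your matching of the remaining cases is also off: a cylinder with both boundary circles fixed produces only $S^2\times S^1$ (a gluing map commuting with the rotations and fixing each pole is isotopic to the identity through such maps), whereas the non-trivial bundle $S^2\tilde{\times}S^1$ has a single fixed circle and M\"obius-band orbit space with fixed boundary, and $\rrr P^2\times S^1$ has cylinder orbit space with one fixed boundary circle and one boundary circle of $\zzz_2$ special exceptional orbits, not a M\"obius band. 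The method survives these corrections, but the bookkeeping you flag as the main obstacle is precisely where the sketch currently goes wrong.

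For the ``Moreover'' clause, statement (1) does not need equivariant formality: Proposition \ref{eformal} gives $\dim H^*(M^{T^1})\le \dim H^*(M)$ unconditionally, and fixed point homogeneity supplies at least one fixed circle, so total Betti number $2$ forces exactly one. For statement (2), your plan to verify equivariant formality ``from the explicit model of the action'' is circular as described: the explicit model is only available after the equivariant classification is complete, and at that point one can simply count fixed circles directly --- only $S^2\times S^1$ on the list has total Betti number $4$, and after excluding the other orbit-space configurations (disk with exceptional orbits gives connected sums of lens spaces, the M\"obius band and the special-exceptional cylinder give non-orientable manifolds) its action must have cylinder orbit space with both boundary circles fixed, hence two fixed circles. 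Note that the paper's remark after Theorem \ref{GG} runs the implication in the opposite direction: equivariant formality of these actions is \emph{deduced} from the fixed-circle count via Proposition \ref{eformal}, not used to obtain it.
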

\begin{remark} We make the following two observations.
\begin{enumerate}
\item By Proposition \ref{eformal}, it follows that non-negatively curved $S^1$-fixed point homogeneous $3$-manifolds are equivariantly formal.

\item
The only orientable manifold on this list that is not a rational cohomology sphere is $S^2\times S^1$. Moreover, $S^2\times S^1$ is the only  manifold on this list with total Betti number equal to $4$.
\end{enumerate}
\end{remark}

The  following theorem by Spindeler,  \cite{Spi}, gives a characterization of non-negatively curved $G$-fixed point homogeneous manifolds.

\begin{theorem}\label{Spindeler} \cite{Spi} Assume that $G$ acts fixed point homogeneously  on a closed, non-negatively curved Riemannian manifold $M$. Let $F$ be a fixed point component of maximal dimension. Then there exists a smooth submanifold $N$ of $M$, without boundary, such that $M$ is diffeomorphic to the normal disk bundles $D(F)$ and $D(N)$ of $F$ and $N$ glued together along their common boundaries
\begin{equation*}\label{Decomposition}
M  = D(F) \cup_{\partial} D(N).
\end{equation*}
Further, $N$ is $G$-invariant and all points of $M\setminus \{F\cup N\}$ belong to principal $G$-orbits. 
\end{theorem}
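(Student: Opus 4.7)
The plan is to construct $N$ as a dual to $F$ via the distance function $f(x) := \dist(x, F)$, and then derive the double disk bundle decomposition from the associated Sharafutdinov flow.

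Since $F$ is the fixed-point set of an isometric action it is closed and totally geodesic, and $f$ is continuous, $1$-Lipschitz, and $G$-invariant (because $F$ is $G$-invariant and $G$ acts by isometries). Non-negative sectional curvature combined with Toponogov comparison applied to the totally geodesic $F$ shows that each superlevel set $\{f\geq c\}$ is totally convex in the Cheeger--Gromoll sense; in particular, for $C := \max_M f$, the set $N_0 := f^{-1}(C)$ is a closed, totally convex, $G$-invariant subset of $M$. The Sharafutdinov--Perelman retraction then yields a $G$-equivariant strong deformation retraction of $M\setminus F$ onto $N_0$ whose flow lines are unit-speed minimizing geodesics joining $F$ to $N_0$ orthogonally.

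The main obstacle is upgrading $N_0$ to a smooth submanifold without boundary and establishing the principal-orbit statement. For both I would pass to the orbit space $M^* := M/G$, which by the fixed-point-homogeneous hypothesis has dimension $\dim F + 1$ and is a non-negatively curved Alexandrov space with the image of $F$ as a codimension-one face of $\partial M^*$. The pushforward $f^*$ is concave on $M^*$ and attains its maximum on the extremal subset $N_0^* := N_0/G$, which by the dimension count must itself be an extremal face of $M^*$. For any intermediate level $0<t<C$, the set $(f^*)^{-1}(t)$ lies in the open regular stratum of $M^*$, that is, in the stratum of principal orbits; pulling back to $M$ gives the principal-orbit assertion. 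The only non-principal strata of $M^*$ are therefore $F/G$ and $N_0^*$, so the isotropy is constant in a neighborhood of each point of $N_0$; combined with the orthogonal meeting of flow lines at $N_0$, this forces $N := N_0$ to be a smooth, closed, $G$-invariant submanifold of $M$.

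Finally, every point of $M\setminus (F\cup N)$ lies on a unique unit-speed minimizing geodesic joining $F$ to $N$ of length $C$, meeting both submanifolds orthogonally. Parametrizing such geodesics via the normal exponential maps of $F$ and of $N$ produces the two disk bundles $D(F)$ and $D(N)$, which meet along the common regular level set $f^{-1}(C/2)$, giving $M = D(F)\cup_\partial D(N)$ and completing the argument.
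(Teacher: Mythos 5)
The paper does not prove this statement; it imports it from Spindeler's thesis \cite{Spi}, so there is no internal proof to compare with. Judged on its own merits, your argument has genuine gaps. First, the key convexity claim is made in the wrong space: it is simply false that the superlevel sets of $f=\dist(\cdot,F)$ are totally convex in $M$. Already for the round $S^3$ with the circle action fixing a great circle $F$, the dual circle $N=f^{-1}(\pi/2)$ contains pairs of points joined by geodesics passing through $F$, so no superlevel set is totally convex; Toponogov comparison with a totally geodesic submanifold does not give this. What is true, and what the fixed point homogeneous hypothesis is really for, is that $F$ projects to a boundary component of the orbit space $M^*=M/G$, and $\dist(\cdot,F^*)$ is concave there by Alexandrov geometry; your later assertion about $f^*$ is the correct fact, but as written it is derived from the false statement upstairs.

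Second, and more seriously, the construction of $N$ and of the decomposition is wrong as stated. The maximal set $N_0^*\subset M^*$ is only a convex subset, possibly with non-empty boundary; for instance, for an invariant non-negatively curved metric on $S^3$ whose orbit space is a flat stadium-shaped disk, $N_0^*$ is a segment and its preimage $N_0$ is a compact surface with boundary, not the closed submanifold $N$ of the theorem. One must iterate the Cheeger--Gromoll construction inside $N_0^*$ and take $N$ to be the preimage of the resulting soul, so your identification $N:=N_0$ fails. Relatedly, your final claim that every point of $M\setminus(F\cup N)$ lies on a unique minimizing geodesic from $F$ to $N$ of length $C$, orthogonal at both ends, is false in general (already when $N_0^*$ is a single point of a non-round orbit space, e.g.\ an ellipse-shaped flat disk, most points lie on no such geodesic), so the double disk bundle structure cannot be read off from these geodesics; Sharafutdinov flow lines are not geodesics, the retraction is not smooth, and establishing a diffeomorphism $M\cong D(F)\cup_{\partial}D(N)$ requires showing the relevant distance functions have no critical points in the intermediate region together with a genuine smoothing argument. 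Finally, the assertion that the intermediate levels $(f^*)^{-1}(t)$, $0<t<C$, lie in the principal stratum is exactly the principal-orbit conclusion of the theorem; in your write-up it is assumed rather than proved.
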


\begin{remark}\label{Hinvariant} In fact $N$ is actually $\mathrm{Isom}_F(M)$-invariant, where  $\mathrm{Isom}_F(M)$ is the subgroup of isometries of $M$ leaving $F$ invariant, by Lemma 3.30 in \cite{Spi}.
\end{remark}
Finally, we recall the following Splitting Theorem due to Cheeger and Gromoll \cite{CG}.
\begin{theorem}\label{CG}\cite{CG}  Let $M$ be a compact manifold of non-negative Ricci curvature. Then $\pi_1(M)$ contains a finite normal subgroup $\Psi$ such that $\pi_1(M)/\Psi$ is a finite
group extended by $\zzz^k$, and $\widetilde{M}$, the universal covering of $M$, splits isometrically as $\overline{M} \times \rrr^k$, where $\overline{M}$ is compact.
\end{theorem}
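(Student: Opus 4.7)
The plan is to deduce the compact version of the splitting theorem from the classical Cheeger--Gromoll splitting theorem for complete manifolds of non-negative Ricci curvature containing a line (which asserts that such a manifold splits isometrically as a Riemannian product $N \times \rrr$). Let $\widetilde{M}$ denote the universal cover of $M$, equipped with the pullback metric, so that $\pi_1(M)$ acts by deck isometries, freely and properly discontinuously, with quotient $M$. Since $M$ is compact and has non-negative Ricci curvature, so does $\widetilde{M}$.

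The first step is to iterate the line-splitting theorem. If $\widetilde{M}$ contains a line, then $\widetilde{M} \cong N_1 \times \rrr$ isometrically; since $N_1$ is again complete with non-negative Ricci curvature, we can apply the splitting theorem again if $N_1$ contains a line. We continue until we reach a factorization $\widetilde{M} \cong \overline{M} \times \rrr^k$ for some maximal $k \geq 0$, where $\overline{M}$ contains no line. Termination of this iteration requires showing $k$ is finite; this follows because the Euclidean factor must embed isometrically in $\widetilde{M}$, so $k$ is bounded by the dimension of $M$. The main task is to prove that the residual factor $\overline{M}$ is compact. The key observation is that a complete Riemannian manifold of non-negative Ricci curvature that contains no line and admits a cocompact isometric group action must be compact: otherwise one could extract a ray from a divergent sequence and, using the cocompactness to translate basepoints back into a fundamental domain, produce a line by a standard limiting argument, contradicting the assumption.

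Next, analyze the deck transformation action. By rigidity of the de Rham decomposition (or directly, by the uniqueness of the line-splitting), every isometry of $\widetilde{M} \cong \overline{M} \times \rrr^k$ preserves the product decomposition up to permutation of factors, and since there is only one Euclidean factor, each deck transformation has the form $\gamma = (\varphi_\gamma,\psi_\gamma)$ with $\varphi_\gamma \in \Isom(\overline{M})$ and $\psi_\gamma \in \Isom(\rrr^k)$. Consider the projection homomorphism $\rho \colon \pi_1(M) \to \Isom(\rrr^k)$, $\gamma \mapsto \psi_\gamma$. The image $\rho(\pi_1(M))$ is a discrete subgroup of the Euclidean isometry group acting cocompactly on $\rrr^k$ (since the deck action on $\widetilde{M}$ is cocompact and $\overline{M}$ is compact), so by Bieberbach's theorem it contains a normal subgroup of finite index isomorphic to $\zzz^k$ consisting of pure translations.

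Finally, set $\Psi := \ker \rho$. Elements of $\Psi$ act trivially on the $\rrr^k$-factor and thus act as isometries of $\overline{M}$ alone; since the deck action on $\widetilde{M}$ is properly discontinuous and $\overline{M}$ is compact, $\Psi$ must be finite. As the kernel of a homomorphism, $\Psi$ is normal in $\pi_1(M)$, and $\pi_1(M)/\Psi \cong \rho(\pi_1(M))$ is, by the preceding paragraph, a finite extension of $\zzz^k$. This yields all the claimed structure. The most delicate step is establishing compactness of $\overline{M}$: proving that the absence of lines together with cocompactness of the deck action forces compactness is where the interaction between the Ricci curvature hypothesis, the iterated splitting, and the group action is used most subtly.
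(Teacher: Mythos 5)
This theorem is not proved in the paper at all: it is quoted from Cheeger--Gromoll \cite{CG}, so the only available comparison is with the classical argument, and your proposal is essentially that argument (iterate the line-splitting theorem to get $\widetilde{M}\cong\overline{M}\times\rrr^k$ with $\overline{M}$ line-free, show $\overline{M}$ is compact by the ray-to-line limiting trick under a cocompact action, split the deck group through $\Isom(\overline{M})\times\Isom(\rrr^k)$, and apply Bieberbach to the Euclidean image, with $\Psi=\ker\rho$ finite). Two points to tighten: the observation that isometries of $\widetilde{M}$ preserve the splitting (which uses only that $\overline{M}$ contains no lines) must come \emph{before} the compactness argument, since it is needed to project the cocompact deck action to an isometric action on $\overline{M}$; and discreteness of $\rho(\pi_1(M))$ in $\Isom(\rrr^k)$, needed for Bieberbach, deserves a word --- it follows because $\Isom(\overline{M})$ is compact, so distinct elements of the deck group with converging Euclidean parts would contradict discreteness of $\pi_1(M)$ in $\Isom(\widetilde{M})$.
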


\section{Closed, non-negatively curved $5$-dimensional GKM manifolds} \label{s3}

Observe that a closed $(2n+1)$-dimensional, equivariantly formal manifold admitting an effective, isometric $T^n$ action is GKM. 
In this section we prove Theorem \ref{m5}, which classifies the universal covers of closed, non-negatively curved, $5$-dimensional, equivariantly formal manifolds admitting an effective, isometric $T^2$ action and hence the universal covers of closed, non-negatively curved, $5$-dimensional GKM manifolds.  Theorem \ref{m5} will   facilitate the proof of Theorem \ref{graphs}, where we classify the geometric graphs of closed, non-negatively curved, $5$-dimensional GKM manifolds. 

We also note that as $M^T\neq \emptyset$ by equivariant formality,  some $S^1\subset T^2$ fixes a codimension $2$ submanifold in $M^5$, that is, 
the $T^2$-action on $M$ is  $S^1$-fixed point homogeneous.  Recall that by Theorem \ref{Spindeler} if $M^5$ admits an isometric $T^2$-action that is $S^1$-fixed point homogeneous, then we may decompose $M^5$ as a union of disk bundles, that is 
\begin{equation}\label{decomp}
M^5=D^2(F^3)\cup_ED(N),
\end{equation} where $F^3$ is the codimension two fixed point set of some circle subgroup of $T$, $E$ is the common boundary of the disk bundles, and by Remark \ref{Hinvariant}, $N$ is a $T^2$-invariant submanifold. Moreover, $F^3$ is itself $S^1$-fixed point homogeneous by Proposition \ref{ef}.

\begin{theorem}\label{m5} Let $M^5$ be a closed, non-negatively curved, equivariantly formal $5$-dimensional   manifold  admitting an isometric $T^2$-action.  
Then $\rk(H_1(M^5; \zzz))\leq 1$ and
 we may classify the corresponding universal cover, $\widetilde{M}^5$, as follows.
\begin{enumerate}
\item For $\rk(H_1(M^5;\zzz)) = 0$,  $\widetilde{M}^5$ is diffeomorphic to one of  $S^5$, $S^3\times S^2$, or  $S^3\tilde{\times} S^2$, the non-trivial $S^3$ bundle over $S^2$; 
\item For $\rk(H_1(M^5;\zzz))=1$, $\widetilde{M}^5$ is diffeomorphic to $\rrr\times M^4$, where $M^4$ is one of $S^4$, $\ccc P^2$, $S^2\times S^2$, or $\ccc P^2\# \pm \ccc P^2$. 
\end{enumerate}
\end{theorem}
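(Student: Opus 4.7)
The plan is to combine the Cheeger--Gromoll splitting with low-dimensional classification results. First, by Theorem \ref{CG}, the universal cover splits isometrically as $\widetilde M=\overline M\times\rrr^k$ with $\overline M$ compact and simply connected, while $\pi_1(M)/\Psi$ is a finite extension of $\zzz^k$. Abelianizing gives $\rk(H_1(M;\zzz))=k$, so the theorem reduces to showing $k\le 1$ and classifying $\widetilde M$ in the two remaining cases.

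Second, I lift the $T^2$-action to $\widetilde M$ via Theorem \ref{lift}. Because $M^{T^2}\ne\emptyset$ (by equivariant formality and Proposition \ref{eformal}, which gives $\dim H^*(M^T)=\dim H^*(M)\ge 1$), the lifted group remains $T^2$ and acts isometrically, so both the torus and the deck group $\pi_1(M)$ preserve the de Rham splitting (the compact and Euclidean factors being intrinsically distinguishable). A center-of-mass argument embeds the image of $T^2$ in $\Isom(\rrr^k)$ into some $O(k)$, while the image of $\pi_1(M)$ in $\Isom(\rrr^k)$ contains a rank-$k$ lattice of translations (this is part of the Cheeger--Gromoll structure, equivalently Bieberbach). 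Since the lifted $T^2$-action commutes with deck transformations, its image in $O(k)$ commutes with every lattice translation, and hence must be trivial. Thus $T^2$ acts trivially on the Euclidean factor and effectively on $\overline M^{5-k}$.

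Third, suppose $k\ge 2$. Then $\overline M$ is a simply connected compact non-negatively curved manifold of dimension at most $3$, and so has dimension $0$, $2$, or $3$ (no simply connected compact $1$-manifold exists). Since $\widetilde M^{T^2}=\overline M^{T^2}\times\rrr^k$ must project onto the non-empty set $M^{T^2}$, we need $\overline M^{T^2}\ne\emptyset$. But a point (for $k=5$) admits only the trivial action, contradicting effectiveness; $S^2$ (for $k=3$) admits no effective $T^2$-action by rank considerations; and an effective $T^2$-action on $S^3$ (for $k=2$) sits in the maximal torus of $SO(4)$, whose fixed set on $S^3$ is empty. Each case fails, so $k\le 1$.

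Finally, I invoke classifications. For $k=0$, $\widetilde M$ is a simply connected closed non-negatively curved $5$-manifold with effective isometric $T^2$-action and non-empty fixed point set; the corresponding smooth classification (referenced in the Introduction) yields $\widetilde M\cong S^5$, $S^3\times S^2$, or $S^3\tilde{\times} S^2$, with the Wu manifold $SU(3)/SO(3)$ excluded because its rank-$2$ maximal torus cannot be conjugated into the rank-$1$ $SO(3)$ and so has empty fixed set. For $k=1$, the only compact subgroup of $\Isom(\rrr)$ is trivial, so $T^2$ acts effectively on $\overline M^4$ with non-empty fixed set, and the corresponding classification of simply connected closed non-negatively curved $4$-manifolds with effective $T^2$-action yields $\overline M^4\in\{S^4,\ccc P^2,S^2\times S^2,\ccc P^2\#\pm\ccc P^2\}$. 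The main obstacle is the second step: once one has trivialized the $T^2$-action on the Euclidean factor of the Cheeger--Gromoll splitting, the dimension count $\dim\overline M=5-k$ and the low-dimensional classifications do the rest of the work.
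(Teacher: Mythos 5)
Your mechanism for bounding the Euclidean factor is genuinely different from the paper's, and that part of your argument is sound: the paper obtains the bound on $\rk H_1$ through Lemma \ref{h1.1} (the disk-bundle decomposition of Theorem \ref{Spindeler}, the Betti-number bound of Proposition \ref{Betti}, and Mayer--Vietoris/Gysin computations), whereas you lift the action (legitimately, since $M^{T^2}\neq\emptyset$ by Proposition \ref{eformal} and Theorem \ref{lift}), kill its projection to $\Isom(\rrr^k)$ by commuting it against the Bieberbach lattice, and then note that $T^2$ cannot fix a point on a manifold of dimension at most $3$. This does give $k\le 1$, hence $\rk(H_1(M^5;\zzz))\le 1$ (since $\rk H_1\le k$), and it also gives Part (2), because $\rk H_1=1$ forces $k=1$; from there your argument runs parallel to the paper's, which uses Hano's theorem to split the isometry group before quoting the $4$-dimensional classification.

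The genuine gap is the sentence ``Abelianizing gives $\rk(H_1(M;\zzz))=k$.'' Abelianization only yields $\rk H_1(M;\zzz)\le k$: since $\Psi$ is finite, $\rk H_1(M)$ equals the dimension of the rational invariants of the translation lattice $\zzz^k$ under the finite holonomy of the crystallographic quotient $\pi_1(M)/\Psi$, and this drops whenever the holonomy acts rationally nontrivially (the Klein-bottle phenomenon, e.g.\ an infinite dihedral image in $\Isom(\rrr)$). Consequently your reduction does not prove Part (1): from $k\le 1$ and $\rk H_1=0$ you cannot conclude $k=0$, i.e.\ you have not shown $\widetilde M$ is compact. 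This failure is not a removable technicality under the stated hypotheses: take $N^5=(S^4\times S^1)/\zzz_2$, where the involution acts by the antipodal map on $S^4$ and by complex conjugation on $S^1$, and $T^2$ acts by rotations in two coordinate planes of $S^4$ and trivially on $S^1$. Then $N$ is closed, orientable, non-negatively curved, the $T^2$-action is effective, isometric and equivariantly formal (both $\dim H^*(N;\qqq)$ and $\dim H^*(N^{T^2};\qqq)$ equal $2$, the fixed set being a single circle), and $\rk H_1(N;\zzz)=0$ since $\pi_1(N)$ is infinite dihedral; yet $\widetilde N=S^4\times\rrr$, so $k=1$. So the identification of $k$ with $\rk H_1$ is exactly the delicate point, it genuinely fails under the hypotheses you are allowed to use, and Part (1) cannot be recovered from your reduction alone (the example also shows that matching the two cases of the theorem to the two values of $\rk H_1$ requires more than what is displayed; the paper's own route through Lemma \ref{h1.1} bounds $\rk H_1$ directly rather than via your equality).

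Two smaller points. Your treatment of $\dim\overline M=3$ presumes the round metric (``sits in the maximal torus of $SO(4)$''); a non-negatively curved metric on $S^3$ need not be round, but the conclusion is immediate anyway, since a $T^2$-fixed point on a $3$-manifold would give a faithful $T^2$-representation on $\rrr^3$, which is impossible. And your exclusion of the Wu manifold $SU(3)/SO(3)$ only rules out the maximal torus of the homogeneous action on the symmetric metric; to exclude an arbitrary isometric $T^2$-action with fixed points on an arbitrary non-negatively curved metric you should instead invoke the classifications the paper cites for Part (1), namely \cite{GGSp} and \cite{DES}, rather than a conjugacy argument inside $SU(3)$.
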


Combining the homeomorphism classification due to Rong \cite{R} and the diffeomorphism classification results of Smale \cite{Sm} and Barden \cite{Ba}, we have the following theorem for the positive curvature case.

\begin{theorem}\label{Rong}\cite{R} Let $M^5$ be a closed, simply-connected, positively curved $5$-dimensional  manifold admitting an isometric $T^2$-action. Then  $M^5$ is diffeomorphic to $S^5$.
\end{theorem}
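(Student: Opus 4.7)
The plan is to combine two classification theorems, one giving the topological type and the other upgrading it to a smooth type, exactly as the sentence preceding the theorem statement already indicates.

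The first step is to invoke Rong's theorem from \cite{R}, which classifies closed, simply-connected, positively curved Riemannian $n$-manifolds admitting an effective isometric action of $T^k$ with $k=\lfloor (n-1)/2\rfloor$ up to homeomorphism. For $n=5$ the threshold is $k=2$, so our hypotheses fall exactly in the range of Rong's classification. Inspection of the manifolds appearing on Rong's list shows that the only closed simply-connected $5$-dimensional one is the standard sphere (the complex and quaternionic projective spaces on the list occur in dimensions divisible by $2$ and $4$ respectively, and the Cayley plane is $16$-dimensional), so we conclude that $M^5$ is homeomorphic to $S^5$. The mechanism behind Rong's theorem — which I would treat as a black box rather than reprove — is an isotropy-representation analysis at $T^2$-fixed points, combined with Berger's zero theorem guaranteeing nonempty fixed sets of circle subgroups and Grove-Searle style arguments exploiting codimension-$2$ fixed point submanifolds.

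The second step is to upgrade the homeomorphism to a diffeomorphism via the Smale-Barden classification of closed, simply-connected, smooth $5$-manifolds \cite{Sm,Ba}. According to this classification, such manifolds are determined up to diffeomorphism by $H_2(M;\zzz)$ together with the second Stiefel-Whitney class $w_2$. Since $M^5$ is homeomorphic to $S^5$, we have $H_2(M^5;\zzz)=0$, and the Smale-Barden list then forces $M^5$ to be diffeomorphic to the standard $S^5$. Equivalently, Smale's $h$-cobordism theorem in dimension five yields that every smooth homotopy $5$-sphere is diffeomorphic to $S^5$, which applies here as soon as the first step is in place.

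The main obstacle, in the sense of where the real content lies, is Rong's theorem itself; the Smale-Barden step is essentially bookkeeping once the topological sphere conclusion is available. For the proof of Theorem \ref{Rong} as stated in this paper, however, the substantive work is entirely packaged into the two cited classifications, and the only genuine step is to verify the numerical condition $k=\lfloor (n-1)/2\rfloor$ with $n=5$ and $k=2$, after which the two black-box theorems combine immediately to give the stated diffeomorphism.
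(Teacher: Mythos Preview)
Your proposal is correct and follows exactly the approach the paper takes: the paper does not give a separate proof of this theorem but simply states, in the sentence preceding it, that the result follows by combining Rong's homeomorphism classification \cite{R} with the Smale--Barden diffeomorphism classification \cite{Sm,Ba}. Your elaboration of how these two ingredients fit together is accurate and matches the paper's intent.
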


Since positively curved manifolds have finite fundamental group, the following corollary is immediate, 
allowing us to classify the universal covers of positively curved $5$-dimensional GKM manifolds as follows.

\begin{corollary} Let $M^5$ be a closed, positively curved, equivariantly formal  $5$-dimensional  manifold admitting an isometric $T^2$-action. Then the universal cover of $M^5$ is diffeomorphic to $S^5$.\end{corollary}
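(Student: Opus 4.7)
The plan is to reduce the corollary to Theorem \ref{Rong} by passing to the universal cover and verifying that all of the required hypotheses are preserved. The two ingredients are that positive sectional curvature forces $\pi_1(M^5)$ to be finite (so $\widetilde{M}^5$ is compact), and that an isometric torus action lifts to an isometric torus action on the universal cover.

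First I would invoke Bonnet--Myers: a closed manifold of positive (and hence strictly positive, by compactness) sectional curvature has finite fundamental group, so the universal cover $\widetilde{M}^5 \to M^5$ is a finite-sheeted Riemannian covering. Pulling back the metric, $\widetilde{M}^5$ is a closed, simply-connected, positively curved $5$-manifold. Next I would apply Theorem \ref{lift} to the isometric $T^2$-action on $M^5$: since $T^2$ has a fixed point (guaranteed by the equivariant formality hypothesis together with Proposition \ref{eformal}, as $M^T \neq \emptyset$), the theorem gives a connected covering group $G'$ of $T^2$ acting effectively on $\widetilde{M}^5$ and covering the given action, with $G' \to T^2$ finite-sheeted. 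A finite-sheeted connected cover of $T^2$ is again a $2$-torus, so the lifted action is an effective $T^2$-action on $\widetilde{M}^5$, and it is isometric with respect to the pulled-back metric.

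At this point $\widetilde{M}^5$ satisfies exactly the hypotheses of Theorem \ref{Rong}: it is closed, simply-connected, positively curved, and admits an effective isometric $T^2$-action. Theorem \ref{Rong} then yields $\widetilde{M}^5 \cong S^5$, completing the proof. The only step that requires any care is the lifting of the torus action; everything else is formal. Note that we do not need to verify equivariant formality of the lifted action, since Theorem \ref{Rong} makes no such requirement — the equivariant formality hypothesis on $M^5$ is used only downstairs, to ensure that $M^T$ is non-empty so that Theorem \ref{lift} supplies an honest torus (and not a finite quotient) upstairs.
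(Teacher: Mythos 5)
Your proposal is correct and follows essentially the same route as the paper, which deduces the corollary immediately from finiteness of $\pi_1$ in positive curvature together with Theorem \ref{Rong}; your additional care in lifting the $T^2$-action via Theorem \ref{lift} (using the fixed point supplied by equivariant formality and Proposition \ref{eformal}) just makes explicit what the paper leaves implicit.
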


In order to prove Theorem \ref{m5}, 
we first need to prove Proposition \ref{Betti} and Lemmas \ref{even} and \ref{h1.1}, which follow. 

\begin{proposition}\label{Betti} Let $M^5$ be a  closed, orientable, equivariantly formal, non-negatively curved $5$-dimensional Riemannian manifold with an isometric $T^2$-action. 
Then the following hold for $b(M^5)$,  the total Betti number of $M^5$.
\begin{enumerate}
\item $2 \le b(M^5) \le 8$; and
\item
If $\,6 \le b(M^5) \le 8$, then in the decomposition of $M^5$ in Display \ref{decomp}, $F^3$ is diffeomorphic to $S^2\times S^1$ . 
\end{enumerate}
\end{proposition}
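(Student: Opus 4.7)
The plan is to apply Spindeler's Theorem \ref{Spindeler} to the $S^1$-subaction whose codimension-$2$ fixed component is $F^3$, yielding the decomposition $M^5 = D^2(F^3) \cup_E D(N)$ of Display \ref{decomp}. Since $M^5$ is oriented and the normal bundle of $F^3$ is an oriented rank-$2$ bundle (a complex line bundle under the $S^1$-action), $F^3$ is itself orientable. By Proposition \ref{ef}, the residual $T^2/S^1$-action on $F^3$ is equivariantly formal, and it is fixed point homogeneous because its top-dimensional fixed submanifold has codimension $1$ in the $3$-manifold $F^3$. Theorem \ref{GG}, restricted to orientable examples, then forces $F^3$ to be diffeomorphic to one of $S^3$, $L_{p,q}$, $S^2 \times S^1$, or $\rrr P^3 \# \rrr P^3$, with total Betti number in $\{2, 4\}$ and Betti equal to $4$ precisely when $F^3 \cong S^2 \times S^1$.

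The lower bound $b(M^5) \geq 2$ is immediate from $b_0 = b_5 = 1$. For the upper bound, Proposition \ref{eformal} combined with the inheritance of equivariant formality (Proposition \ref{ef}) gives $b(M^5) = b(M^{S^1})$, where $S^1$ denotes the chosen circle subgroup. I would then decompose $M^{S^1}$ into its connected components: each $3$-dimensional component inherits its own fixed point homogeneous, non-negatively curved, orientable structure from the residual $T^2/S^1$-action, hence falls under the classification of the previous paragraph with Betti $2$ or $4$, while each $1$-dimensional component is a circle of Betti $2$. The bound on $b(M^5)$ thus reduces to controlling the number and type of components of $M^{S^1}$.

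The key structural step is to show that $M^{S^1}$ has at most two $3$-dimensional components, and that when two are present they are diffeomorphic; I expect this to follow from the Alexandrov geometry of the $4$-dimensional non-negatively curved quotient $M^5/S^1$, whose boundary corresponds precisely to the $3$-dimensional fixed submanifolds and whose non-negative curvature constrains the number of boundary components via a soul/splitting-type argument. Together with the connectedness of $N$ (inherited from that of $E = \partial D^2(F^3)$, an $S^1$-bundle over the connected $F^3$) and the matching requirement $\partial D(N) = E$, this bounds the $1$-dimensional contributions and yields $b(M^5) \leq 8$. For part (2), assuming $b(M^5) \geq 6$, a Betti contribution of $2$ coming from $F^3 \in \{S^3, L_{p,q}, \rrr P^3 \# \rrr P^3\}$ cannot be supplemented enough by the limited $1$-dimensional components permitted by the bundle compatibility, forcing $b(F^3) = 4$ and hence $F^3 \cong S^2 \times S^1$. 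The main obstacle lies precisely in establishing the Alexandrov-geometric cap of two $3$-dimensional fixed components and in verifying the bundle-theoretic restrictions on $D(N)$ imposed by the $S^1$-bundle structure of $E \to F^3$; this is where the non-negative curvature hypothesis does the crucial work.
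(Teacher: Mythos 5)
Your setup matches the paper's: the Spindeler decomposition $M^5=D^2(F^3)\cup_E D(N)$, equivariant formality to get $b(M^5)=b(\Fix(M^5;S^1))$, and Theorem \ref{GG} to pin down $F^3$. But the two steps that actually carry the proposition are not established in your proposal, and one of them is argued by a heuristic that fails. For the upper bound, you propose to cap the number of $3$-dimensional components of $M^{S^1}$ by an Alexandrov ``soul/splitting-type'' argument on $M^5/S^1$ and to control the $1$-dimensional components by unspecified ``bundle compatibility''; you acknowledge this is the main obstacle, and it is precisely the content you would need to prove. The paper never needs such a count: Spindeler's theorem already confines every $S^1$-fixed component other than $F$ to the connected invariant submanifold $N$, Lemma \ref{even} forces $\dim N\in\{1,3\}$ when $N$ meets $\Fix(M;S^1)$, and then either $N$ is fixed by a circle in $T^2$ (so Theorem \ref{GG} applies to $N$) or $T^2$ acts on $N^3$ with cohomogeneity one (so the Mostert--Neumann classification applies); in all cases $b(\Fix\cap N)\le 4$, giving $b(M^5)\le b(F^3)+4\le 8$. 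Your outline uses neither Lemma \ref{even} nor the cohomogeneity-one classification, so the bound $b(M^5)\le 8$ is not actually derived.

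The gap in part (2) is more serious than an omission: your claim that when $b(F^3)=2$ the remaining fixed components ``cannot be supplemented enough'' is false as a counting statement. The dangerous configuration is $F^3$ a rational homology sphere ($b=2$) together with $N\cong S^2\times S^1$ containing fixed circles contributing $4$, which yields $b(M^5)=6$ and is perfectly consistent with any Betti-number bookkeeping; it is not excluded by limiting $1$-dimensional components. The paper rules it out topologically: since $F^3$ is a rational homology sphere, the Gysin sequence shows the common boundary $E$ has the rational homology of $S^1\times S^3$, and then the Mayer--Vietoris sequence of the disk-bundle decomposition combined with Poincar\'e duality ($b_2=b_3$, $b_1=b_4=2-b_2$) produces a contradiction with exactness for both possible values $b_2\in\{1,2\}$. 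Without this Gysin/Mayer--Vietoris step (or a genuine substitute), your part (2) does not go through.
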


Before we begin the proof, we need the following result concerning the dimension of the submanifold at maximal distance from the codimension two fixed point set in $M^{2n+1}$.

\begin{lemma}\label{even} Suppose $M^{2n+1}$ is an $S^1$-fixed point homogeneous closed, orientable manifold of non-negative curvature.  Let $F$ be a codimension two fixed point set component of the $S^1$-action and suppose $N$ is the submanifold  given in the disk bundle decomposition of Theorem \ref{Spindeler}. 
Then if $N\cap \Fix(M; S^1)\neq \emptyset$, $\codim(N)$ is even.
\end{lemma}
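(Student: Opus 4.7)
The plan is to exploit the Spindeler decomposition $M^{2n+1} = D^2(F)\cup_E D(N)$ from Theorem \ref{Spindeler} and study the $S^1$-action on the common boundary $E$ from both sides. The key observation will be that the two different descriptions of $E$ force opposing parity constraints on $\codim N$.

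First, I would describe $E$ from the $F$-side. Since $F$ has codimension $2$, its normal bundle $\nu F$ has rank $2$, and $E=\partial D^2(F)$ is the unit circle bundle of $\nu F$. Because $F$ is a connected component of $\Fix(M;S^1)$, the isotropy representation on each fiber $\nu_q F\cong\rrr^2$ is a non-trivial orthogonal representation of $S^1$ on $\rrr^2$, which must be a (non-zero speed) rotation. Consequently the induced action on the unit circle in $\nu_q F$ is free on each fiber, so $S^1$ acts freely on all of $E$; in particular $E^{S^1}=\emptyset$.

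Second, I would read $E$ from the $N$-side. Here $E=\partial D(N)$ is the unit sphere bundle of $\nu N$. Choose a point $p\in N\cap\Fix(M;S^1)$, which exists by hypothesis. Since $N$ is $S^1$-invariant and $p$ is fixed, $S^1$ acts orthogonally on $\nu_p N\cong \rrr^{\codim N}$, and the fiber $S^{\codim N -1}\subset\nu_p N$ of $E$ over $p$ carries the induced orthogonal $S^1$-action. By the previous step this action on $S^{\codim N -1}$ can have no fixed points.

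Finally, I would invoke the elementary fact that an orthogonal $S^1$-representation on $\rrr^m$ decomposes into a trivial summand together with $2$-dimensional rotation blocks; thus the induced action on $S^{m-1}$ is fixed-point free if and only if the trivial summand is zero, which forces $m$ to be even. Applied with $m=\codim N$, this immediately yields $\codim N\equiv 0 \pmod 2$. The only step with any subtlety is the identification of the $S^1$-action on a fiber of $E\to F$ as a rotation, which is just the observation that $F$ is a maximal fixed component so the slice representation on the $2$-dimensional normal fiber admits no non-trivial fixed vector; everything else is structural.
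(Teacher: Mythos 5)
Your argument is correct, but it runs differently from the paper's. The paper argues through the fixed-point component: if $N\cap \Fix(M;S^1)\neq\emptyset$, then by Theorem \ref{Spindeler} (fixed points lie in $F\cup N$, and $F\cap N=\emptyset$) the whole component $A$ of $\Fix(M;S^1)$ through such a point is contained in $N$; since fixed components of circle actions have even codimension, $A$ has even codimension in $M$ and, because $N$ is $S^1$-invariant, also even codimension in $N$, and subtracting gives that $\codim(N)$ is even. You instead localize at a single fixed point $p\in N$ and show the slice representation on $\nu_pN$ has no trivial summand, by exhibiting fixed-point-freeness of the action on the boundary $E$ from the $F$-side; the two proofs draw on the same parity source (nontrivial $S^1$-representations split into $2$-dimensional rotations) but organize it differently, and yours additionally extracts that the normal representation to $N$ at $p$ is fixed-point free, at the cost of using the boundary structure of the decomposition rather than just the confinement of fixed points to $F\cup N$. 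Two small refinements: the action on the circle fibers of $E\to F$ need only be observed to be fixed-point free (freeness requires the normal weight to be $\pm 1$, which does hold for an effective action, but is more than you need); and rather than invoking an equivariant identification of the fiber of $E\to N$ over $p$ with the unit sphere in $\nu_pN$, it is cleaner to note that if $\nu_pN$ contained a nonzero $S^1$-fixed vector $v$, then $\exp_p(tv)$ for small $t>0$ would be fixed points lying in $M\setminus(F\cup N)$, contradicting the statement in Theorem \ref{Spindeler} that all such points belong to principal orbits; this sidesteps any question about the equivariance of the disk-bundle gluing.
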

\begin{proof} Recall that 
by Theorem \ref{Spindeler} all singularities of the $S^1$-action are contained in $F$ and $N$ and $N$ is $S^1$-invariant.
Let $N\cap \Fix(M; S^1)\neq \emptyset$.  Then any connected component  $A$ of $\Fix(M; S^1)$ that is not contained in $F$  must be contained in $N$, and is of  even codimension in $M$.
Since $N$ is $S^1$-invariant, $A$ is  also of even codimension in $N$ and the result follows. 
\end{proof}

We are now in a position to prove Proposition \ref{Betti}.

 \begin{proof}[{\bf Proof of Proposition \ref{Betti}}]
 Recall that the hypothesis of equivariant formality guarantees that the $T^2$-action on $M$ is  $S^1$-fixed point homogeneous and the manifold decomposes as in Display \ref{decomp}.
Moreover, by Proposition \ref{eformal} the total Betti number of $\Fix(M^5; S^1)$ equals that of $M^5$. Suppose first that $N$ does not contain any $S^1$-fixed points, that is, $\Fix(M^5; S^1)=F^3$. Since $F^3$ is itself non-negatively curved and $S^1$-fixed point homogeneous with respect to some other subcircle of $T^2$,  the proposition is proven, as by Theorem \ref{GG}, the total Betti number of $F^3$ is either $2$ or $4$.

We now assume that there are $S^1$-fixed points in $N$. This implies by Lemma \ref{even} that $N$ is of dimension $1$ or $3$ only. If $N$ is $1$-dimensional, it follows from the classification of $1$-manifolds that $N=S^1$, and so, the total Betti number of $M$ is either $4$ or $6$.

Assume then that $N$ is $3$-dimensional. If $N^3$ is fixed by some circle subgroup of $T^2$, it is an orientable, totally geodesic submanifold of $M$, and thus non-negatively curved. Since the $T^2$-action is effective and equivariantly formal, by Corollary \ref{He}, this implies that $N^3$ is $S^1$-fixed point homogeneous for some $S^1\subset T^2$ and so $N^3$ is one of the manifolds classified in Theorem \ref{GG}. Therefore, the total Betti number of $N^3$ is $2$ or $4$. Then by Proposition \ref{eformal}, the total Betti number of the $S^1$-fixed point set in $N^3$ is also bounded from above by $4$. It follows that the total Betti number of $M$ is bounded by $8$, and if $F^3$ is a rational cohomology sphere, then it is bounded by $6$.

Suppose then that $N^3$ is not fixed by any circle subgroup of $T^2$.  Then
the $T^2$-action on $N^3$ is of cohomogeneity one, and since  there are $S^1$-fixed points, 
 it follows by the classification of $T^2$ cohomogeneity one  $3$-manifolds in Mostert \cite{M}, and Neumann \cite{N}, that $N^3$ must be one of $S^3$, $L_{p,q}$, $S^2\times S^1$,  $\rrr P^2 \times S^1$, or $S^2\tilde{\times} S^1$. Note that in all these cases, the total Betti number of $N^3$ is bounded between $2$ and $4$. Thus, it follows that the total Betti number of $M$ is bounded by $8$, and, again, if $F^3$ is a rational cohomology sphere, it is bounded by $6$. This proves Part ($1$).

We now prove Part ($2$).  We  assume that $\,6 \le b(M^5) \le 8$ and that $F^3$ is not diffeomorphic to $S^2 \times S^1$, to derive a contradiction.  Then by Theorem \ref{GG}, $F^3$ is a rational (co)homology sphere, and so $b(F^3)=2$.   Since $b(M^5)\leq b(F^3) + b(N)$ and we have seen that $2\leq b(N)\leq 4$, we now have  $b(M^5)\leq 6$. But then  $b(M^5)=6$ and hence $b(N)=4$.
So $N$ must be $S^2\times S^1$. 
Since $E$ is the total space of the circle bundle inside the disc bundle $D^2(F^3)\to F^3$,
it follows from the Gysin sequence  (see Switzer \cite{Sw}) that $E$
has the same rational homology as $S^1\times S^3$. Consider now the Mayer-Vietoris sequence of the decomposition \eqref{decomp}
\[H_5(M^5)\longrightarrow \qqq\longrightarrow 0 \longrightarrow H_4(M^5) \longrightarrow \qqq \longrightarrow \qqq^2 \longrightarrow H_3(M^5) \]\[\longrightarrow 0 \longrightarrow \qqq \longrightarrow H_2(M^5) \longrightarrow \qqq  \longrightarrow \qqq\longrightarrow H_1(M^5) \longrightarrow 0.
\]
Poincar\'e duality and the fact that $b(M^5)=6$ gives us that $b_2=b_3$ and hence $b_1=b_4=2-b_2$.  Exactness at $H_2(M^5)$, implies that $b_2=b_3\in\{1, 2\}$. However, it is clear that exactness is violated for $b_2=b_3=1$ or $b_2=b_3=2$. 
\end{proof}

With this information, we now obtain the following lemma.
\begin{lemma}\label{h1.1}  Let $M^5$ be a closed, orientable, equivariantly formal, non-negatively curved $5$-dimensional Riemannian manifold with an isometric $T^2$-action. 
Then $$\rk(H_1(M^5))\leq 1.$$
\end{lemma}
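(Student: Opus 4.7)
The plan is to use the disk-bundle decomposition of Spindeler (Theorem \ref{Spindeler}) together with a rational Mayer--Vietoris calculation of $H_1(M^5)$.

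First I reduce to the case of large total Betti number. If $b(M^5)\le 5$, then Poincaré duality forces $2+2b_1(M^5)+2b_2(M^5)=b(M^5)\le 5$, so $b_1(M^5)\le 1$ and the conclusion is immediate. Henceforth assume $b(M^5)\ge 6$; by Proposition \ref{Betti}(2), the codimension-two fixed point component $F^3$ is diffeomorphic to $S^2\times S^1$. Applying Theorem \ref{Spindeler}, write $M^5=D^2(F^3)\cup_E D(N)$, where $E$ is the circle bundle of the normal bundle of $F^3$. Since $F^3$ is connected, $E$ is connected; a short argument then shows that $N$ must be connected as well. Indeed, each component $N_i$ of $N$ has nonempty boundary $\partial D(N_i)$ (its normal bundle has positive rank, since otherwise $N_i$ would be codimension zero and hence all of $M$, contradicting $N\cap F=\emptyset$), and these boundaries are pairwise disjoint; their union is the connected space $E$, forcing $N$ to have a single component.

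Next, I apply the rational Mayer--Vietoris sequence to the decomposition. The map $H_0(E)\cong\qqq\to H_0(F^3)\oplus H_0(N)\cong\qqq^2$ is the injection $a\mapsto (a,-a)$, so the connecting map $H_1(M^5)\to H_0(E)$ vanishes and I obtain
\[
H_1(E)\xrightarrow{\phi} H_1(F^3)\oplus H_1(N)\longrightarrow H_1(M^5)\longrightarrow 0.
\]
The long exact homotopy sequence of the fibration $S^1\to E\to F^3$ gives a surjection $\pi_1(E)\twoheadrightarrow \pi_1(F^3)$ (since $\pi_0(S^1)=0$), and abelianizing yields a surjection $H_1(E)\twoheadrightarrow H_1(F^3)\cong\qqq$. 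Hence $\mathrm{im}(\phi)$ projects onto $H_1(F^3)$, so $\rk\,\mathrm{im}(\phi)\ge 1$, and therefore
\[
\rk H_1(M^5)\le \rk H_1(F^3)+\rk H_1(N)-1=\rk H_1(N).
\]

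It then suffices to check that $\rk H_1(N)\le 1$. This follows from the case analysis already carried out in the proof of Proposition \ref{Betti}: either $\dim N=1$, in which case $N\cong S^1$ and $\rk H_1(N)=1$; or $\dim N=3$ and $N$ is either $S^1$-fixed point homogeneous (so appears on the list of Theorem \ref{GG}) or admits a $T^2$-cohomogeneity one action (so appears on the Mostert--Neumann list \cite{M,N}). Inspection of each list yields $\rk H_1(N)\le 1$ in every case. The principal obstacle is the bookkeeping in the Mayer--Vietoris step, in particular verifying connectedness of $N$ and the surjectivity $H_1(E)\twoheadrightarrow H_1(F^3)$; once these are in hand, the desired bound drops out cleanly.
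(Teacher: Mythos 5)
Your argument is correct, and although it starts from the same disk bundle decomposition $M^5=D(F^3)\cup_E D(N)$ of Display \ref{decomp} and the same case analysis of $N$ as the paper, the homological core is genuinely different. The paper argues by contradiction in degrees $3$ and $4$: assuming $\rk H_1(M^5)=k\geq 2$, it feeds the Mayer--Vietoris injection $H_4(M^5)\cong\qqq^k\hookrightarrow H_3(E)$ into computations of $H_*(E)$ via the Gysin sequence of the circle (or sphere) bundle $E$ together with Poincar\'e duality of $E$ and of $M^5$, and in the final case ($F\cong N\cong S^2\times S^1$) it must invoke the upper bound $b(M^5)\leq 8$ from Proposition \ref{Betti} to conclude. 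You instead work entirely in degree $1$: the vanishing of the connecting map to $H_0(E)$, plus the surjectivity of $H_1(E)\to H_1(F^3)$ coming for free from the homotopy sequence of the circle bundle, yield the direct inequality $\rk H_1(M^5)\leq \rk H_1(N)$, and then $\rk H_1(N)\leq 1$ is read off from Theorem \ref{GG} and the Mostert--Neumann list. This avoids the Gysin computations, the Poincar\'e duality bookkeeping for $E$, and the final appeal to $b\leq 8$, and it also sidesteps the orientability discussion of $N$, since the non-orientable manifolds on the lists also have first Betti number at most $1$; your connectedness argument for $N$ is fine (even more simply, $E$ is connected and surjects onto $N$ under the bundle projection). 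One point you leave implicit: the dichotomy $\dim N\in\{1,3\}$, and the exclusion of torus bundles from the cohomogeneity one list, require $N\cap\Fix(M^5;S^1)\neq\emptyset$. This does follow from the case analysis you cite, because if $\Fix(M^5;S^1)=F^3$ then $b(M^5)=b(F^3)\leq 4$, contradicting your standing assumption $b(M^5)\geq 6$, but it should be stated explicitly.
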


\begin{proof}    We assume that  $\rk(H_1(M^5))=k\geq 2$, in order to derive a contradiction. This assumption implies that
the total Betti number of $M^5$ must be greater than or equal to $6$. Recall again that $M^{5}$ decomposes as in Display \ref{decomp}, with $F^3$ the codimension one fixed point set of $S^1$. Therefore,  $F^3=S^2\times S^1$ by Proposition \ref{Betti} and so $b(F^3)=4$. Further, since all singularities of the $S^1$-action are contained in $F$ and $N$, and $b(M^T)=b(M^5)\geq 6$, by Proposition \ref{eformal},   then $M^T\cap N\neq \emptyset$. 
  Then Lemma \ref{even}  gives us that $N$ is of dimension $1$ or $3$. In fact, we will show that $\dim(N)=3$.

Recall that $E$ is a sphere bundle over both $F$ and $N$. Since $M$ is orientable, it follows from the disk bundle decomposition of $M$ that $E$ is also orientable.  We first assume $\dim(N)=1$, to derive a contradiction.  Then $E$ is an  orientable $S^3$ bundle over $S^1$, that is, $E=S^3\times S^1$,  and so $H_3(E)\cong \qqq$. However, it follows from the homology Mayer-Vietoris sequence that 
$H_4(M)\cong\qqq^k\hookrightarrow H_3(E)$, $k\geq 2$, giving us the desired contradiction.

Hence $\dim(N)=3$. Since $M^T\cap N\neq \emptyset$, the $T^2$-action  on $N$ is $S^1$-ineffective for some $S^1\subset T^2$ and $N$ is $S^1$-fixed point homogeneous for some other $S^1\subset T^2$. So $N$ is orientable and non-negatively curved.  By Theorem \ref{GG},  $N$ is one of $S^3$, $L_{p, q}$, $S^2\times S^1$ or $\rrr P^3\# \rrr P^3$.  However, 
 if $N$ is one of $S^3$, $L_{p, q}$, or $\rrr P^3\# \rrr P^3$, then by the Gysin sequence with rational coefficients,  $H_1(E)\cong \qqq$.  Poincar\'e duality then gives us that $H_3(E)\cong \qqq$, which is  
 a contradiction, as in the case of $\dim(N)=1$.

Thus both $F$ and $N$ are diffeomorphic to $S^2\times S^1$.
So $E$ is a principal $S^1$ bundle over $S^2\times S^1$ and from the homology Mayer-Vietoris sequence of $M$, we have $H_4(M^5)\cong\qqq^k\hookrightarrow H_3(E), k\geq 2$. Using the Gysin sequence with rational coefficients  
corresponding to the fibration $S^1\hookrightarrow E\rightarrow S^2\times S^1$, we see that $H_1(E)\cong \qqq$ or $\qqq^2$. Poincar\'e duality then implies that $k=2$, so $b_1=b_4=2$. 
Since $b_0=b_5=1$, it then follows from the homology Mayer-Vietoris sequence of $M^5$ that $b_2=b_3\geq 2$, which implies $b(M^5)\geq 10$. But Proposition \ref{Betti} guarantees that the total Betti number is bounded above by $8$, a contradiction.
\end{proof}

\begin{proof}[{\bf Proof of Theorem \ref{m5}}]

By Lemma \ref{h1.1},  $\rk(H_1(M^5))\leq 1$ and by Theorem \ref{CG}, $\widetilde{M}^5$ is either a closed, simply-connected, non-negatively curved manifold, or it splits isometrically as $\rrr^1\times M^4$, where $M^4$ is a closed, simply-connected, non-negatively curved $4$-manifold.
The proof of Part ($1$) then follows directly from work of Galaz-Garc\'ia and Spindeler \cite{GGSp} (cf. \cite{DES}).
The proof of Part ($2$) follows by noting that since the $T^2$-action on $M^5$ has non-empty fixed point set, we may apply Theorem \ref{lift} to lift the $T^2$-action to $\widetilde{M}^5$. By Theorem $1$ of Hano \cite{Hano}, the isometry group of $\rrr^1\times M^4$ splits as the product of the isometry groups of $\rrr^1$ and of $M^4$.  Since $T^2$ is a compact Lie group, this implies that the $T^2$-action  on the $\rrr^1$ factor is trivial and   on $M^4$ is isotropy-maximal. In particular, $M^4$ is then 
a non-negatively curved torus manifold. The classification of $4$-dimensional, non-negatively curved torus manifolds up to diffeomorphism follows from work of 
Kleiner \cite{K}, Searle and Yang \cite{SY}, and Galaz-Garc\'ia \cite{GG}. 
\end{proof}

\section{The classification of graphs corresponding to closed,  non-negatively curved $5$-dimensional odd GKM manifolds} \label{s4}

Our goal in this section is to classify the graphs corresponding to closed, non-negatively curved $5$-dimensional odd GKM manifolds. We first prove that the lower curvature bound imposes severe restrictions on odd GKM graphs.

\begin{proposition}\label{valence} Let $M^{2n+1}$ be a non-negatively curved odd GKM manifold, then each square in the graph, $\bar{\Gamma}_M$, has valence one or two.
\end{proposition}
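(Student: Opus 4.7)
The plan is to analyze each three-dimensional submanifold $N_s$ of $M$ corresponding to a square vertex $s \in V_{\scaleto{\square}{4pt}}$ and apply the classification of non-negatively curved, $S^1$-fixed point homogeneous three-manifolds from Theorem \ref{GG}.

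First I would set up the induced action on $N_s$. By construction, $N_s$ is the three-dimensional component fixed by the codimension-one subtorus $K \subset T$ with Lie algebra $\ker \bar{\alpha}_M(s)$, so the quotient torus $T/K \cong S^1$ acts effectively on $N_s$. Since $N_s$ is the fixed set of a group of isometries, it is totally geodesic in $M^{2n+1}$ and therefore inherits non-negative sectional curvature. By Proposition \ref{ef}, the induced $T/K$-action on $N_s = M^K$ is equivariantly formal. Moreover, the $T$-fixed circles of $M$ contained in $N_s$ are precisely the fixed-point set of this $S^1$-action, and by the odd GKM hypothesis every square is adjacent to at least one circle vertex, so $N_s^{S^1}$ is a non-empty union of circles. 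In particular, $N_s$ is a closed, orientable, non-negatively curved, $S^1$-fixed point homogeneous three-manifold.

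Next I would invoke Theorem \ref{GG}: $N_s$ is diffeomorphic to one of $S^3$, $L_{p,q}$, $S^2 \times S^1$, $S^2 \tilde{\times} S^1$, $\rrr P^2 \times S^1$, or $\rrr P^3 \# \rrr P^3$. According to the remarks following Theorem \ref{GG}, in each of these cases the total Betti number of $N_s$ is either $2$ or $4$, and the $S^1$-action fixes exactly one circle in the former case and exactly two circles in the latter. Since the valence of $s$ is by definition the number of fixed circles contained in $N_s$, this valence is either $1$ or $2$.

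There is essentially no obstacle here beyond correctly identifying the induced action on $N_s$: once we know that $N_s$ is non-negatively curved, orientable, and $S^1$-fixed point homogeneous, Theorem \ref{GG} immediately limits the number of fixed circles to at most two. The one subtle point is verifying equivariant formality of the induced $S^1$-action on $N_s$ (needed to ensure that the fixed set is non-empty and one-dimensional), but this is supplied by Proposition \ref{ef} applied to $K \subset T$.
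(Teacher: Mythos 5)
Your argument is correct and follows essentially the same route as the paper: identify each square with the three-dimensional component $N_s$ of the fixed point set of a codimension-one subtorus, observe that $N_s$ is a closed, non-negatively curved, $S^1$-fixed point homogeneous $3$-manifold, and then read off from Theorem \ref{GG} that the induced circle action fixes one or two circles. Your extra care with equivariant formality (Proposition \ref{ef}) and orientability only makes explicit what the paper leaves implicit.
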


\begin{proof} The GKM condition in combination with the assumption of non-negative curvature guarantees us that the squares correspond to $3$-dimensional components of  fixed point sets of codimension one subtori, which themselves are  $T^1$-fixed point homogeneous, non-negatively curved,  $3$-dimensional manifolds.  
The result then follows directly from Theorem \ref{GG}.
\end{proof}

The corresponding result in positive curvature follows directly from the classification of positively curved $3$-manifolds due to Hamilton \cite{Ha}. 

\begin{proposition}\label{valence1} Let $M^{2n+1}$ be a positively curved odd GKM manifold, then each square in the   graph, $\bar{\Gamma}_M$, has valence one.

\end{proposition}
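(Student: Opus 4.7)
The plan is to reduce to Hamilton's classification of closed $3$-manifolds of positive Ricci curvature, applied to the $3$-dimensional fixed-point component encoded by each square. First, I would recall that a square $s\in V_{\scaleto{\square}{4pt}}$ of $\bar{\Gamma}_M$ corresponds to a connected component $N_s\subseteq M$ of $M^K$, where $K\subseteq T$ is the codimension-one subtorus with Lie algebra $\ker \bar\alpha_M(s)$. As the fixed point set of a group of isometries, $N_s$ is a closed, orientable, totally geodesic $3$-submanifold of $M$, and thus inherits positive sectional curvature from $M$. The residual action of $T/K\cong S^1$ is isometric and, by Corollary \ref{He} together with the odd GKM hypothesis, its fixed point set in $N_s$ is a non-empty finite disjoint union of circles; by construction, the number of these circles is precisely the valence of $s$.

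Next, I would invoke Hamilton's theorem: every closed $3$-manifold admitting a metric of positive Ricci curvature (and in particular of positive sectional curvature) is diffeomorphic to a spherical space form $S^3/\Gamma$. Applied to $N_s$, this yields that $N_s$ is a rational cohomology $3$-sphere, so its total rational Betti number equals $2$.

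Finally, $N_s$ is a closed, non-negatively curved, $S^1$-fixed point homogeneous $3$-manifold (its $S^1$-fixed set consists of circles, hence has codimension $2$) whose total Betti number equals $2$. Theorem \ref{GG} then forces the $S^1$-action on $N_s$ to fix exactly one circle, so the valence of $s$ is $1$. There is no essential obstacle here: once Hamilton's theorem rules out every diffeomorphism type on the list of Theorem \ref{GG} other than the rational $3$-sphere cases, the conclusion is read off directly from the second part of that theorem, and the verification that the induced $S^1$-action on $N_s$ is genuinely fixed-point homogeneous is immediate from the odd GKM setup.
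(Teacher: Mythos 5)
Your argument is correct and matches the paper's approach: the paper's proof is exactly the one-line reduction to Hamilton's classification applied to the totally geodesic $3$-dimensional components corresponding to squares, with the circle count then read off as in the proof of Proposition \ref{valence} via Theorem \ref{GG}. Your write-up simply makes explicit the steps (orientability, non-trivial residual $S^1$-action, Corollary \ref{He}, Betti number $2$) that the paper leaves implicit.
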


In the following theorem, we obtain a classification of the graphs of closed, non-negatively curved, $5$-dimensional odd GKM manifolds. 

\begin{theorem} \label{graphs} Let $M^5$ be a non-negatively curved,  $5$-dimensional odd GKM manifold. Then its graph, $\bar{\Gamma}_{M^5}$, is one of the following possibilities, enumerated according to the total Betti number of $M^5$.

\begin{enumerate}
\item For total Betti number equal to $2$, we obtain a circle with two edges terminating in squares 

\begin{center}
\begin{tikzpicture}[square/.style={regular polygon,regular polygon sides=4}]

\draw[line width=.2mm] (-2,1) -- ++(2,0) -- ++(2,0);
\node at (-2,1)[square,fill,inner sep=2pt]{};
\node at (0,1)[circle,fill,inner sep=2pt]{};
\node at (2,1)[square,fill,inner sep=2pt]{};

\end{tikzpicture}\, .

\end{center}
\hskip .2cm

\item For total Betti number equal to $4$, we have the following two possibilities

\hskip .2cm

\begin{center}
\begin{tikzpicture}[square/.style={regular polygon,regular polygon sides=4}]

\draw[line width=.2mm] (-2,1) -- ++(0,-2) -- ++(2,0) -- ++(2,0) -- ++(0,2);
\node at (-2,1)[square,fill,inner sep=2pt]{};
\node at (-2,-1)[circle,fill,inner sep=2pt]{};
\node at (0,-1)[square,fill,inner sep=2pt]{};
\node at (2,-1)[circle,fill,inner sep=2pt]{};
\node at (2,1)[square,fill,inner sep=2pt]{};

\end{tikzpicture}\,\,\,\, ,

\end{center}

\hskip .2cm
and
\begin{center}
\begin{tikzpicture}[square/.style={regular polygon,regular polygon sides=4}]

\draw[line width=.2mm] (-2,1) to[out=30,in=180] (0,1.5) to[out=0, in=150] (2,1) to[out=210, in=0] (0,0.5) to[out=180, in=330] (-2,1);
\node at (-2,1)[circle,fill,inner sep=2pt]{};
\node at (0,1.5)[square,fill,inner sep=2pt]{};
\node at (0,0.5)[square,fill,inner sep=2pt]{};
\node at (2,1)[circle,fill,inner sep=2pt]{};

\end{tikzpicture}\, .

\end{center}
\hskip .2cm

\item For total Betti number equal to $6$, we obtain a closed circuit  in the form of a triangle 

\hskip .2cm

\begin{center}
\begin{tikzpicture}[square/.style={regular polygon,regular polygon sides=4}]

\draw[line width=.2mm] (-2,1) -- ++(2,0) -- ++(2,0) -- ++(-2,3.5) -- ++(-2,-3.5);
\node at (-2,1)[circle,fill,inner sep=2pt]{};
\node at (0,1)[square,fill,inner sep=2pt]{};
\node at (2,1)[circle,fill,inner sep=2pt]{};
\node at (0,4.5)[circle,fill,inner sep=2pt]{};
\node at (1,2.75)[square,fill,inner sep=2pt]{};
\node at (-1,2.75)[square,fill,inner sep=2pt]{};

\end{tikzpicture}\, .

\end{center}
\hskip .2cm

\item  For total Betti number equal to $8$, we obtain a closed circuit in the form of a quadrangle 

\hskip .2cm
\begin{center}
\begin{tikzpicture}[square/.style={regular polygon,regular polygon sides=4}]

\draw[line width=.2mm] (-2,1) -- ++(0,-2) -- ++(2,0) -- ++(2,0) -- ++(0,2) -- ++(0,2) -- ++(-2,0) -- ++(-2,0) -- ++(0,-2);
\node at (-2,1)[square,fill,inner sep=2pt]{};
\node at (-2,-1)[circle,fill,inner sep=2pt]{};
\node at (0,-1)[square,fill,inner sep=2pt]{};
\node at (2,-1)[circle,fill,inner sep=2pt]{};
\node at (2,1)[square,fill,inner sep=2pt]{};
\node at (-2,3)[circle,fill,inner sep=2pt]{};
\node at (0,3)[square,fill,inner sep=2pt]{};
\node at (2,3)[circle,fill,inner sep=2pt]{};

\end{tikzpicture}\, .

\end{center}

\end{enumerate}
\end{theorem}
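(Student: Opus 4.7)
The plan is to combine the local structural constraints on odd GKM graphs with the strong topological consequences of non-negative curvature established in Section \ref{s3}. First I assemble the local constraints at vertices: by Proposition \ref{props}(2), every circle in $\bar{\Gamma}_{M^5}$ has valence exactly $n=2$, and by Proposition \ref{valence}, every square has valence one or two. Since $M^5$ is orientable, Poincar\'e duality forces $b(M^5)$ to be even, so by Proposition \ref{Betti}(1) the total Betti number lies in $\{2,4,6,8\}$, and by Proposition \ref{props}(3) the number $m$ of circles equals $b(M^5)/2\in\{1,2,3,4\}$. Throughout I will use that an edge records whether a given fixed circle is contained in a given $3$-dimensional component of the $1$-skeleton, so no parallel edges can occur between the same circle-square pair.

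For $m=1$, the single circle has valence $2$, and neither edge may terminate at a valence-2 square, since that would require two distinct fixed circles in the associated $3$-manifold. Hence both edges end at valence-1 squares, producing picture $(1)$. For $m=2$, writing $s_i$ for the number of valence-$i$ squares, the edge count $s_1+2s_2=4$ leaves three possibilities. The case $(s_1,s_2)=(4,0)$ splits the graph into two components, violating Proposition \ref{props}(1). The case $(2,1)$ forces the unique valence-2 square to join both circles, and each circle then uses its remaining edge to reach a distinct valence-1 square, yielding the linear graph in picture $(2)$. The case $(0,2)$ forces both valence-2 squares to join the same pair of circles, giving the $K_{2,2}$-type graph.

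For $m\in\{3,4\}$, and thus $b(M^5)\geq 6$, I apply Proposition \ref{Betti}(2) to each square individually. A square represents a $3$-manifold $N$ fixed by some circle $S^1\subset T^2$; the odd GKM condition (pairwise linear independence of the two normal weights at any fixed circle in $N$) bounds the dimension of the $S^1$-fixed tangent space at such a fixed circle by $3$, and Corollary \ref{He} rules out components of $M^{S^1}$ disjoint from $M^{T^2}$. Together these exclude any $4$-dimensional component of $M^{S^1}$, so $N$ is of maximal dimension inside $M^{S^1}$ and the $S^1$-action on $M^5$ is fixed-point-homogeneous. Choosing $F=N$ in the decomposition of Display \eqref{decomp}, Proposition \ref{Betti}(2) yields $N\cong S^2\times S^1$, and hence by Theorem \ref{GG} the $3$-manifold $N$ contains exactly two fixed circles, so every square is of valence $2$. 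For $m=3$ this gives $3$ valence-2 squares distributed among the $\binom{3}{2}=3$ pairs of circles; a simple degree count forces each pair to be realized exactly once, producing the hexagonal graph of picture $(3)$. For $m=4$ we obtain $4$ valence-2 squares among the $6$ possible pairs with each circle incident to exactly two of the chosen pairs, and the only connected such configuration is a $4$-cycle in $K_4$, giving the octagonal graph of picture $(4)$. The main obstacle I anticipate is precisely the step of upgrading Proposition \ref{Betti}(2) from a single ambient $F^3$ to every $3$-dimensional component of every codimension-one subtorus, as this is where the odd GKM hypothesis must be used in an essential way.
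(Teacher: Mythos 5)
Your proposal is correct and follows essentially the same route as the paper: Proposition \ref{props} for connectedness, $2$-valent circles, and $m=b(M^5)/2$; Proposition \ref{Betti} for the bound $2\le b\le 8$ and, when $b\in\{6,8\}$, the identification $F^3\cong S^2\times S^1$ combined with Theorem \ref{GG} to force every square to have valence $2$; then connectedness pins down the graphs. Your extra care in justifying that each square's $3$-manifold can be taken as the $F^3$ of maximal dimension in the disk-bundle decomposition (via the GKM weight condition and Corollary \ref{He}), and your explicit degree-count case analysis for $m=2,3,4$, simply spell out steps the paper treats as immediate ("any $F^3$", "connectedness directly implies the claim").
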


\hskip .2cm

\begin{proof} By Proposition \ref{props}, we have that  the graph is connected, 
  each circle is $2$-valent, and the number of fixed circles equals half the total Betti number. 

We showed in Proposition \ref{Betti} that the total Betti number for such $5$-manifolds is bounded between $2$ and $8$. In the case where it is $2$, $M^5$ is a rational cohomology sphere, the $T$-fixed point set is a single circle, and the graph is necessarily of the described form. If the total Betti number is $4$, the connectedness of the graph implies directly that it is of one of the two given shapes.

For the case of total Betti number $6$ or $8$,  Proposition \ref{Betti} tells us that any $F^3$ is diffeomorphic to  $S^2\times S^1$ in the decomposition of $M^5$ in Display \ref{decomp}, which 
has total Betti number $4$. This in turn implies by Proposition \ref{GG} that every square in the graph has valence $2$. The connectedness of the graph then directly implies the claim.
\end{proof}

\begin{example}
The standard examples for Theorem \ref{graphs} are the $T^2$-actions on $S^5$, $S^2\times S^3$, $S^4\times S^1$, $\ccc P^2\times S^1$ and $S^2\times S^2\times S^1$, respectively.
\end{example}

 Note that for positive curvature, using Proposition \ref{valence1}, it follows that only the first graph in Theorem \ref{graphs} occurs and we immediately obtain the following theorem.
 
 \begin{theorem}\label{posgraph}The unique graph, $\bar{\Gamma}_M$, corresponding to the 
positively curved  $5$-dimensional GKM manifolds is  a circle with two edges terminating in squares 

\hskip .2cm
\begin{center}
\begin{tikzpicture}[square/.style={regular polygon,regular polygon sides=4}]

\draw[line width=.2mm] (-2,1) -- ++(2,0) -- ++(2,0);
\node at (-2,1)[square,fill,inner sep=2pt]{};
\node at (0,1)[circle,fill,inner sep=2pt]{};
\node at (2,1)[square,fill,inner sep=2pt]{};

\end{tikzpicture}\, .
\end{center}
\hskip .2cm
\end{theorem}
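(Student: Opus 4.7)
The plan is to combine Proposition \ref{valence1}, which constrains the valence of squares in the graph of a positively curved odd GKM manifold, with the classification in Theorem \ref{graphs} of graphs for non-negatively curved $5$-dimensional odd GKM manifolds. Since positive curvature implies non-negative curvature, the graph $\bar{\Gamma}_M$ of a positively curved $5$-dimensional odd GKM manifold must appear in the list of four shapes enumerated in Theorem \ref{graphs}, indexed by total Betti number $2$, $4$, $6$, or $8$.

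First I would invoke Proposition \ref{valence1} to conclude that every square in $\bar{\Gamma}_M$ has valence exactly one. Then I would go through the four possibilities in Theorem \ref{graphs} and simply read off the valence of each square. A direct inspection shows that only the graph corresponding to total Betti number $2$ has all squares of valence $1$: in the first graph of total Betti number $4$ the middle square is $2$-valent, in the second (the bigon with two squares) both squares are $2$-valent, in the triangular graph of total Betti number $6$ all three squares are $2$-valent, and in the quadrangular graph of total Betti number $8$ all four squares are $2$-valent. Thus only the graph depicted in the statement survives, proving uniqueness, and the existence is witnessed by the standard $T^2$-action on $S^5$.

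There is no substantive obstacle here; the theorem is essentially a corollary of the two preceding results. The real geometric input, namely that positive curvature on the $3$-dimensional invariant submanifolds corresponding to squares forces them to be rational homology spheres (via Hamilton's classification of positively curved $3$-manifolds), has already been absorbed into Proposition \ref{valence1}, and Theorem \ref{graphs} provides the list against which to match.
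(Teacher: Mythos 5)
Your proposal is correct and follows exactly the paper's route: the paper derives Theorem \ref{posgraph} immediately by combining Proposition \ref{valence1} (all squares are $1$-valent in positive curvature) with the list in Theorem \ref{graphs}, observing that only the first graph in that list has no $2$-valent squares. Your case-by-case valence check and the remark that the standard $T^2$-action on $S^5$ realizes the graph are consistent with, and no more than a mild elaboration of, the paper's argument.
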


\section{Proof of the Main Theorem \ref{main}} \label{s5}

In this section we  prove the Main Theorem \ref{main}, which we then use in Subsection \ref{5.2}  to prove Theorems  \ref{2}, \ref{3}, and \ref{positive}.
\subsection{Proof of the Main Theorem \ref{main}}\label{5.1}

Let $M^{2n+1}$ be a closed, non-negatively curved odd GKM$_3$ manifold. As shown in Proposition \ref{valence}, any square in the graph, $\bar{\Gamma}_M$, has valence one or two.
 
 We now show how to construct an abstract  (even-dimensional) GKM$_3$  graph, $(\Gamma, \alpha)$, from a geometric odd  GKM$_3$ graph, $(\bar{\Gamma}_M, \bar{\alpha}_M)$, for which the squares in $\bar{\Gamma}_M$ are only of valence one or two. We define $(\Gamma, \alpha)$ to be {\em the  graph obtained from the geometric odd GKM$_3$ graph} $(\bar{\Gamma}_M, \bar{\alpha}_M)$, by

 \begin{enumerate}
 \item replacing a circle by  a vertex;
\item replacing a $2$-valent square, $s$, in $2$ grounded edges by a single edge, $e$;
\item  labeling the edge $e$ obtained from $s$ by the weight of the square, $\bar{\alpha}_M(s)$, that is,  defining 
$\alpha(e):=\bar{\alpha}_M(s)$ to be the axial function; and
\item deleting all floating edges, together with their squares.
\end{enumerate}

In order to facilitate discussion of the new graph, $\Gamma$, we  denote the application, $\pi:\bar{\Gamma}_M\rightarrow \Gamma$,  of these changes to $\bar{\Gamma}_M$, respectively,  as follows:
\begin{enumerate}
\item $\pi(c)=v$, where $c\in  V_{{\scaleto{{\scaleto{\circ}{4pt}}}{4pt}}}(\bar{\Gamma}_M)$ and $v$ is its image in $\Gamma$;
\item $\pi(s)=e$, for $s\in V^2_{{\scaleto{\square}{4pt}}}$, where $s$ is the square connected to $c_1$ and $c_2$, and $e$ is its image in $\Gamma$, with $i(e)=\pi(c_1)$ and $t(e)=\pi(c_2)$. 
\item $\alpha(\pi(s)):=\bar{\alpha}_M(s)$,  
for $s\in V^2_{{\scaleto{\square}{4pt}}}$.
\end{enumerate}

Before we show that $(\Gamma, \alpha)$ is an abstract GKM$_3$ graph, we illustrate the process of obtaining these graphs in the following examples.
\begin{example}\label{examples} 
Applying this construction to the odd-dimensional graphs in Theorem \ref{graphs}, we see that we obtain the following graphs.
\begin{enumerate}
\item The image of the graph in Part $(1)$ is a vertex:

\hskip .2cm
\begin{center}
\begin{tikzpicture}[square/.style={regular polygon,regular polygon sides=4}]

\draw[line width=.2mm] (-2,1) -- ++(2,0) -- ++(2,0);
\node at (-2,1)[square,fill,inner sep=2pt]{};
\node at (0,1)[circle,fill,inner sep=2pt]{};
\node at (2,1)[square,fill,inner sep=2pt]{};

\draw[line width=.5mm, ->] (4,1) -- (6,1);

\node at (9,1)[circle, fill, inner sep = 2pt]{};
\node at (10.2,1)[circle, inner sep=0pt]{};
\end{tikzpicture}.
\end{center}
\hskip .2cm

\item The image of the graphs in Part $(2)$ are an interval and a lune, respectively:

\hskip .2cm

\begin{center}
\begin{tikzpicture}[square/.style={regular polygon,regular polygon sides=4}]

\draw[line width=.2mm] (-2,1) -- ++(0,-2) -- ++(2,0) -- ++(2,0) -- ++(0,2);
\node at (-2,1)[square,fill,inner sep=2pt]{};
\node at (-2,-1)[circle,fill,inner sep=2pt]{};
\node at (0,-1)[square,fill,inner sep=2pt]{};
\node at (2,-1)[circle,fill,inner sep=2pt]{};
\node at (2,1)[square,fill,inner sep=2pt]{};

\draw[line width=.5mm, ->] (4,0) -- (6,0);

\draw[line width=.2mm] (8,0) -- (10,0);
\node at (8,0)[circle, fill, inner sep = 2pt]{};
\node at (10,0)[circle, fill, inner sep = 2pt]{};

\end{tikzpicture}\,\,\,\, ,

\end{center}

\hskip .2cm

\begin{center}
\begin{tikzpicture}[square/.style={regular polygon,regular polygon sides=4}]

\draw[line width=.2mm] (-2,1) to[out=30,in=180] (0,1.5) to[out=0, in=150] (2,1) to[out=210, in=0] (0,0.5) to[out=180, in=330] (-2,1);
\node at (-2,1)[circle,fill,inner sep=2pt]{};
\node at (0,1.5)[square,fill,inner sep=2pt]{};
\node at (0,0.5)[square,fill,inner sep=2pt]{};
\node at (2,1)[circle,fill,inner sep=2pt]{};

\draw[line width=.5mm, ->] (4,1) -- (6,1);

\draw[line width=.2mm] (8,1) to[out=30,in=150] (10,1) to[out=210,in=330] (8,1);
\node at (8,1)[circle, fill, inner sep=2pt]{};
\node at (10,1)[circle, fill, inner sep=2pt]{};

\end{tikzpicture}\, .

\end{center}
\hskip .2cm

\item The image of the graph in Part $(3)$ is a triangle: 

\hskip .2cm

\begin{center}
\begin{tikzpicture}[square/.style={regular polygon,regular polygon sides=4}]

\draw[line width=.2mm] (-2,1) -- ++(2,0) -- ++(2,0) -- ++(-2,3.5) -- ++(-2,-3.5);
\node at (-2,1)[circle,fill,inner sep=2pt]{};
\node at (0,1)[square,fill,inner sep=2pt]{};
\node at (2,1)[circle,fill,inner sep=2pt]{};
\node at (0,4.5)[circle,fill,inner sep=2pt]{};
\node at (1,2.75)[square,fill,inner sep=2pt]{};
\node at (-1,2.75)[square,fill,inner sep=2pt]{};

\draw[line width=.5mm, ->] (4,2.75) -- (6,2.75);

\draw[line width=.2mm] (8,1.875) -- (10,1.875) -- (9,3.625) -- (8,1.875);

\node at (8,1.875)[circle, fill, inner sep = 2pt]{};
\node at (10,1.875)[circle, fill, inner sep = 2pt]{};
\node at (9,3.625)[circle, fill, inner sep = 2pt]{};

\end{tikzpicture}\, .

\end{center}
\hskip .2cm

\item The image of the graph in Part $(4)$ is a quadrangle:

\hskip .2cm
\begin{center}
\begin{tikzpicture}[square/.style={regular polygon,regular polygon sides=4}]

\draw[line width=.2mm] (-2,1) -- ++(0,-2) -- ++(2,0) -- ++(2,0) -- ++(0,2) -- ++(0,2) -- ++(-2,0) -- ++(-2,0) -- ++(0,-2);
\node at (-2,1)[square,fill,inner sep=2pt]{};
\node at (-2,-1)[circle,fill,inner sep=2pt]{};
\node at (0,-1)[square,fill,inner sep=2pt]{};
\node at (2,-1)[circle,fill,inner sep=2pt]{};
\node at (2,1)[square,fill,inner sep=2pt]{};
\node at (-2,3)[circle,fill,inner sep=2pt]{};
\node at (0,3)[square,fill,inner sep=2pt]{};
\node at (2,3)[circle,fill,inner sep=2pt]{};

\draw[line width=.5mm, ->] (4,1) -- (6,1);

\draw[line width=.2mm] (8,0) -- (10,0) -- (10,2) -- (8,2) -- (8,0);

\node at (8,0)[circle, fill, inner sep = 2pt]{};
\node at (10,0)[circle, fill, inner sep = 2pt]{};
\node at (10,2)[circle, fill, inner sep = 2pt]{};
\node at (8,2)[circle, fill, inner sep = 2pt]{};

\end{tikzpicture}\, .

\end{center}
\end{enumerate}
\end{example}

\begin{lemma}\label{lem:GammaisaGKMgraph} Let $(\bar{\Gamma}_M, \bar{\alpha}_M)$ be the odd GKM$_3$ graph corresponding to the closed, non-negatively curved GKM$_3$ manifold, $M^{2n+1}$.
 Let $(\Gamma, \alpha)$ be the graph obtained from  $(\bar{\Gamma}_M, \bar{\alpha}_M)$. Then $(\Gamma,  \alpha)$ admits a connection, $\nabla$, such that  $(\Gamma, \alpha, \nabla)$ is an abstract GKM$_3$ graph.
\end{lemma}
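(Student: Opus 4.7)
The plan is to verify in turn each of the four conditions in Definition~\ref{defn:GKMkgraph}, together with the two axioms for a connection (Definition~\ref{defnconnection}). Condition~(2), $\alpha(\bar e)=\alpha(e)$, is immediate since $\alpha(\pi(s))=\bar{\alpha}_M(s)$ does not depend on the orientation of the edge. Condition~(3), linear independence of any three weights emanating from a vertex, is an immediate consequence of the odd GKM$_3$ hypothesis on $M$: the edges at $v=\pi(c)$ correspond to a subset of the weights of the isotropy representation at the fixed circle $c$, and any three of these weights are linearly independent.

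To construct the connection $\nabla$ on $\Gamma$ and verify Condition~(4), I will transport via the canonical odd-dimensional connection $\bar{\nabla}_M$ provided by Proposition~\ref{lem:odddimconnection}. Given distinct edges $e,f\in E_v$ at $v=\pi(c)$, write $s=\pi^{-1}(e)$ and $s'=\pi^{-1}(f)$ for the corresponding two-valent squares at $c$, and let $c'\in V_{\scaleto{{\scaleto{\circ}{4pt}}}{4pt}}(s)$ be the other circle incident to $s$. I then set
\[
\nabla_e f := \pi\bigl((\bar{\nabla}_M)_{c,c',s}(s')\bigr),\qquad \nabla_e e := \bar e.
\]
The weight identity $\alpha(\nabla_e f)=\pm\alpha(f)+c_{e,f}\alpha(e)$ is then an immediate translation of part~(3) of Definition~\ref{GKMgraph}, and the involutivity $\nabla_{\bar e}=\nabla_e^{-1}$ descends from part~(2) of the same definition.

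The main obstacle is Condition~(1): showing that $\Gamma$ is $(n-k)$-valent requires that the number of floating edges at every circle of $\bar{\Gamma}_M$ is a common constant $k$, and it is the same preservation statement that is needed to ensure the $\nabla_e f$ above actually lands on a two-valent (rather than a deleted one-valent) square. Concretely, I must show that for any two-valent square $s$ at $c$ with partner $c'$, the bijection $(\bar{\nabla}_M)_{c,c',s}$ preserves valence. The key is to invoke the GKM$_3$ hypothesis: for any second square $s'$ at $c$, the weights $\bar{\alpha}_M(s),\bar{\alpha}_M(s')$ cut out a codimension-two subtorus whose fixed-point component through $c$ is a $T$-invariant, totally geodesic, hence non-negatively curved five-dimensional odd GKM$_3$ submanifold $N^5\subset M$. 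The three-dimensional submanifolds represented by $s$ and $s'$ both lie in $N^5$, so $s$ and $s'$ appear with their original valences in the graph of $N^5$. By Theorem~\ref{graphs}, this graph is one of the five explicit shapes listed, and a short inspection of those shapes shows that whenever a circle carries a two-valent square, the circle at the other end of that square has exactly the same numbers of floating and grounded neighbors. Applying this at every edge of $\bar{\Gamma}_M$ and using connectedness (Proposition~\ref{props}), the floating count is globally constant, simultaneously settling Condition~(1) and the well-definedness of $\nabla$.
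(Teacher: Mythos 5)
Your proposal is correct and follows essentially the same route as the paper: verify the axial conditions directly, define $\nabla$ by transporting the canonical odd connection $\bar{\nabla}_M$ via $\pi$, and establish both the constancy of the floating-edge count and the well-definedness of $\nabla$ by restricting to the $2$-dimensional faces (the totally geodesic $5$-dimensional odd GKM submanifolds), invoking the classification in Theorem \ref{graphs} to see that the connection preserves square valence, and then using connectedness of $\bar{\Gamma}_M$. The only cosmetic difference is that the paper phrases the face inspection as "the unique face shape containing a floating edge has only floating and one grounded edge," whereas you phrase it as equality of floating/grounded counts at the two circles of a $2$-valent square; these are the same observation.
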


\begin{proof} 

Our goal is to show that $\Gamma$  is an abstract GKM$_3$ graph. Recall that by Definition \ref{defn:GKMkgraph}, a GKM$_3$ graph consists of a triple $(\Gamma, \alpha, \nabla)$  such that $\Gamma$ satisfies Properties $1$ and $2$ of Definition \ref{defn:GKMkgraph}, the axial function $\alpha$ satisfies Property $4$ of Definition \ref{defn:GKMkgraph}, and that a connection, $\nabla$, exists and satisfies Property $3$ of Definition \ref{defn:GKMkgraph}.

We begin by showing that $\Gamma$ satisfies Properties $1$ and $2$ of Definition \ref{defn:GKMkgraph}.
To prove Property $1$, 
we first claim that the graph $\Gamma$ obtained from $\bar{\Gamma}_M$ is $m$-valent, for some $m\leq n$. By Proposition \ref{props}, the odd-dimensional graph $\bar{\Gamma}_M$ is connected. So, to prove this claim
 it suffices to show that given two circles, $c_1, c_2\in V_{{\scaleto{{\scaleto{\circ}{4pt}}}{4pt}}}(\bar{\Gamma}_M)$,  joined by a square, $s_0$, the number of floating edges emanating from $c_1$ and $c_2$ is the same. 
Let $e, f\in E_{c_1}(\bar{\Gamma}_M)$, such that $e$ contains the square $s_0$, and $f$ is a floating edge with square $s$. Observe that because of the GKM$_3$ condition, $e$ and $f$ uniquely determine a $2$-dimensional face of $\bar{\Gamma}_M$.  Moreover, due to the linear dependence condition on the connection, it follows that $(\bar{\nabla}_M)_{c_1, c_2, s_0}(s)$ is a square belonging to the same $2$-dimensional face as $s$. 
  Now, Theorem \ref{graphs} tells us that for odd GKM$_3$ graphs corresponding to odd GKM$_3$ manifolds of non-negative curvature, there is only one  graph of a $2$-dimensional face that has a floating edge. So,  this graph has two floating edges, two circle vertices, and one grounded edge. This then implies that the connection sends a $1$-valent square to a $1$-valent square. Since the graphs of all the other $2$-dimensional faces have no floating edges, it follows that the connection sends   a $2$-valent square to a $2$-valent square.   We may conclude that at every circle in $\bar{\Gamma}_M$ there are exactly the same number of floating edges, and thus all vertices in the graph $\Gamma$ have the same valency, and Property $1$  holds.
 Note that the weights of the graph $\Gamma$ are still $3$-independent, and so Property $2$ holds.  

By the definition of the axial function $\alpha(e):= \bar{\alpha}_M(s)$, where $\pi(s)=e$, we see that Property $4$ of Definition \ref{defn:GKMkgraph} also holds.

We now need to show that we have a connection on $\Gamma$ that satisfies Property $3$.
In order to do so, we observe that we still have a notion of $2$-dimensional faces in $\Gamma$. Namely, given two edges $e,f$ attached to some vertex $v$ in $\Gamma$, there is a unique two-dimensional face in the odd-dimensional graph, $\bar{\Gamma}_M$, containing the edges corresponding to $e$ and $f$.  Moreover, this two-dimensional face in  $\bar{\Gamma}_M$ has no floating edges, since the graphs with floating edges in Theorem \ref{graphs} do not survive to graphs with a two-dimensional face, as can be seen in Example   \ref{examples}.
 We claim that this gives a well-defined connection on $\Gamma$, 
by sliding edges along edges inside these two-dimensional faces in the usual fashion. In fact,  we may directly translate the conditions in Definition \ref{GKMgraph} satisfied by the connection on $\bar{\Gamma}_M$, by making the following substitutions:
 $$(\bar{\nabla}_M)_{c_1, c_2, s_0}(s)\mapsto \nabla_e(f), $$
  where 
 $e=\pi(s_0)$, and $f=\pi(s)$.  
 With this definition, it is straightforward to verify that the connection satisfies both Definition \ref{defnconnection} and Property $3$ of Definition  \ref{defn:GKMkgraph}. 
\end{proof}

\begin{remark}  Lemma \ref{lem:GammaisaGKMgraph} tells us that  the number of floating edges in $\bar{\Gamma}_M$ is independent of the vertex. This fact is reflected in the statement of Theorem \ref{main1}. 
\end{remark}

We now show how to express the equivariant cohomology of the geometric odd GKM$_3$ graph $(\bar{\Gamma}_M, \bar{\alpha}_M)$ in terms of the equivariant cohomology of the abstract graph $(\Gamma, \alpha)$ obtained from $(\bar{\Gamma}_M, \bar{\alpha}_M)$. Recall that for an $n$-valent,  abstract GKM graph $(\Gamma,\alpha,\nabla)$, we say it is {\em orientable} if $H^{2n}(\Gamma,\alpha) \neq 0$.

In particular, for odd GKM$_3$ graphs, we  prove the following result. 

\begin{theorem}\label{main1} Let $M^{2n+1}$ be a closed, non-negatively curved odd GKM$_3$ manifold and $(\bar{\Gamma}_M, \bar{\alpha}_M)$ its graph. Let $k$ be the number of floating edges at a vertex in $\bar{\Gamma}_M$, let $(\Gamma, \alpha)$ be the  graph  obtained from $(\bar{\Gamma}_M, \bar{\alpha}_M)$,   and assume that $(\Gamma, \alpha)$ is orientable.
Then we have
\[
H^*_T(M) \cong H^*_T(\Gamma,\alpha) \otimes H^*(S^{2k+1})
\]
as $S({\mathfrak{t}}_{\mathbb{Q}}^*)$-algebras, where the $S({\mathfrak{t}}_{\mathbb{Q}}^*)$-algebra structure on $H^*_T(\Gamma,\alpha) \otimes H^*(S^{2k+1})$ is the tensor product of the standard $S({\mathfrak{t}}_{\mathbb{Q}}^*)$-algebra structure on $H^*_T(\Gamma,\alpha)$ and the trivial one on  $H^*(S^{2k+1})$.  
Therefore, we obtain
\[
H^*(M) \cong H^*(\Gamma,\alpha)\otimes H^*(S^{2k+1}).
\]
\end{theorem}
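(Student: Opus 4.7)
The plan is to use the description of $H^*_T(M)$ given by Theorem~4.6 of \cite{H}, recalled in Section~\ref{sec:odddimGKM}: $H^*_T(M)$ embeds into $\bigoplus_{c\in V_\circ} S(\mathfrak{t}_{\mathbb{Q}}^*)\otimes H^*(S^1)$ as those tuples $(P_c+Q_c\theta)_{c}$ satisfying \eqref{D2} and \eqref{D3} at every $s\in V_\square$. Since $M$ is non-negatively curved, Proposition~\ref{valence} says every square has valence one or two. Relation \eqref{D2} is vacuous at $1$-valent squares and at each $2$-valent square reads $\alpha(e)\mid P_{i(e)}-P_{t(e)}$ on the corresponding edge $e$ of $\Gamma$, which is exactly the defining condition of $H^*_T(\Gamma,\alpha)$. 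Relation \eqref{D3} at each $1$-valent square forces the floating-edge weight $\beta$ at $c$ to divide $Q_c$; the GKM$_3$ hypothesis renders the $k$ floating-edge weights $\beta_{v,1},\ldots,\beta_{v,k}$ at $v=\pi(c)$ pairwise coprime in the UFD $S(\mathfrak{t}_{\mathbb{Q}}^*)$, so their product $\Omega_v:=\prod_i\beta_{v,i}$ divides $Q_c$ and I can write $Q_c=\Omega_v\tilde{Q}_v$.

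Next I would track how $\Omega_v$ transforms along the connection. The argument in the proof of Lemma~\ref{lem:GammaisaGKMgraph} showing that the number of floating edges is the same at every vertex in fact shows that, for each $2$-valent square $s_0$ joining $c_1$ and $c_2$, the bijection $(\bar\nabla_M)_{c_1,c_2,s_0}$ sends floating squares at $c_1$ to floating squares at $c_2$ with weights related by $\pm(\cdot)+c\,\bar\alpha_M(s_0)$. Multiplying these relations over all floating edges yields $\Omega_{v_1}\equiv\pm\Omega_{v_2}\pmod{\alpha(e)}$ along $e=\pi(s_0)$. The orientability hypothesis on $(\Gamma,\alpha)$ will then be used to produce signs $\epsilon_v\in\{\pm1\}$ so that $(\epsilon_v\Omega_v)_v$ descends to a class $[\Omega]\in H^{2k}_T(\Gamma,\alpha)$, and simultaneously so that the same $\epsilon_v$'s absorb the $\pm$ ambiguity appearing in \eqref{D3} at $2$-valent squares: after replacing each $Q_c$ by $\epsilon_{\pi(c)}Q_c$, the $2$-valent relations become $\alpha(e)\mid Q_{v_1}-Q_{v_2}$.

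With signs resolved, $\alpha(e)\mid Q_{v_1}-Q_{v_2}$ together with $Q_v=\Omega_v\tilde{Q}_v$ and $\Omega_{v_1}\equiv\Omega_{v_2}\pmod{\alpha(e)}$ reduces, via coprimality of $\alpha(e)$ with each $\beta_{v_1,i}$ (GKM$_3$), to $\alpha(e)\mid\tilde{Q}_{v_1}-\tilde{Q}_{v_2}$. Hence $(\tilde{Q}_v)_v\in H^*_T(\Gamma,\alpha)$, and conversely any such tuple yields a valid $Q$. This produces an $S(\mathfrak{t}_{\mathbb{Q}}^*)$-module splitting $H^*_T(M)\cong H^*_T(\Gamma,\alpha)\oplus H^*_T(\Gamma,\alpha)\cdot[\Omega\theta]$, and because $\theta^2=0$ in $H^*(S^1)$, the class $[\Omega\theta]$ squares to zero and lives in degree $2k+1$, upgrading the splitting to the claimed $S(\mathfrak{t}_{\mathbb{Q}}^*)$-algebra isomorphism $H^*_T(M)\cong H^*_T(\Gamma,\alpha)\otimes H^*(S^{2k+1})$. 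Quotienting both sides by the ideal generated by $S^+(\mathfrak{t}_{\mathbb{Q}}^*)$, using Proposition~\ref{Ecohom} on the left and Definition~\ref{defn:cohomgraph} on the right, yields $H^*(M)\cong H^*(\Gamma,\alpha)\otimes H^*(S^{2k+1})$. The main obstacle I anticipate is the orientability step: the excerpt defines orientability merely as $H^{2n}(\Gamma,\alpha)\neq 0$, so extracting the $\{\pm1\}$-cocycle on $\Gamma$ that reconciles \emph{both} the connection signs in $\Omega_{v_1}\equiv\pm\Omega_{v_2}$ and the orientation signs in \eqref{D3} will require identifying the $\mathbb{Z}/2$-obstruction governing both phenomena and showing that its vanishing is equivalent to the survival of the relevant weight-product class to the top of ordinary graph cohomology.
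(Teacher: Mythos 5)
Your reduction of the divisibility relations via Proposition \ref{valence}, the identification of the even part with $H^*_T(\Gamma,\alpha)$, the factorization $Q_c=\Omega_v\tilde Q_v$, and the coprimality computation showing $\alpha(e)\mid \tilde Q_{v_1}-\tilde Q_{v_2}$ all match the paper's argument. But the crux of the theorem is exactly the step you defer to the end: producing a globally consistent class of the form $(\epsilon_v\Omega_v\,\theta)_v$ in $H^{2k+1}_T(M)$, i.e.\ reconciling the geometric $\pm$ signs in relation \eqref{D3} (which come from boundary orientations of the $N_s/T$ and are not freely adjustable --- see the paper's remark that even for $S^1\times\ccc P^2$ no consistent orientation of the fixed circles exists) with the connection signs, using only the hypothesis $H^{\mathrm{top}}(\Gamma,\alpha)\neq 0$. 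You acknowledge you do not know how to extract the required $\{\pm1\}$-data from that hypothesis, and no argument is offered; since the example in Subsection \ref{ss5} shows the conclusion genuinely fails without graph orientability, this step cannot be finessed locally, so as written the proposal has a gap precisely where the orientability hypothesis must enter.

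The paper closes this gap by a different mechanism that never constructs signs combinatorially: orientability gives $H^{2(n-k)}(\Gamma,\alpha)\neq 0$, hence $H^{2(n-k)}(M)\neq 0$ via the even-degree identification; Poincar\'e duality of the closed orientable $(2n+1)$-manifold $M$ then forces $H^{2k+1}(M)\neq 0$, and surjectivity of $H^*_T(M)\to H^*(M)$ (Proposition \ref{Ecohom}, equivariant formality) lifts this to a nonzero $\omega\in H^{2k+1}_T(M)$. Degree and divisibility considerations then show any such $\omega$ has $\omega_c=a_c\prod_i\alpha(s_i(c))$, and connectivity of the graph plus pairwise independence of the weights show $a_c\neq 0$ for every $c$; the consistent ``signs'' you are looking for are thus obtained for free from the topology of $M$ rather than from a $\mathbb{Z}/2$-cocycle on $\Gamma$. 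After that, your multiplication-by-$\omega$ argument for injectivity and surjectivity onto $H^{\mathrm{odd}}_T(M)$ is essentially the paper's. If you want to salvage your route, you would need to prove the equivalence you conjecture between $H^{2(n-k)}(\Gamma,\alpha)\neq 0$ and the existence of the compatible sign system; the paper's detour through Poincar\'e duality is exactly how it avoids having to do this.
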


\begin{proof}[Proof of Theorem \ref{main1}]
By Proposition \ref{valence}, any square in the odd GKM graph of $M$ has valence one or two. We denote by $V_{\scaleto{\square}{4pt}}^1$ and $V_{\scaleto{\square}{4pt}}^2$ the sets of squares with valence one and two, respectively. For a square $s\in V_{\scaleto{\square}{4pt}}^1$ we denote the unique circle connected to $s$ by $c(s)$, whereas for $s\in V_{\scaleto{\square}{4pt}}^2$ we denote the two circles by $c_1(s)$ and $c_2(s)$, with any ordering.

Then  Displays \eqref{D2} and \eqref{D3} reduce to the following divisibility relations for $P_c$ and $Q_c$:
\begin{equation}\label{Div1}
{ P_{c_1(s)} \equiv P_{c_2(s)} \text{ mod } \alpha(s), \text{ and }  Q_{c_1(s)} \equiv \pm Q_{c_2(s)}  \text{ mod } \alpha(s) \text{ for }s\in V_{\scaleto{\square}{4pt}}^2}, \text{ and }
\end{equation}
\begin{equation}\label{Div2}
Q_{c(s)} \equiv 0 \text{ mod } \alpha(s) \textrm{ for } s\in V_{\scaleto{\square}{4pt}}^1.
\end{equation}
Then the equivariant cohomology of $M$ is given by
$$
H^*_T(M) \cong \{(P_c + Q_c\theta)_{c\in V_{\scaleto{{\scaleto{\circ}{4pt}}}{4pt}}} \left|  
P_c, Q_c \textrm{ satisfy Relations \eqref{Div1} and  \eqref{Div2}} \}.\right.
$$

By comparing the divisibility relations in Displays \eqref{Div1} and \eqref{Div2} for $H^*_T(M)$  and the description of the equivariant cohomology in terms of the GKM graph in even dimensions in Display \eqref{D2},  we see that the divisibility relations imposed by grounded edges in the odd-dimensional GKM graph $\bar{\Gamma}_M$ are precisely those imposed by the edges in $\Gamma$. 
As a consequence, we see that we obtain an isomorphism 
\begin{equation}\label{evenec}
\psi:H^{even}_T(M) \longrightarrow H^*_T(\Gamma,\alpha, \nabla).
\end{equation} Moreover, we also see by Proposition \ref{Ecohom} and Definition \ref{defn:cohomgraph}  that   $H^{even}(M) = H^*(\Gamma,\alpha, \nabla)$.

For cohomology in odd dimensions, note that the $Q_c$ have to be divisible by all $k$ weights of the floating edges at $c$, so 
 $\mathrm{deg}(Q_c)\geq k$.  Thus $H^{2l+1}_T(\Gamma, \alpha) = 0$ for $l<k$.
On the other hand, as $\Gamma$ is an orientable $(n-k)$-valent graph,   we have that $H^{2(n-k)}(M) = H^{2(n-k)}(\Gamma, \alpha)\neq 0$.  Poincar\'e duality now implies that $H^{2k+1}(M) \neq 0$.  By Proposition \ref{Ecohom}, $H^*_T(M)\rightarrow H^*(M)$ is onto, and  it follows that $H^{2k+1}_T(M)\neq 0$. 
So there  exists a nonzero element $\omega\in H^{2k+1}_T(M)$. 

Our goal is now to show that we may multiply elements of the even dimensional cohomology by $\omega$ to obtain an $S({\mathfrak{t}}_{\mathbb{Q}}^*)$-module isomorphism from $H^{even}_T(M)$ to  $H^{odd}_T(M)$.  Using the divisibility relations in Display \eqref{Div2}, we may express any nontrivial element $\omega$ in $H^{2k+1}_T(M)$ as
$$\omega  =(\omega_c\theta)_{c\in V_{\scaleto{{\scaleto{\circ}{4pt}}}{4pt}}} \in H^{2k+1}_T(M),$$
where
\begin{equation}\label{omegac}
\omega_c   = a_c \, \alpha(s_1(c))\cdot \alpha(s_2(c))\cdot \hdots \cdot  \alpha(s_k(c)),
\end{equation}
$a_c\in \mathbb{Q}$ depends only on the circle $c$, and  $s_1(c),\hdots, s_k(c)$  are the  squares in the $k$ floating edges connected to $c$.   
The $\omega_c$ must also satisfy the first set of divisibility relations  in Display \eqref{Div1}, and since $\omega\in H^{2k+1}_T(M)$,  this is equivalent to requiring $\omega_{c_1(s)}\equiv\pm \omega_{c_2(s)} \mod \alpha(s)$, for $s\in V_{\scaleto{\square}{4pt}}^2$.

We now want to show that $a_c\neq 0$ for all $c \in V_{{\scaleto{{\scaleto{\circ}{4pt}}}{4pt}}}$.   
We will argue by contradiction. Assume  that  $a_{c'}=0$ for some $c'$. Since $\omega$ is non-trivial, there is a $c''\neq c'$ such that $a_{c''}\neq 0$. 
 The connectivity of the graph implies that there must be some $s\in V_{\scaleto{\square}{4pt}}^2$, such that $a_{c_1(s)}=0$, but  $a_{c_2(s)}\neq 0$. Then $0 = \omega_{c_1(s)}$ by Equation \eqref{omegac}. But 
then the divisibility relation in \eqref{Div1} gives us that $\omega_{c_1(s)} \equiv \omega_{c_2(s)}\equiv 0\, \text{ mod }\alpha(s)$, and 
Equation \eqref{omegac}  tells us that $\alpha(s)$ is then a scalar multiple of one of the  $\alpha(s_i)$, $1\leq i\leq k$.  However Part $2$ of  the definition of a GKM-manifold tells us 
$\alpha(s)$ and  $\alpha(s_i)$ are pairwise linearly independent for $1\leq i\leq k$, a contradiction.

We now claim that $a_c\neq 0$ for all $c \in V_{{\scaleto{{\scaleto{\circ}{4pt}}}{4pt}}}$ implies that $\dim(H^{2k+1}_T(M)) = 1$.  Suppose instead that we have two linearly independent elements $\mu, \omega\in H^{2k+1}_T(M)$, where $\mu_c$ and $\omega_c$ are as in Equation \eqref{omegac}. Then for some $c$,  let $\eta_c=\gamma\omega_{c}-\mu_c$, with $\gamma=a_c^{\mu}/a_c^{\omega}$. But then for this same $c$, $\eta_c=0$, contradicting the fact that for any  element of $H^{2k+1}_T(M)$, $a_c\neq 0$ for all $c \in V_{{\scaleto{\circ}{4pt}}}$.

 It now follows that multiplication with $\omega$ defines an  $S({\mathfrak{t}}_{\mathbb{Q}}^*)$-module injection $H^{even}_T(M)\to H^{odd}_T(M)$, that is,  
 
 \begin{equation}\label{odd}
 \begin{aligned} \phi: \, H^{even}_T(M) &\longrightarrow H^{odd}_T(M)\\
 \psi &\longmapsto  \psi \cdot \omega,
 \end{aligned}
 \end{equation}
  with $\omega \in H^{2k+1}_T(M) $. We claim that $\phi$ is an isomorphism. To do so, we must show that $\phi$ is onto. 
  Let $Q=(Q_c\theta)_{c\in V_{\scaleto{\circ}{4pt}}}\in H^{odd}_T(M)$. 
For each $c\in V_{\scaleto{\circ}{4pt}}$, the polynomial $Q_c$ is divisible by any $\alpha(s)$ with $c(s) = c$ for $s \in V_{\scaleto{\square}{4pt}}^1$, and so we can write 
$Q=(Q_c \theta) =  (P_c \,\omega_c \theta)=P\cdot \omega$,  for some $P=(P_c)$.
  Then to show that $\phi$ is onto,  we have to show that $(P_c)_{c \in V_{\scaleto{\circ}{4pt}}}\in H^{even}_T(M)$. That is, we must verify that the $P_c$ satisfy the divisibility relation \eqref{Div1} for all $c\in V_{\scaleto{\circ}{4pt}}$. Let $s'\in V_{\scaleto{\square}{4pt}}^2$ be arbitrary. Then $\alpha(s')$ divides both $Q_{c_1(s')}\pm Q_{c_2(s')}$ and  $\omega_{c_1(s')} \pm \omega_{c_2(s')}$, where $\pm$ is taken to be the same sign in both expressions. Then we compute
\begin{align*}
Q_{c_1(s)}  \pm Q_{c_2(s)}  &= P_{c_1(s)} \omega_{c_1(s)} \pm P_{c_2(s)} \omega_{c_2(s)} \\
&= (P_{c_1(s)} - P_{c_2(s)})\, \omega_{c_1(s)} + P_{c_2(s)}(\omega_{c_1(s)} \pm \omega_{c_2(s)}),
\end{align*}
and the divisibility assumptions imply that $\alpha(s')$ divides $(P_{c_1(s')} - P_{c_2(s')})\, \omega_{c_1(s')}$. By the same argument used to show that $a_c\neq 0$ for all $c\in V_{\scaleto{\circ}{4pt}}$, $\alpha(s')$ does not divide $\omega_{c_1(s')}$, since $s'\in V_{\scaleto{\square}{4pt}}^2$. So $\alpha(s')$ has to divide $P_{c_{1}(s')}-P_{c_{2}(s')}$, which is precisely the divisibility relation \eqref{Div1}. So, we have shown that $(P_c)_{c\in V_{\scaleto{\circ}{4pt}}}\in H^{even}_T(M)$.  Hence, $\phi$ is onto and multiplication by $\omega$ defines an $S({\mathfrak{t}}_{\mathbb{Q}}^*)$-module isomorphism.

 We can now prove that $H^*_T(M)\cong H^*_T(\Gamma,\alpha)\otimes H^*(S^{2k+1})$, by extending  
 the isomorphism in Display \eqref{evenec} via the isomorphism in Display \eqref{odd} 
 to an $S({\mathfrak{t}}_{\mathbb{Q}}^*)$-algebra isomorphism
\begin{align*}
\Psi : \, & H^*_T(\Gamma,\alpha)\otimes H^*(S^{2k+1}) \longrightarrow H^*_T(M)\,\cr
 &\gamma\otimes id + \beta\otimes\, \mu_{S^{2k+1}} \longmapsto  \psi(\gamma) + \phi(\psi(\beta))= \psi(\gamma) + \psi(\beta)\omega,
\end{align*}
where $\gamma, \beta\in H^*_T(\Gamma,\alpha)$, and $\mu_{S^{2k+1}}$ is the volume form of $S^{2k+1}$.

The second statement of the theorem then follows immediately from Proposition \ref{Ecohom} and Definition \ref{defn:cohomgraph} because the $S({\mathfrak{t}}_{\mathbb{Q}}^*)$-module structure on $H^*(\Gamma,\alpha)\otimes H^*(S^{2k+1})$ only exists on the first factor.
\end{proof}

Theorem \ref{main} is now a direct consequence of Theorem \ref{main1}.

\subsection{Orientability of the Associated Graph ${\bf \Gamma}$}\label{ss5}

Without the assumption of orientability of the associated graph, the conclusion of the Main Theorem \ref{main1} does not hold. 
We include an example here.
Consider
$$M^9= S^2(1)\times S^2(1)\times S^2(1)\times S^3(1)$$
with the product metric and with a $T^4$-action given as a product of four $T^1$-actions, where the action on each $S^2$ is by rotation, and  on $S^3\subset \mathbb{C}^2$ is fixed point homogeneous, namely, 
$$(e^{i\theta}, (z_1, z_2))\mapsto (e^{i\theta}z_1, z_2).$$
Then consider the $\mathbb{Z}_2$-action on $M$, given by the antipodal map on each $S^2$ and on $S^3$ by $$(z_1, z_2)\mapsto (z_1, \bar{z}_2).$$

As the $\mathbb{Z}_2$-action on $M$ commutes with the $T^4$ action and is free, orientation-preserving, and by isometries,  
 the quotient
$$N^{4k+1} =M/(\mathbb{Z}_2)$$
is a non-negatively curved, closed, orientable manifold with the induced $T^4$-action.

Using the transfer isomorphism (see, for example, Theorem III.2.4 in \cite{Br}), we obtain $H^*(N) = H^*(M)^{\mathbb{Z}_2}$,  and we compute the Betti numbers of $N$ to be
\[
b_i(N) = \begin{cases}1 &\, i = 0,9\\ 3 & \, i = 4,5 \\ 0 &{\textrm{ otherwise}}.\end{cases}
\]

Both $M$ and $N$ are orientable, however, 
the associated GKM graph of $N$ is the quotient of a $4$-dimensional cube, $\text{I}^4/\mathbb{Z}_2$, which is not orientable.

We claim that the $T^4$-action on $N$ is odd GKM$_4$, and hence odd GKM$_3$.
First, the fixed point set of the $T^4$-action on $N$ consists of exactly $4$ circles.
 Second, the condition on the weights is satisfied, since  the action of $T^4$ on $M$ is  odd GKM$_4$, and the condition on the weights of the isotropy representations at the fixed points of $N$ is inherited from $M$. 
 Third,  we can use Proposition \ref{eformal} to see that the $T^4$-action is equivariantly formal: as computed above, the sum of the Betti numbers of $N$ is equal to $8$, as is the sum of the Betti numbers of $N^{T^4}$. 
 So, the claim holds.  
 However, the cohomology ring of $N$ does not split off the cohomology of an odd-dimensional sphere, and so  if we remove the hypothesis on the orientability of  the associated graph $\Gamma$, we see that the conclusion of Theorem \ref{main} does not hold for $N$.

Note that in this example $N$ is not simply-connected, and so we pose the following question:

\begin{question} If $N$ is a closed, simply-connected, odd GKM$_3$ manifold, is the associated graph $\Gamma$ orientable? 
\end{question}

\subsection{Applications of Theorem \ref{main1}} \label{5.2}
We consider some special subcases of Theorem \ref{main1}. Firstly, if the metric on the GKM$_3$ manifold is positively curved, then by Theorem \ref{Rong}, every two-dimensional face of the GKM graph has only one circle. This implies that the GKM graph, $\bar{\Gamma}_M$, is the pinwheel depicted in Example \ref{ex:pinwheel}, and so $\Gamma$ is a single vertex. In particular, a single vertex graph is orientable. Theorem \ref{positive} is then immediate.  
 
 As indicated in the Introduction, the proof of Theorem \ref{2} follows from the proof of Theorem \ref{GKM3+}.
 
Theorem \ref{3} follows in the same way using Theorem \ref{GKM40}. In order to apply Theorem \ref{GKM40}, we need to verify that $\Gamma$, the GKM$_4$ graph obtained from the odd-dimensional GKM graph of $M$, $\bar{\Gamma}_M$, is a graph with {\em small three-dimensional faces} (see Definition 3.5 in \cite{GW2}). As noted in \cite{GW2}, a GKM$_4$ graph that has two-dimensional faces with at most $4$ vertices must have small three-dimensional faces. Since $M$ has non-negative curvature, Theorem \ref{graphs} tells us that the two-dimensional faces of $\bar{\Gamma}_M$ have at most $4$ circle vertices, and hence the two-dimensional faces of $\Gamma$ have at most $4$ vertices. The result follows.

\section{Invariant almost contact structures}\label{s6}

The goal of this section is to prove Theorem \ref{AC} of the Introduction.  We begin by recalling the definition of an almost contact structure.
\begin{definition}[{\bf Almost contact structure}]
An {\em almost contact structure}  $(\phi,\xi,\eta)$ on a $(2n+1)$-manifold $M$ consists of a $(1,1)$-tensor field $\phi$, a vector field $\xi$, and a differential one-form $\eta$ such  that 
 $$\eta(\xi)=1 \, \textrm{  and } \,
\phi^2(X) = -X + \eta(X) \xi,$$
for any vector field $X$ on $M$. Note that the vector field $\xi$, which is called {\em the Reeb vector field}, is uniquely determined by $\phi$ and $\eta$, namely at a point $p$ it is the unique vector $\xi_p$ such that $\phi_p(\xi_p)=0$ and $\eta_p(\xi_p)=1$.
\end{definition}

Given an almost contact structure $(\phi,\xi,\eta)$ on $M^{2n+1}$, if $N^{2k+1}\subset M^{2n+1}$ is a submanifold such that $\xi$ is tangent to $N$, and $\phi$ restricts to a well-defined tensor field on $N$, then $(\phi,\xi,\eta)$ restricts to an almost contact structure on $N$. In this case we call $N$ an {\em almost contact} submanifold.

The following lemma may be well-known, but could not be located by the authors elsewhere in the literature. For
completeness, a proof is presented here.

\begin{lemma}\label{lem:fixedalmostcontact} Let  $(\phi,\xi,\eta)$ be an almost contact structure on a manifold $M$, invariant under the action of a compact Lie group $G$. Then every connected component of the fixed point set $M^G$ of the action is an almost contact submanifold.
\end{lemma}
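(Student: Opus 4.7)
The plan is to verify, at each point $p$ of a connected component $N$ of $M^G$, that the Reeb vector field $\xi$ lies in $T_pN$, that $\phi_p$ preserves $T_pN$, and that the restricted data still satisfies the almost contact identities, so that $(\phi|_N,\xi|_N,\eta|_N)$ is an almost contact structure. The key tool is the standard identification $T_pN = (T_pM)^G$, where the right-hand side denotes the fixed subspace of the isotropy representation of $G$ on $T_pM$.

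First I would use the $G$-invariance of $\xi$ directly: at a fixed point $p$, the condition $g_*\xi_p = \xi_{gp} = \xi_p$ says that $\xi_p$ is $G$-fixed, so $\xi_p \in (T_pM)^G = T_pN$. Hence $\xi$ restricts to a smooth vector field on $N$. Next I would argue the same way for $\phi$: for any $v\in T_pN$ and $g\in G$, $G$-invariance of the $(1,1)$-tensor field $\phi$ gives
\[
g_*\bigl(\phi_p(v)\bigr) = \phi_{gp}(g_*v) = \phi_p(v),
\]
so $\phi_p(v) \in (T_pM)^G = T_pN$. Thus $\phi$ restricts to a well-defined $(1,1)$-tensor field on $N$. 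The one-form $\eta$ restricts to $N$ by pullback along the inclusion, and $\eta|_N(\xi|_N) = \eta(\xi)|_N = 1$. For any vector field $X$ tangent to $N$, the identity $\phi^2(X) = -X + \eta(X)\xi$ holds in $M$, and all three terms are tangent to $N$, so it restricts as the desired identity on $N$.

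It remains to check that $N$ is odd-dimensional, which is forced by the structure itself. Writing $T_pN = (T_pN\cap\ker\eta_p)\oplus \mathbb{R}\xi_p$, the subspace $T_pN\cap\ker\eta_p$ is preserved by $\phi_p$ (since $\phi_p$ preserves both $T_pN$ and $\ker\eta$) and $\phi_p^2 = -\mathrm{id}$ there, so it carries a complex structure and is even-dimensional; adding the line $\mathbb{R}\xi_p$ yields an odd-dimensional $T_pN$, so $N$ has odd dimension as required.

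There is no real obstacle here; the proof is essentially a bookkeeping exercise in $G$-invariance. The only subtlety worth flagging is ensuring that the restricted $\phi$ is genuinely a $(1,1)$-tensor on $N$ and not merely a section of $\mathrm{End}(TM)|_N$, which is precisely what the invariance computation $\phi_p(T_pN)\subset T_pN$ guarantees; once that is in hand, all three almost contact axioms restrict tautologically from $M$ to $N$.
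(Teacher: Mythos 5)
Your proof is correct and follows essentially the same route as the paper: identify $T_pN=(T_pM)^G$ (Kobayashi) and use $G$-invariance of $\phi$ and $\xi$ at a fixed point to see that $\phi$ preserves $T_pN$ and $\xi$ is tangent to $N$, after which the almost contact identities restrict tautologically. The extra verifications you include (restriction of the identities and odd-dimensionality of $N$) are fine but not needed beyond what the paper records.
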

\begin{proof}
Let $N$ be a connected component of $M^G$ and recall $N$ 
is an embedded submanifold of $M$ by work of Kobayashi \cite{Kobayashi}.  Then for every point $p\in N$
, the tangent space of $N$ is given by
\[
T_pN =(T_pM)^G,
\]
the set of vectors fixed by the isotropy representation of $G$ at $p$.  The $G$-invariance of $\phi$ and the fact that $p$ is fixed by $G$ then implies that $\phi$ maps $T_pN$ to itself. For the same reasons, it follows that $\xi$ is tangent to $N$. Thus, the connected components of $M^G$ are almost contact submanifolds.
\end{proof}
Combining Lemma \ref{lem:fixedalmostcontact} and the fact that the almost contact structure gives us a $T$-invariant almost complex structure on $\ker \eta_p$, we obtain the following proposition.
\begin{proposition}\label{6.3} Let $M^{2n+1}$ be an odd GKM manifold with a $T$-invariant almost contact structure $(\phi,\xi,\eta)$. 
Then the following are true.
\begin{enumerate}
\item Every component of the fixed point set of $T$ is an isolated, closed flow line of $\xi$.
\item At any fixed point $p$ of the torus action, the weights of the isotropy representation at $p$ are well-defined elements of ${\mathfrak{t}}_{\mathbb{Q}}^*$.
\end{enumerate}
\end{proposition}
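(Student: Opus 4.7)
The plan is to deduce both parts from Lemma \ref{lem:fixedalmostcontact} together with the algebraic identities satisfied by $(\phi,\xi,\eta)$.

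For Part (1), let $N$ be a connected component of $M^T$. Since $M$ is odd GKM, $N$ is a circle. By Lemma \ref{lem:fixedalmostcontact}, the Reeb vector field $\xi$ is tangent to $N$, and from $\eta(\xi) = 1$ one sees that $\xi$ is nowhere vanishing on $M$. Hence $\xi|_N$ is a nowhere vanishing tangent vector field on the circle $N$, and its flow sweeps out all of $N$, realizing $N$ as a single closed flow line of $\xi$.

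For Part (2), the key point is that $\phi$ restricts to a complex structure on $\ker\eta$. A short calculation starting from $\phi^2(X) = -X + \eta(X)\xi$, by applying $\phi$ to both sides and comparing, yields $\phi(\xi) = 0$ and $\eta\circ\phi = 0$. Consequently $\phi$ preserves $\ker\eta$, and on $\ker\eta$ the identity $\phi^2(X) = -X$ holds, so $\phi|_{\ker\eta}$ is a genuine complex structure.

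Now fix a $T$-fixed point $p$, and let $N$ be the $T$-fixed circle through $p$. As in the proof of Lemma \ref{lem:fixedalmostcontact} one has $T_pN = (T_pM)^T$, and by Part (1) this coincides with $\mathbb{R}\xi_p$. The $T$-invariant splitting
\[
T_pM = \mathbb{R}\xi_p \oplus \ker\eta_p
\]
then identifies $\ker\eta_p$ with the normal space to $N$ at $p$. Since $\phi$ is $T$-invariant, the isotropy representation on $\ker\eta_p$ commutes with the complex structure $\phi_p$; it is therefore $\mathbb{C}$-linear and splits as a direct sum of complex one-dimensional $T$-subrepresentations. Each summand has a well-defined weight in $\mathfrak{t}^*_{\mathbb{Q}}$, not merely in $\mathfrak{t}^*_{\mathbb{Q}}/\{\pm 1\}$, which is the content of Part (2). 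The only step that is not immediate from Lemma \ref{lem:fixedalmostcontact} is the verification that $\phi$ preserves $\ker\eta$; this is the main technical point, though it is a routine computation from the defining identities.
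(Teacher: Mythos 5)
Your proof is correct and follows essentially the same route as the paper: Part (1) via Lemma \ref{lem:fixedalmostcontact} applied to the fixed circles, and Part (2) by observing that $T$-invariance of $\eta$ and $\phi$ makes $\ker\eta_p$ a complex $T$-representation, so the weights lift from ${\mathfrak{t}}_{\mathbb{Q}}^*/\{\pm 1\}$ to ${\mathfrak{t}}_{\mathbb{Q}}^*$. You merely spell out a few steps the paper leaves implicit (that $\xi$ is nowhere zero on the fixed circle, and that $\phi(\xi)=0$, $\eta\circ\phi=0$ force $\phi$ to preserve $\ker\eta$), which is fine.
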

\begin{proof}
By the definition of an odd GKM action, every component of the fixed point set of $T$ is an isolated circle.
 Lemma \ref{lem:fixedalmostcontact} then gives us that the restriction of $(\phi,\xi,\eta)$ to any of these isolated circles is an almost contact submanifold, and the first statement follows. 
 
To prove the second statement, we note that  $\phi$ defines a $T$-invariant almost complex structure on $\ker \eta_p$. Since the almost contact structure is $T$-invariant, we have $\eta_{tp}(dt_p(v)) = 
\eta_p(v)$ for all $v \in T_pM$ and $t \in T$. So at a fixed point 
$p$, $\ker (\eta_p)$ is $T$-invariant. This fact combined with the fact that for each point the tangent space to $\ker (\eta_p)$ has a complex structure, gives us a {\em complex} $T$-representation at $p$, and the second statement follows.
\end{proof}

By Proposition \ref{6.3},  in the presence of a  $T$-invariant almost contact structure on $M$, we have that the weights are well-defined elements of ${\mathfrak{t}}_{\mathbb{Q}}^*$.  This then allows us to slightly modify the odd GKM graph, $\bar{\Gamma}_M$, of the $T$-action on $M$, which we call a \emph{signed odd GKM graph}, 
as follows. We consider the same underlying graph, $\bar{\Gamma}_M$, but now we assign weights to edges, not to squares, that is, to each edge connecting a circle $c$ to a square $s$, we assign the corresponding weight of the isotropy representation at the circle $c$, which is an element in ${\mathfrak{t}}_{\mathbb{Q}}^*$. Regarded modulo $\pm 1$, this weight is the same as the weight assigned to the square $s$ in the original odd-dimensional graph, $\bar{\Gamma}_M$.  If, in addition, the signed weights on the edges emanating from a square sum to $0$, we call such a graph {\em alternating}. This leads us to make the following definition.
\begin{definition}\label{alt/bal} If the signed odd GKM graph induced from the invariant almost contact structure on the odd GKM manifold is alternating, then we  say that the almost contact structure is {\em alternating}.
\end{definition}

The connection of a signed odd GKM graph is modified as follows (cf. Definition \ref{GKMgraph}).
Formally, if we denote the set of edges emanating from a circle $c$ by $E(c)$, then the axial function $\bar{\alpha}_M$ is a collection of maps $E(c)\to {\mathfrak{t}}_{\mathbb{Q}}^*$, for all $c$. The connection can be regarded as a collection of maps ${(\bar{\nabla}}_M)_{c_1,c_2,s_0}:E(c_1)\to E(c_2)$, where $c_1,c_2\in V_{\scaleto{\circ}{4pt}}(s_0)$, and it satisfies that for every edge $e\in E(c_1)$ there exists a constant $c\in {\mathbb{Z}}$ such that
\[
\bar{\alpha}_M((\bar{\nabla}_M)_{c_1,c_2,s_0}(e)) = \bar{\alpha}_M(e) + c\bar{\alpha}_M(e_0),
\]
where $e_0$ is an edge connecting $c_1$ or $c_2$ with $s_0$. 
\begin{remark} If $M^{2n}$ is a closed, non-negatively curved GKM$_k$ manifold admitting an invariant almost complex structure, then the associated classical GKM graph is a {\em signed} GKM graph (see Remark \ref{remark2.18}). At the beginning of Subsection \ref{5.1} we showed how one may obtain an abstract GKM graph from an odd GKM graph whose squares have valence less than or equal to two. 
It follows immediately that we can obtain classical {\em signed} GKM graphs from {\em alternating} odd GKM graphs. 
\end{remark}

We now restate a result from the proof of Lemma 5.6 in \cite{GW1}, noting that this result is independent of curvature.
\begin{lemma}\cite{GW1}\label{5.6} Let $\Gamma$ be an abstract, signed GKM$_3$ graph. Then $\Gamma$ admits no biangles.
\end{lemma}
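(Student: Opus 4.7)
My plan is to argue by contradiction. Suppose $\Gamma$ admits a biangle, that is, a 2-dimensional face $F$ whose vertex set is exactly $\{v_1, v_2\}$. Since $F$ is 2-valent and loops are forbidden (the paper assumes $i(e)\neq t(e)$ for every edge), both edges of $F$ at $v_1$ must terminate at $v_2$; call them $e_1, e_2$. The two edges of $F$ at $v_2$ are then their reverses $\bar e_1, \bar e_2$.

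Because $\Gamma$ is GKM$_3$, Remark \ref{canonical} says the connection is canonical, and faces are $\nabla$-invariant by definition. Therefore $\nabla_{e_1}$ restricts to a bijection $\{e_1, e_2\}\to \{\bar e_1, \bar e_2\}$. The connection axiom $\nabla_e(e)=\bar e$ forces $\nabla_{e_1}(e_1)=\bar e_1$, hence $\nabla_{e_1}(e_2)=\bar e_2$.

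Now apply condition (4) of the signed GKM$_3$ definition (Definition \ref{remark2.18}) to the ordered pair $(e_1, e_2)$:
\[
\alpha(\bar e_2) \;=\; \alpha(\nabla_{e_1} e_2) \;=\; \alpha(e_2) + c\,\alpha(e_1)
\]
for some integer $c$. In the signed setting, edges are coherently oriented (as they come from an almost-complex/almost-contact structure on the normal direction of the corresponding invariant $2$-sphere), and reversing an edge negates its weight: $\alpha(\bar e_2)=-\alpha(e_2)$. Substituting, we obtain
\[
2\alpha(e_2)+c\,\alpha(e_1)=0
\]
in ${\mathfrak t}_{\mathbb Q}^*$, so $\alpha(e_1)$ and $\alpha(e_2)$ are linearly dependent. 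This contradicts the pairwise linear independence of weights at $v_1$ built into the GKM condition underlying GKM$_3$.

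The only subtle point I expect is the sign rule $\alpha(\bar e)=-\alpha(e)$ in the signed setting, since Definition \ref{remark2.18} only explicitly rewrites condition (4). Interpreting signed GKM graphs as those arising from coherently oriented data makes this sign rule forced; without it, condition (4) alone would only yield $c=0$ and the argument would fail to close. All other steps are routine consequences of the canonical connection and face structure on a GKM$_3$ graph.
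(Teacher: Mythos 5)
The paper itself contains no proof of this lemma---it is quoted from the proof of Lemma 5.6 in \cite{GW1}---and your argument is exactly the one used there: slide one edge of the biangle along the other via the canonical connection, so that $\nabla_{e_1}(e_2)=\bar e_2$, and compare $\alpha(\bar e_2)=-\alpha(e_2)$ with $\alpha(e_2)+c\,\alpha(e_1)$ to contradict the pairwise linear independence of the weights at a vertex. The sign rule you flag is indeed the intended meaning of ``signed'' (in \cite{GW1} the weights are genuine elements of ${\mathfrak{t}}_{\mathbb{Q}}^*$ satisfying $\alpha(\bar e)=-\alpha(e)$, as for an invariant almost complex structure), and you were right to make it explicit: Definition \ref{remark2.18} of the present paper is silent about condition (2) of Definition \ref{defn:GKMkgraph}, and with the unmodified convention $\alpha(\bar e)=\alpha(e)$ the computation would only yield $c_{e_1,e_2}=0$, so the statement would not follow.
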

Recall that we denote by $\Sigma^k$ the
orbit space of the linear, effective action of the $k$-dimensional torus on $S^{2k}$.
The following corollary to Lemma \ref{5.6} is immediate (cf. the proof of Lemma 7.1 in \cite{GW2}).
\begin{corollary}\label{cor5.6} Let $\Gamma$ be an abstract,  signed GKM$_3$ graph. Then there are no maximal simplices in $\Gamma$ 
 with the combinatorial type of $\Sigma^k$.
\end{corollary}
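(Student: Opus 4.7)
The plan is to reduce the corollary directly to Lemma \ref{5.6} after recording the combinatorial structure of $\Sigma^k$. Writing $S^{2k}\subset \rrr\oplus\ccc^k$ with the standard product $T^k$-action on $\ccc^k$, the only $T^k$-fixed points are the two poles, and the invariant $2$-spheres joining them are exactly the $k$ coordinate spheres $S^{2k}\cap (\rrr\oplus\ccc_i)$. Hence $\Sigma^k$ has precisely two vertices connected by $k$ parallel edges, and for each pair $1\leq i<j\leq k$ the $2$-face spanned by the $i$-th and $j$-th edges has combinatorial type $\Sigma^2$, i.e., it is a biangle.

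With this combinatorial picture in hand, suppose for contradiction that $F\subset \Gamma$ is a maximal simplex of combinatorial type $\Sigma^k$ with $k\geq 2$. Pick any two of the $k$ edges of $F$ at one of its two vertices. Since $\Gamma$ is GKM$_3$, the canonical connection produces a unique $2$-dimensional face of $\Gamma$ containing these two edges, and because $F$ is itself invariant under the connection this $2$-face is contained in $F$. But inside $F\cong \Sigma^k$ the span of any two edges is a $\Sigma^2$-face, which is a biangle. Thus $\Gamma$ would contain a biangle, contradicting Lemma \ref{5.6}. The cases $k=0$ and $k=1$ are vacuous, since $\Sigma^0$ (two isolated vertices) and $\Sigma^1$ (a single edge with two endpoints) cannot appear as maximal simplices in a connected signed GKM$_3$ graph of the type considered in Theorem \ref{AC}.

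The only content beyond Lemma \ref{5.6} is the combinatorial observation that every $\Sigma^k$ with $k\geq 2$ contains biangles among its $2$-faces, together with the standard fact that any two edges at a vertex of a GKM$_3$ graph span a unique $2$-face via the canonical connection. I do not anticipate any serious obstacle; the argument is essentially a one-line application of Lemma \ref{5.6}, with the combinatorics of $\Sigma^k$ doing the only real work.
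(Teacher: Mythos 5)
Your overall route is exactly the paper's intended one: the paper derives the corollary from Lemma \ref{5.6} precisely by the observation that a face of combinatorial type $\Sigma^k$ (two vertices joined by $k$ parallel edges, which you identify correctly) forces biangles, so in outline your argument and the paper's ``immediate'' argument coincide. The one step that does not stand as written is your appeal to the ``standard fact'' that any two edges at a vertex of a GKM$_3$ graph span a unique two-dimensional face. For \emph{abstract} GKM$_3$ graphs --- and the corollary is stated for abstract signed graphs --- the existence of such a $2$-face is not automatic: the paper asserts existence and uniqueness of $2$-faces only for geometric GKM$_3$ graphs (and for the graphs it constructs from geometric odd GKM graphs), and for a general abstract $(\Gamma,\alpha,\nabla)$ the edge-sliding process need not close up into a $2$-valent $\nabla$-invariant subgraph. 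Fortunately, inside your maximal face $F$ of type $\Sigma^k$ you do not need that general fact. Since $F$ has only the two vertices $p,q$, $\nabla$-invariance of $F$ forces $\nabla_e f$ to be the reverse of one of the $k$ edges of $F$; it cannot be $\bar e$, because $\nabla_e$ is injective and $\nabla_e e=\bar e$, and it cannot be the reverse of a third edge $g$ of $F$ (relevant only when $k\geq 3$), because the compatibility relation $\alpha(\nabla_e f)=\alpha(f)+c_{e,f}\,\alpha(e)$ would then produce a nontrivial linear relation among the weights of the three distinct edges $e,f,g$, violating the GKM$_3$ condition. Hence $\nabla_e f=\bar f$, so $\{e,f\}$ is itself a $2$-valent $\nabla$-invariant subgraph, i.e.\ a biangle, and Lemma \ref{5.6} gives the contradiction. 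With this substitution your proof is complete and agrees with the paper's.

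A smaller point: your dismissal of $k\leq 1$ is not really an argument, and the claim that $\Sigma^1$ cannot occur as a maximal simplex is neither needed nor clearly true in the abstract setting; the statement only has content for $k\geq 2$, since $\Sigma^1$ has the combinatorial type of the simplex $\Delta^1$ (the graph of $\ccc P^1$), which the corollary does not --- and should not --- exclude. It is cleaner simply to restrict attention to $k\geq 2$, which is also how the result is used in the proof of Theorem \ref{AC}.
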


Before we prove Theorem \ref{AC},  we first recall the definition of a Bott manifold.
\begin{definition}[{\bf Generalized Bott manifold}] We say that a manifold $X$ is a {\em generalized Bott manifold} if it is the total space of an iterated $\ccc P^{n_i}$-bundle
$$X =X_k\rightarrow X_{k-1}\rightarrow\cdots\rightarrow X_1\rightarrow X_0=\{pt\},$$
where each $X_i$ is the total space of the projectivization of a Whitney sum of $n_i + 1$ complex line bundles over $X_{i-1}$. 
\end{definition}
\begin{remark} Torus manifolds over $\prod \Delta^{n_i}$, where $\Delta^{n_i}$ denotes the standard simplex of dimension $n_i$, admitting an invariant almost complex
structure were classified in Choi, Masuda, and Suh \cite{CMS}. They are all diffeomorphic to the so-called generalized Bott manifolds. 
\end{remark}
We are now ready to prove Theorem \ref{AC}. 
\begin{proof}[Proof of Theorem \ref{AC}]

Let $M^{2n+1}$ be a closed, non-negatively curved, odd GKM$_4$ manifold which admits an invariant almost contact structure that is alternating. By assumption, $\bar{\Gamma}_M$ is alternating and so the abstract GKM graph obtained from it is signed.
By Corollary \ref{cor5.6}, a signed, abstract GKM$_3$ graph has no maximal simplices 
with the combinatorial type of $\Sigma^k$.
Since GKM$_4$ manifolds are also GKM$_3$, it follows that $\Gamma$ contains no maximal simplices with the combinatorial type of $\Sigma^k$.

We can now argue as in Section 7 of \cite{GW2}  to obtain the result.
We briefly outline the proof here for the sake of completeness.  As noted at the end of Section \ref{5.2}, non-negative curvature and the GKM$_4$ condition guarantee that $\Gamma$ will have small three-dimensional faces.  So we may apply Theorem 3.11 of \cite{GW2} to show that $\Gamma$ is finitely covered by a graph, $\tilde{\Gamma}$, which is   the vertex-edge graph of a finite product of
simplices.   One then  shows that the quasitoric manifold corresponding to the graph $\tilde{\Gamma}$ admits an invariant complex structure  in Theorem 7.1 of \cite{GW2}.  Applying Theorem 6.4 of \cite{CMS}, then shows us that $\tilde{\Gamma}$ is the GKM graph of a generalized Bott manifold. Finally, we use Theorem 
7.5 of \cite{GW2}, 
to show that $\tilde{\Gamma}=\Gamma$. Thus, we may apply the GKM theorem to conclude that
the rational cohomology ring of $M$ is the tensor product of the rational cohomology ring of an odd-dimensional sphere and the rational cohomology ring of a generalized Bott manifold, 
as desired.
\end{proof}

It seems very likely that the graphs corresponding to non-negatively curved, odd GKM$_3$ manifolds admitting an invariant almost contact structure are alternating. We finish with the following conjecture.
\begin{conjecture} Let $M^{2n+1}$ be a closed,  non-negatively curved odd-dimensional GKM$_3$ manifold admitting an invariant almost contact structure. Then the odd GKM$_3$ graph corresponding to $M^{2n+1}$ is alternating.
\end{conjecture}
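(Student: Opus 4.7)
Since every square in $\bar{\Gamma}_M$ has valence at most two by Proposition \ref{valence}, and the alternating condition is vacuous at valence-one squares (a single nonzero weight cannot sum to zero), it suffices to verify the vanishing of signed weight sums at valence-two squares. This interpretation is also the one implicit in the proof of Theorem \ref{AC}, where only grounded edges contribute to the signed even-dimensional GKM graph obtained from $\bar{\Gamma}_M$.

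Fix a valence-two square $s$ with associated $3$-dimensional $T$-invariant submanifold $N_s \subset M$ fixed by the codimension-one subtorus $H_s \subset T$ with Lie algebra $\ker \bar{\alpha}_M(s)$. By Lemma \ref{lem:fixedalmostcontact} the almost contact structure $(\phi, \xi, \eta)$ restricts to $N_s$, and $T/H_s \cong S^1$ acts on $N_s$ preserving this structure, fixing exactly two circles $c_1, c_2$. Orientability of $N_s$ (fixed sets of compact group actions on orientable manifolds are orientable), Proposition \ref{ef}, and Theorem \ref{GG}(2) together identify $N_s$ with $S^2 \times S^1$, the $T/H_s$-action being rotation of the $S^2$-factor. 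The signed weights $\alpha_i$ on the two edges from $s$ to $c_i$ are the complex $T$-weights on the normal bundles $\nu_i$ of $c_i$ in $N_s$, with complex structure coming from $\phi$. Since $\xi$ is tangent to each $c_i$, the fibre of $\nu_i$ at $p_i \in c_i$ is naturally identified with $V_i := T_{p_i}S^2$; the underlying real $S^1$-representation on $V_i$ has weight $\pm w$ with respect to the $S^2$-orientation (specifically $+w$ at the north pole and $-w$ at the south pole). Writing $\alpha_i = \epsilon_i \cdot w_i^{\mathrm{nat}}$ with $\epsilon_i \in \{\pm 1\}$ measuring whether the $\phi$-complex structure on $V_i$ agrees with or is conjugate to the natural complex structure coming from $TS^2$, the desired equality $\alpha_1 + \alpha_2 = 0$ reduces to the claim $\epsilon_1 = \epsilon_2$.

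The key observation for $\epsilon_1 = \epsilon_2$ is that $\phi$ makes the rank-two distribution $L := \ker \eta \cap TN_s$ into a globally defined complex line bundle on the connected manifold $N_s$, so its induced orientation on each fibre is determined by a single continuous orientation choice on $L$. Comparing this orientation with the natural one on the pullback bundle $\mathrm{pr}_1^* TS^2$ (where $\mathrm{pr}_1 : S^2 \times S^1 \to S^2$) yields a locally constant, hence constant, sign $\epsilon$ on $N_s$, forcing $\epsilon_1 = \epsilon_2$.

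The main obstacle is making this comparison rigorous: $L$ and $\mathrm{pr}_1^*TS^2$ coincide as subbundles of $TN_s$ only over the fixed circles (where $\xi$ automatically aligns with $\partial/\partial s$), and generically differ elsewhere, so an orientation comparison cannot be made fiberwise in a naive way. To overcome this I would apply Atiyah-Bott localization to the equivariant first Chern class $c_1^{T/H_s}(L)$ integrated over the $T/H_s$-invariant 2-sphere $S^2 \times \{t_0\} \subset N_s$, yielding $\alpha_1 - \alpha_2 = w \cdot c_1(L|_{S^2 \times \{t_0\}})$; the stable isomorphism $L \oplus \underline{\mathbb{R}} \cong TN_s \cong \mathrm{pr}_1^*TS^2 \oplus \underline{\mathbb{R}}$ induced by the nonvanishing sections $\xi$ and $\partial/\partial s$ allows one to transport orientation information from $\mathrm{pr}_1^*TS^2$ to $L$ and pin down the sign of $c_1(L|_{S^2\times\{t_0\}}) = \pm 2$. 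Combined with $|\alpha_i|=|w|$, this forces $\alpha_1 = -\alpha_2$, completing the argument.
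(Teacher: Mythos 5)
First, a point of comparison: the paper does not prove this statement at all --- it is posed there as an open conjecture --- so your argument has to stand entirely on its own. Its skeleton is fine: the reduction to valence-two squares (Proposition \ref{valence}; and your reading that the sum condition constrains only such squares is surely the intended one, since otherwise already $S^{2n+1}$ with its standard contact structure would fail at its floating edges), the restriction of the structure to $N_s$ (Lemma \ref{lem:fixedalmostcontact}), and the identification $N_s\cong S^2\times S^1$ (Theorem \ref{GG}) are all sound. The decisive step, however, is not. Connectedness of $N_s$ only gives a single consistent orientation of the line bundle $L=\ker\eta\cap TN_s$; it says nothing about how that orientation compares with the orientation of the $S^2$-factor at the two fixed circles, and that comparison is exactly what can go wrong when $\xi$ points in opposite $S^1$-directions along the two circles. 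Your fallback is correct as an identity ($\alpha_1-\alpha_2=w\,c_1(L|_{S^2\times\{t_0\}})$), but the stable isomorphism $L\oplus\underline{\mathbb{R}}\cong \mathrm{pr}_1^*TS^2\oplus\underline{\mathbb{R}}$ cannot ``pin down the sign of $c_1=\pm 2$'': over $S^2$ an oriented rank-two bundle is remembered by its stabilization only modulo $2$ (as $\pi_1(SO(3))=\mathbb{Z}/2$), so this argument cannot exclude $c_1(L|_{S^2\times\{t_0\}})=0$, which is precisely the non-alternating case.

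Moreover, the gap cannot be repaired by any argument using only the induced invariant almost contact structure on $N_s$, because the local statement you are trying to prove is false. On $S^2\times S^1$ with the rotation action, take $\xi=z\,\partial_t+\nabla z$ (with $z$ the height function, so $\xi$ is invariant and nowhere zero), let $L=\xi^{\perp}$ for the product metric, orient $L$ globally, and let $\phi$ be the quarter-turn on $L$ with $\phi(\xi)=0$ and $\eta$ dual to $\xi$; this is an invariant almost contact structure. Along a meridian $\gamma(s)$ from north to south, $(\sin s\,\partial_t+\cos s\,\gamma'(s),\,e_y)$ is a continuous frame of $L$ that equals $(e_x,e_y)$ at both poles, so $\phi(e_x)=e_y$ at both fixed circles and the two signed weights are equal, not opposite: the tilting of $\xi$ cancels the usual orientation flip of $TS^2$ between the poles on which your argument tacitly relies (and indeed $c_1(L|_{S^2\times\{t_0\}})=0$ here). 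This phenomenon even globalizes: on $M=(S^2)^n\times S^1$, $n\geq 3$, with the standard isometric $T^n$-action (a closed, non-negatively curved, equivariantly formal odd GKM$_3$ manifold), the same $\xi$ built from the first sphere, with the quarter-turn on $\xi^{\perp}\cap(TS^2_1\oplus\mathbb{R}\,\partial_t)$ and the standard complex structures on the remaining $TS^2_i$, is a $T^n$-invariant almost contact structure whose signed graph has equal weights at the two edges of every square coming from the first sphere. So under the natural reading of ``alternating'' the conjecture seems to require an additional hypothesis constraining $\xi$ (some compatibility, as in the alternating assumption of Theorem \ref{AC}) rather than a cleverer proof; in any case, a correct argument must use more than the data on a single $N_s$.
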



\end{document}